\numberwithin{equation}{section}
\newtheorem{theoremcounter}{theoremcounter}[section]
\newtheorem{corollary}[theoremcounter]{Corollary}
\newtheorem{definition}[theoremcounter]{Definition}
\newtheorem{lemma}[theoremcounter]{Lemma}
\newtheorem{proposition}[theoremcounter]{Proposition}
\newtheorem{remark}[theoremcounter]{Remark}
\newtheorem{theorem}[theoremcounter]{Theorem}
\newcommand{\nbd}{\nobreakdash-\hspace{0pt}}
\newcommand{\cal}{\ensuremath{\mathcal}}
\newcommand{\bboard}{\ensuremath{\mathbb}}
\newcommand{\cE}{\ensuremath{\cal{E}}}
\newcommand{\cF}{\ensuremath{\cal{F}}}
\newcommand{\cM}{\ensuremath{\cal{M}}}
\newcommand{\cP}{\ensuremath{\cal{P}}}
\newcommand{\cR}{\ensuremath{\cal{R}}}
\newcommand{\cS}{\ensuremath{\cal{S}}}
\newcommand{\bbF}{\ensuremath{\bboard F}}
\newcommand{\bbL}{\ensuremath{\bboard L}}
\newcommand{\rmF}{\ensuremath{\mathrm{F}}}
\newcommand{\rmJ}{\ensuremath{\mathrm{J}}}
\newcommand{\rmM}{\ensuremath{\mathrm{M}}}
\newcommand{\rmt}{\ensuremath{\mathrm{t}}}
\newcommand{\amid}{\ensuremath{\mathop{\mid}}}
\newcommand{\NN}{\ensuremath{\mathbb{N}}}
\newcommand{\ZZ}{\ensuremath{\mathbb{Z}}}
\newcommand{\QQ}{\ensuremath{\mathbb{Q}}}
\newcommand{\CC}{\ensuremath{\mathbb{C}}}
\newcommand{\isdiv}{\amid}
\newcommand{\nisdiv}{\ensuremath{\mathop{\nmid}}}
\renewcommand{\pmod}[1]{\ensuremath{\;(\mathrm{mod}\, #1)}}
\newcommand{\ra}{\ensuremath{\rightarrow}}
\newcommand{\GL}[1]{\ensuremath{\mathrm{GL}_{#1}}}
\newcommand{\SL}[1]{\ensuremath{\mathrm{SL}_{#1}}}
\newcommand{\Sp}[1]{\ensuremath{\mathrm{Sp}_{#1}}}
\newcommand{\rT}{\ensuremath{{}^\rmt\hspace{-0.08em}}}
\newcommand{\tr}{\ensuremath{\mathrm{tr}}}
\newcommand{\rk}{\ensuremath{\mathop{\mathrm{rk}}}}
\newcommand{\slashdiv}{\ensuremath{\mathop{/}}}
\newcommand{\HS}{\mathbb{H}}
\newcommand{\wtd}{\widetilde}
\newcommand{\ov}{\overline}
\newcommand{\HH}{\ensuremath{\mathbb{H}}}
\begin{document}

\title[The structure of Siegel modular forms modulo $p$ and $U(p)$ congruences]{The structure of Siegel modular forms modulo $p$\\ and $U(p)$ congruences}

\author{Martin Raum}
\address{ETH Zurich, Mathematics Dept., CH-8092, Zürich, Switzerland}
\email{martin.raum@math.ethz.ch}
\urladdr{http://www.raum-brothers.eu/martin/}

\author{Olav K. Richter}
\address{Department of Mathematics\\University of North Texas\\ Denton, TX 76203\\USA}
\email{richter@unt.edu}

\thanks{The author was partially supported by the ETH Zurich Postdoctoral Fellowship Program and by the Marie Curie Actions for People COFUND Program.  The second author was partially supported by Simons Foundation Grant $\#200765$}

\subjclass[2010]{Primary 11F33, 11F46; Secondary 11F50}
\keywords{Siegel modular forms mod~$p$, theta cycles and $U(p)$ congruences, Jacobi forms mod~$p$}


\begin{abstract}
We determine the ring structure of Siegel modular forms of degree $g$ modulo a prime $p$, extending Nagaoka's result in the case of degree~$g=2$.  We characterize $U(p)$ congruences of Jacobi forms and Siegel modular forms, and surprisingly find different behaviors of Siegel modular forms of even and odd degrees.
\end{abstract}
\maketitle

\section{Introduction and statement of main results}
\label{sec:introduction}


Serre~\cite{Serre-p-adic} and Swinnerton-Dyer's~\cite{SwD-l-adic} theory of modular forms modulo a prime $p$ has impacted different research areas.  In particular, it has been the basis of beautiful results on congruences of Fourier series coefficients of modular forms.  Ono \cite{Ono} gives a good overview of the subject, and he highlights several applications of congruences that involve Atkin's $U$-operator (see also Ahlgren and Ono~\cite{Ahl-Ono-Comp05}, Elkies, Ono, and Yang~\cite{E-O-Y-IMRN05}, and Guerzhoy~\cite{Gue-PAMS2007}).  Siegel modular forms modulo a prime~$p$ have conjectural connections to special values of $L$-functions (as predicted by Harder~\cite{Harder-1-2-3} for degree~$2$ and Katsurada~\cite{Kats-MathZ08} for general degree), and one wishes for a better understanding of such Siegel modular forms.  Nagaoka~\cite{Na-MathZ00, Na-MathZ05} establishes the ring structure of Siegel modular forms of degree $2$ modulo a prime~$p$.  B\"ocherer and Nagaoka~\cite{Bo-Na-MathAnn07, Bo-Na-Abh10} and Ichikawa~\cite{Ichi-MathAnn08} provide tools to study the ring of Siegel modular forms of degree $g$ modulo a prime $p$, but the structure of that ring has not been determined if $g>2$.  In this paper, we fill this gap by providing a structure theorem for the ring of Siegel modular forms of arbitrary degree modulo a prime $p$.  Let us introduce necessary notation to state our result.

Throughout, $k, g \ge 1$ are integers, and $p \geq 5$ is a prime.  We write $\bbF_p$ for the field~$\ZZ \slashdiv p \ZZ$ and $\ZZ_{(p)}$ for the localization of $\ZZ$ at the principal ideal~$(p)$, and we call it the ring of $p$-integral rationals.  Let $\rmM^{(g)}_k (\ZZ_{(p)})$ be the vector space of Siegel modular forms of degree~$g$, weight~$k$, and with $p$-integral rational coefficients.  Let $\Phi_1, \ldots, \Phi_n$ be generators of $\rmM^{(g)}_\bullet(\ZZ_{(p)})$, and consider the abstract isomorphism
\begin{gather}
\label{eq:siegel-reduction-isomorphism}
  \rmM^{(g)}_\bullet(\ZZ_{(p)})
\cong
  \ZZ_{(p)}[x_1, \ldots, x_n] \slashdiv C
\text{,}
\qquad
  \Phi_i \mapsfrom x_i
\text{,}
\end{gather}
where $C \subset \ZZ_{(p)}[x_1, \ldots, x_n]$ is an ideal.  The right hand side of~\eqref{eq:siegel-reduction-isomorphism} carries a $\ZZ$-grading which is induced by the weight grading on the left hand side.  This grading is inherited by
\begin{gather*}
  \bbF_p[x_1, \ldots, x_n] \slashdiv C
=
  \big( \ZZ_{(p)}[x_1, \ldots, x_n] \slashdiv C \big) \otimes \bbF_p
\text{.}
\end{gather*}
We always write $B \in \ZZ_{(p)}[x_1, \ldots, x_n] \slashdiv C$ for the polynomial that corresponds under the isomorphism~\eqref{eq:siegel-reduction-isomorphism} to the Siegel modular form $\Psi_{p-1} \equiv 1 \pmod{p}$ constructed by B\"ocherer and Nagaoka~\cite{Bo-Na-MathAnn07}.
Our first main theorem extends the known ring structures of Siegel modular forms modulo $p$ of degree $g=1$ (\cite{SwD-l-adic}) and $g=2$ (\cite{Na-MathZ00}) to arbitrary degree $g$.

\begin{theorem}
\label{thm:structure-of-siegel-mod-p-rings}
Let $p \ge g + 3$.  We have
\begin{gather*}
  \rmM^{(g)}_\bullet(\bbF_p)
\cong
  \big ( \rmM^{(g)}_\bullet(\ZZ_{(p)}) \otimes \bbF_p \big) \slashdiv \langle 1 - \Psi_{p-1}\rangle
\cong
  \bbF_p [x_1, \ldots, x_n] \slashdiv \big(C + \langle 1 - B\rangle \big)
\text{,}
\end{gather*}
where $\langle 1 - B\rangle$ and $\langle 1 - \Psi_{p - 1}\rangle$ are the principal ideals generated by $1 - B$ and $1 - \Psi_{p -1}$, respectively.
\end{theorem}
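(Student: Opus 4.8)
The plan is to reduce the whole statement to a single inclusion about one reduction map. Write $S := \rmM^{(g)}_\bullet(\ZZ_{(p)}) \otimes \bbF_p$ and $T := \rmM^{(g)}_\bullet(\bbF_p)$; under~\eqref{eq:siegel-reduction-isomorphism} one has $S \cong \bbF_p[x_1,\ldots,x_n]\slashdiv C$ and the image of $\Psi_{p-1}$ in $S$ corresponds to $B$. Reduction of Fourier coefficients is a surjective ring homomorphism $\rho\colon S \to T$ (surjective by the very definition of $\rmM^{(g)}_\bullet(\bbF_p)$), and $\rho(\Psi_{p-1}) = 1$ because $\Psi_{p-1} \equiv 1 \pmod{p}$ by~\cite{Bo-Na-MathAnn07}; hence $\langle 1 - \Psi_{p-1}\rangle \subseteq \ker\rho$. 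Since $S\slashdiv\langle 1 - \Psi_{p-1}\rangle \cong \bbF_p[x_1,\ldots,x_n]\slashdiv\big(C + \langle 1 - B\rangle\big)$ tautologically, proving the theorem amounts to the reverse inclusion $\ker\rho \subseteq \langle 1 - \Psi_{p-1}\rangle$, i.e.\ that $\rho$ induces an isomorphism $S\slashdiv\langle 1 - \Psi_{p-1}\rangle \cong T$.

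I would base the proof on two inputs. The first is elementary and asserts that $\ker\rho$ contains no nonzero homogeneous element: if $f \in \rmM^{(g)}_k(\ZZ_{(p)})$ has all Fourier coefficients divisible by $p$, then $\tfrac1p f$ again lies in $\rmM^{(g)}_k(\ZZ_{(p)})$, so $f \in p\,\rmM^{(g)}_k(\ZZ_{(p)})$ and $f$ maps to $0$ in $S_k$; thus $\rho$ restricts to an isomorphism of $S_k$ onto its image for each weight $k$. The second, which is the heart of the matter, is that the weight of a nonzero mod $p$ Siegel modular form of degree $g$ is well defined modulo $p-1$. Concretely I would use it in the form: if $f_1,\ldots,f_r$ are $p$-integral Siegel modular forms of degree $g$ of weights $k_1,\ldots,k_r$ and $\sum_i f_i \equiv 0 \pmod{p}$, then $\sum_{k_i \equiv j} f_i \equiv 0 \pmod{p}$ for every residue $j$ modulo $p-1$; equivalently, $T = \bigoplus_{j \bmod (p-1)} T^{(j)}$ with $T^{(j)} := \rho\big(\bigoplus_{k\equiv j} S_k\big)$. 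I would obtain this from the description of mod $p$ Siegel modular forms as functions on principally polarized abelian $g$-folds in characteristic $p$ equipped with a trivialization of the Hodge bundle---for which the weight enters only via the character $t \mapsto t^{k}$ of the scalar torus $\mathbb{G}_m$, and $t^{k}$ over $\bbF_p^{\times}$ sees $k$ only modulo $p-1$---together with the $q$-expansion principle; this is where the hypothesis $p \ge g+3$ is used (it is also what is required to build $\Psi_{p-1}$), and I would invoke~\cite{Bo-Na-MathAnn07, Ichi-MathAnn08} for the comparison.

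Granting the two inputs, the conclusion follows by bookkeeping with the weight grading. Let $F \in \ker\rho$ and decompose $F = \sum_j G_j$, where $G_j$ is the sum of the homogeneous components of $F$ of weights $\equiv j \pmod{p-1}$. By the second input $\rho(G_j) \in T^{(j)}$ and $\sum_j \rho(G_j) = 0$, so directness of the decomposition forces each $\rho(G_j) = 0$; it therefore suffices to show that each $G_j$ lies in $\langle 1 - \Psi_{p-1}\rangle$. So assume $F = \sum_i F_{k_i}$ with $0 \ne F_{k_i} \in S_{k_i}$ and $k_1 < \cdots < k_r$ all congruent modulo $p-1$. Replacing each $F_{k_i}$ by $F_{k_i}\,\Psi_{p-1}^{(k_r - k_i)/(p-1)}$ changes $F$ only by an element of $\langle 1 - \Psi_{p-1}\rangle$, since $1 - \Psi_{p-1}^{m} \in \langle 1 - \Psi_{p-1}\rangle$ for all $m \ge 0$; the resulting element $\widetilde{F}$ is homogeneous of weight $k_r$ and $\rho(\widetilde{F}) = \sum_i \rho(F_{k_i}) = \rho(F) = 0$ because $\rho(\Psi_{p-1}) = 1$, whence $\widetilde{F} = 0$ by the first input. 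Therefore $F = F - \widetilde{F} \in \langle 1 - \Psi_{p-1}\rangle$, as desired.

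The only genuinely hard step is the second input: showing that for arbitrary degree $g$ and $p \ge g+3$ a congruence between reductions of $p$-integral Siegel modular forms respects the weight modulo $p-1$, equivalently the direct sum decomposition $T = \bigoplus_j T^{(j)}$. Everything else---surjectivity of $\rho$, injectivity of $\rho$ on each graded piece, and the manipulation with powers of $\Psi_{p-1}$---is routine, and it is essentially the formal argument behind the known cases $g = 1$~\cite{SwD-l-adic} and $g = 2$~\cite{Na-MathZ00}.
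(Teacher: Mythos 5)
Your proposal is correct in its formal structure and that structure coincides with the paper's: your ``first input'' (injectivity of $\rho$ on each graded piece, via $p$\nbd saturation of $\rmM^{(g)}_k(\ZZ_{(p)})$ and of the ideal $C$) is Proposition~\ref{prop:structure-of-siegel-saturation-of-ideal}, your ``second input'' (congruences respect the weight modulo $p-1$, for arbitrary finite families) is Proposition~\ref{prop:siegel-weight-congruence-homogeous-components}, and your homogenization by multiplying the components by powers of $\Psi_{p-1}$ is equivalent to the paper's iterative replacement $P \mapsto P - (1-B)P_k$. The genuine divergence is in how the second input is established, and you are right that this is where all the difficulty sits. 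The paper proves it by passing to Fourier--Jacobi coefficients and developing the theory of Jacobi forms on $\HS\times\CC^l$ modulo $p$ (Propositions~\ref{prop:improved-Sofer}--\ref{prop:jacobi_weight_congruence}), ultimately reducing via restriction maps $\phi\mapsto\phi[s]$ to the one-variable case of Sofer and then to Swinnerton-Dyer; the introduction explicitly flags this Jacobi-form development as the reason the proof is not a routine generalization of the $g=2$ case. You instead propose to import the $\ZZ\slashdiv(p-1)\ZZ$\nbd grading from the geometric description of mod~$p$ Siegel modular forms together with the $q$\nbd expansion principle, citing \cite{Bo-Na-MathAnn07, Ichi-MathAnn08}. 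That route is plausible and, if available, cleaner, but note that the paper uses Ichikawa's result only for the two-form special case (Corollary~\ref{cor:siegel-weight-congruence}); the full direct-sum statement for families is exactly what the paper regards as needing proof. So to make your argument self-contained you would have to verify that the geometric comparison (including the identification of geometric mod~$p$ forms with reductions of $p$\nbd integral analytic forms for $p \ge g+3$) really yields the decomposition $T = \bigoplus_j T^{(j)}$ at the level of Fourier expansions, rather than leaving it as an invocation; as written, the one nontrivial step of the theorem is deferred to the literature in a form stronger than the paper itself extracts from it.
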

\noindent The proof of Theorem~\ref{thm:structure-of-siegel-mod-p-rings} is not a mere generalization of the results in \cite{Na-MathZ00}, and relies on first developing properties of Jacobi forms of higher degree modulo $p$.

There are congruences between Siegel modular forms of different weights, and it is desirable to determine the smallest weight in which such a congruence takes place.  This leads to the notion of the filtration~$\omega(\,\cdot\,)$ of Siegel modular forms (for details, see Section~\ref{sec:structureofmodularformsmodp}), which is described in the following corollary.
\begin{corollary}
\label{cor:siegel-characterization-of-maximal-filtration}
Let $p \ge g + 3$.  If $\Phi \in \rmM^{(g)}_k(\ZZ_{(p)})$ with corresponding polynomial~$A$ under~\eqref{eq:siegel-reduction-isomorphism}, then $\omega(\Phi) = k$ if and only if $A$ is not divisible by $B$.
\end{corollary}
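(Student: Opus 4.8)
The plan is to use Theorem~\ref{thm:structure-of-siegel-mod-p-rings} to convert the statement $\omega(\Phi)=k$ into a divisibility statement inside the graded ring $R:=\rmM^{(g)}_\bullet(\ZZ_{(p)})\otimes\bbF_p\cong\bbF_p[x_1,\ldots,x_n]\slashdiv C$. Under that theorem the reduction map $\rmM^{(g)}_\bullet(\ZZ_{(p)})\to\rmM^{(g)}_\bullet(\bbF_p)$ factors as $\rmM^{(g)}_\bullet(\ZZ_{(p)})\to R\to R\slashdiv\langle 1-B\rangle$, so that two $p$-integral forms $\Phi,\Psi$ are congruent modulo $p$ if and only if their images $A,A_\Psi\in R$ satisfy $A-A_\Psi\in\langle 1-B\rangle$. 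Set $d:=p-1$; recall $B\in R$ is homogeneous of degree $d$ and that $\Psi_{p-1}\equiv 1\pmod p$. Throughout, ``divisible by $B$'' is understood inside $R$, i.e.\ after reducing $A$ modulo $p$; since $B$ is homogeneous, $B$ divides $A$ there precisely when $A=BA'$ for some homogeneous $A'\in R$ of degree $k-d$. Finally $\omega(\Phi)\le k$ always, so it suffices to prove $\omega(\Phi)<k$ if and only if $B\mid A$, and we may assume $\Phi\not\equiv 0\pmod p$ (the case $\Phi\equiv 0$ being trivial, as then $A=0=B\cdot 0$ and $\omega(\Phi)\ne k$).

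The one structural input I would isolate first is that $B$ is a non-zero-divisor in $R$: if $f\in\rmM^{(g)}_\bullet(\ZZ_{(p)})$ satisfies $f\Psi_{p-1}\equiv 0\pmod p$, then $f\equiv f\Psi_{p-1}\equiv 0\pmod p$ because $\Psi_{p-1}\equiv 1$, so the class of $f$ in $R$ vanishes. Comparing homogeneous components then yields $\langle 1-B\rangle\cap R_m=0$ for every $m$: if $(1-B)h$ is homogeneous of degree $m$, then in every degree $j\ne m$ one reads off $h_j=Bh_{j-d}$ (with $h_j=0$ for $j<0$); telescoping this downward along the residue class of $m$ forces $h_{m-d}=0$, while telescoping upward — $h$ being a polynomial — forces $B^N h_m=0$ for large $N$, hence $h_m=0$ since $B$ is a non-zero-divisor, so $(1-B)h=h_m-Bh_{m-d}=0$.

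For the implication $B\mid A\Rightarrow\omega(\Phi)<k$: I would write $A=BA'$ with $A'$ homogeneous of degree $k-d$, lift $A'$ to some $\Psi\in\rmM^{(g)}_{k-d}(\ZZ_{(p)})$ (the reduction $\rmM^{(g)}_{k-d}(\ZZ_{(p)})\twoheadrightarrow R_{k-d}$ is surjective), and note that $\Psi_{p-1}\Psi$ and $\Phi$ have the same image $BA'=A$ in $R$, so $\Phi\equiv\Psi_{p-1}\Psi\equiv\Psi\pmod p$ with $\Psi$ of weight $k-d<k$; hence $\omega(\Phi)\le k-d<k$. For the converse, suppose $\omega(\Phi)=k'<k$ and pick $\Psi\in\rmM^{(g)}_{k'}(\ZZ_{(p)})$ with $\Phi\equiv\Psi\pmod p$, so $A-A_\Psi\in\langle 1-B\rangle$. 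If $k\not\equiv k'\pmod d$, then running the homogeneous-component computation of the previous paragraph in the residue class of $k$ (which does not contain $k'$) gives $A=h_k$ and $B^N A=0$ for large $N$, so $A=0$, contradicting $\Phi\not\equiv 0\pmod p$. Hence $k=k'+md$ with $m\ge 1$, and since $B^m\equiv 1\pmod{\langle 1-B\rangle}$ we get $A\equiv B^m A_\Psi\pmod{\langle 1-B\rangle}$ with both sides homogeneous of degree $k$; by $\langle 1-B\rangle\cap R_k=0$ this forces $A=B^m A_\Psi$, i.e.\ $B\mid A$.

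The main obstacle is essentially the only non-formal point: the ideal $\langle 1-B\rangle$ is inhomogeneous, so divisibility cannot be detected grade-by-grade for free, and everything rests on $B$ being a non-zero-divisor in $R$ — this is what makes the telescoping of the relations $h_j=Bh_{j-d}$ terminate and, in particular, forces the weights of two congruent forms to differ by a multiple of $p-1$. The remaining ingredients — surjectivity of the reduction in each weight, $\Psi_{p-1}\equiv 1\pmod p$, the identification $\ker\big(\rmM^{(g)}_\bullet(\ZZ_{(p)})\to R\big)=p\,\rmM^{(g)}_\bullet(\ZZ_{(p)})$ (so that a $p$-integral form with all Fourier coefficients divisible by $p$ lies in $p\,\rmM^{(g)}_\bullet(\ZZ_{(p)})$), and the definition of $\omega(\,\cdot\,)$ from Section~\ref{sec:structureofmodularformsmodp} — are standard and already in place.
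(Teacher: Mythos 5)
Your proposal is correct and follows essentially the same route as the paper: both directions reduce, via Theorem~\ref{thm:structure-of-siegel-mod-p-rings}, to writing $A - A'$ as $(1-B)P$ and comparing homogeneous components to extract $A = B P_{k-(p-1)}$, with the converse handled by factoring out $\Psi_{p-1}$. The only substantive difference is that you make explicit what the paper leaves implicit --- that $B$ is a non-zero-divisor in $R$ (via $\Psi_{p-1}\equiv 1 \pmod{p}$), which is exactly what terminates the upward telescoping and kills the stray top component of $P$.
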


Now we turn the attention to another main point of this paper.  Tate's theory of theta cycles (see $\S7$ of \cite{Joch-1982}) yields a criterion for the existence of $U(p)$ congruences of modular forms.  That concept has been applied to Jacobi forms (\cite{crit, heat}) and to Siegel modular forms of degree $2$ (\cite{C-C-R-Siegel}).  In this paper, we extend \cite{C-C-R-Siegel} to the case of general degree $g$.  Specifically, we explore theta cycles of Siegel modular forms of degree $g$, which allows us to determine conditions on the existence of $U(p)$ congruences of such Siegel modular forms.  We state our result after introducing more notation.

For a variable $Z=(z_{ij})$  in the Siegel upper half space of degree $g$ set $\partial_Z:=\left(\frac{1}{2}(1+\delta_{ij})\frac{\partial}{\partial z_{ij}}\right)$.  The generalized theta operator
\begin{gather*}
\label{Theta^g}
\mathbb{D}:=(2\pi i)^{-g}\det \partial_Z
\text{}
\end{gather*}
acts on Fourier series expansions of Siegel modular forms as follows:
\begin{gather*}
  \mathbb{D}\bigg(\sum_{T = \rT T \:\ge\: 0 }\!\!
  c(T)\, e^{2\pi i\,\tr(TZ)}\bigg)
=
  \sum_{T = \rT T \:\geq\: 0 }\!\!
  \det(T) c(T)\, e^{2\pi i\,\tr(TZ)}
\text{,}
\end{gather*}
where $\tr$ denotes the trace, $\rT T$ is the transpose of $T$, and where the sum is over all symmetric, semi-positive definite, and half-integral $g\times g$ matrices with integral diagonal entries.  Moreover, the analog of Atkin's $U$-operator for Siegel modular forms is defined as follows:
\begin{gather*}
  \bigg(
  \sum_{T = \rT T \:\geq\: 0 }\!\!
  c(T)e^{2\pi i\,\tr(TZ)}
  \bigg)\, \bigg| \, U(p)
:=
  \sum_{\substack{T = \rT T \:\geq\: 0 \\ p \isdiv \det T}}\!\!
  c(T) \, e^{2\pi i\,\tr(TZ)}
\text{.}
\end{gather*}
Our second main theorem gives a criterion for the existence of $U(p)$ congruences of Siegel modular forms of degree $g$.
\begin{theorem}
\label{thm:Siegel-U_p}
Let $p \ge \max(k,g + 3)$.  Suppose that $\Phi\in\rmM^{(g)}_k (\ZZ_{(p)})$ has a Fourier-Jacobi coefficient $\Phi_M$ (for details, see Sections~\ref{sec:Siegelpreliminaries}) such that $p\,\nmid \,\det(2M)$ and $\Phi_M\not\equiv 0\pmod{p}$.
\begin{enumerate}[a)]
\item 
Let $g$ be odd.  Then
\begin{gather*}
  \omega\Big(\mathbb{D}^{p+1-k+\frac{g-1}{2}}(\Phi)\Big)
=
  \begin{cases}
    2p+1+g-k\text{,} & \text{if $\Phi \big|\, U(p) \not\equiv 0 \pmod{p}$;}
    \\
    \hphantom{2}p+2+g-k\text{,} & \text{if $\Phi \big|\, U(p) \equiv 0 \pmod{p}$.}
  \end{cases}
\end{gather*}

\item
Let $g$ be even and  $k > \frac{g + 4}{2}$.
\begin{enumerate}[i)]
\item If $p > 2k-g-3$, then $\Phi \, \big | \, U(p) \not\equiv 0 \pmod{p}$.
\item 
   If $p < 2k - g - 3$, then
\begin{gather*}
  \omega\Big(\mathbb{D}^{\frac{3p+g+1}{2}-k}(\Phi)\Big)
=
  \begin{cases}
    3p+1+g-k\text{,} & \text{if $\Phi \big|\, U(p) \not\equiv 0 \pmod{p}$;}
    \\
    2p+2+g-k\text{,} & \text{if $\Phi \big|\, U(p) \equiv 0 \pmod{p}$.}
  \end{cases}
\end{gather*}
\end{enumerate}
\end{enumerate}
\end{theorem}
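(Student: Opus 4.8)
The argument reduces the theta cycle of $\Phi$ to that of an elliptic (vector-valued, and for even $g$ half-integral weight) modular form extracted from a Fourier--Jacobi coefficient, and then feeds the classical theory of theta cycles into the ring description of Theorem~\ref{thm:structure-of-siegel-mod-p-rings}. Two facts about $\mathbb{D}$ modulo~$p$ underlie everything. First, $\mathbb{D}$ maps $\rmM^{(g)}_k(\bbF_p)$ to $\rmM^{(g)}_{k+p+1}(\bbF_p)$, so it acts on the graded ring $\bbF_p[x_1,\dots,x_n]/(C+\langle 1-B\rangle)$; combined with the non-divisibility-by-$B$ criterion of Corollary~\ref{cor:siegel-characterization-of-maximal-filtration} this yields the upper bound $\omega(\mathbb{D}\Psi)\le\omega(\Psi)+p+1$ (when $\mathbb{D}\Psi\not\equiv 0$) together with a precise description of the ``falls''. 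Second, since $\det(2T)=2^g\det(T)$ and $2^{g(p-1)}\equiv 1\pmod p$, iterating the Fourier action $c(T)\mapsto\det(T)c(T)$ gives $\mathbb{D}^{p-1}\Phi\equiv\Phi-\Phi\,|\,U(p)\pmod p$, and $\mathbb{D}$ kills $\Phi\,|\,U(p)$; this is the link between the end of the theta cycle and the $U(p)$ congruence. A preliminary reduction shows $\omega(\Phi)=k$ (otherwise $\Phi\equiv\Psi_{p-1}^m\Phi''$ with $\omega(\Phi'')$ too small to admit a nonzero Fourier--Jacobi coefficient of index $M$, using $p\ge\max(k,g+3)$).

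I would then exploit the theta decomposition. By hypothesis $\Phi$ has a Fourier--Jacobi coefficient $\Phi_M$ with $p\nmid\det(2M)$ and $\Phi_M\not\equiv 0\pmod p$; writing $\Phi_M=\sum_\mu h_\mu\,\vartheta_{M,\mu}$ expresses it through the index-$M$ theta functions, of weight $\tfrac{g-1}{2}$, and a vector-valued elliptic modular form $h=(h_\mu)$ of weight $k-\tfrac{g-1}{2}$; the hypothesis $p\nmid\det(2M)$ is exactly what makes this decomposition non-degenerate modulo~$p$ and forces $h\not\equiv 0$. Under the decomposition the Fourier action of $\mathbb{D}$ on $\Phi$ becomes, up to the $p$-adic unit $\det(2M)$, the classical theta operator $\theta=q\,\frac{d}{dq}$ on $h$, and $U(p)$ on $\Phi$ corresponds to $U(p)$ on $h$ (again using $p\nmid\det(2M)$). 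For $g$ odd, $h$ has integral weight $\ell:=k-\tfrac{g-1}{2}$, which the hypotheses place in the range where the classical theory applies, and the classical theta cycle theorem gives $\omega(\theta^{p+1-\ell}h)=2p+2-\ell$ if $h\,|\,U(p)\not\equiv 0$ and $p+3-\ell$ if $h\,|\,U(p)\equiv 0$. Since $p+1-\ell=p+1-k+\tfrac{g-1}{2}$, translating back through the theta decomposition --- with the lower bound $\omega(\mathbb{D}^j\Phi)\ge\omega\big((\mathbb{D}^j\Phi)_M\big)$ and the matching upper bound from the first paragraph --- gives $\omega\big(\mathbb{D}^{p+1-k+\frac{g-1}{2}}\Phi\big)=2p+1+g-k$ or $p+2+g-k$ according as $\Phi\,|\,U(p)\not\equiv 0$ or $\equiv 0$. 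This is part~(a); note that the ``fall'' in the cycle of $\Phi$ occurs not when $\omega\equiv 0$ but when $\omega\equiv\tfrac{g-1}{2}\pmod p$, exactly the shift introduced by the weight-$\tfrac{g-1}{2}$ theta functions.

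For part~(b), $g$ even, the same scheme applies but $h$ now has half-integral weight $\ell=k-\tfrac{g-1}{2}$, so the theta cycle of $h$ modulo~$p$ behaves differently: at its half-way point $\theta^{(p-1)/2}h$ is congruent to the quadratic twist of $h$ by the Legendre symbol modulo~$p$ (Euler's criterion), and $h$ carries a quadratic character in $\det(2M)$ through its Weil representation --- which is why the relevant Fourier--Jacobi index is constrained only by $p\nmid\det(2M)$. Consequently one must run the cycle past the twist, which pushes the relevant exponent up to $\tfrac{3p+g+1}{2}-k$ and the filtration up to $3p+1+g-k$, respectively $2p+2+g-k$. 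For the cycle to close up --- i.e.\ for $\Phi\,|\,U(p)\equiv 0$, equivalently $\mathbb{D}^{p-1}\Phi\equiv\Phi$ --- with all intermediate filtrations nonnegative, a length count (in which $k>\tfrac{g+4}{2}$ is used to ensure $\ell=k-\tfrac{g-1}{2}>\tfrac52$, placing $h$ in the admissible range) shows one needs $p<2k-g-3$; hence for $p>2k-g-3$ no such closure is possible and $\Phi\,|\,U(p)\not\equiv 0$, which is (b)(i), while for $p<2k-g-3$ the bookkeeping gives the values in (b)(ii).

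The step I expect to be the main obstacle is the passage between $\Phi$ and $h$ carried out modulo~$p$: making the theta decomposition and the correspondence of $\mathbb{D},U(p)$ with $\theta,U(p)$ precise, and checking that it respects filtrations in both directions --- the lower bound via a nonzero Fourier--Jacobi coefficient, the upper bound via the ring $\bbF_p[x_1,\dots,x_n]/(C+\langle 1-B\rangle)$ and Corollary~\ref{cor:siegel-characterization-of-maximal-filtration} --- together with developing enough of the theta cycle theory of vector-valued and half-integral weight modular forms modulo~$p$ to feed into it. Tracking $\det(2T)=\det(2M)\cdot D$ through the Fourier--Jacobi expansion, and extracting from the parity of $g$ the different cycle lengths --- the shift by $\tfrac{g-1}{2}$ when $g$ is odd versus the extra half-turn forced by the quadratic twist when $g$ is even --- is the conceptual heart that explains the surprising even/odd dichotomy.
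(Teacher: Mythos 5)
Your overall architecture --- reduce $\mathbb{D}$ to a heat/theta operator on a distinguished Fourier--Jacobi coefficient, run Tate's bookkeeping on the resulting cycle, and control filtrations with the mod-$p$ ring structure --- is the paper's strategy (the paper phrases the reduction via the heat operator $\bbL_M$ acting on $\Phi_M\in\rmJ_{k,M}(\ZZ_{(p)})$, i.e.\ $\mathbb{D}(\Phi)=\sum_M\bbL_M(\Phi_M)e^{2\pi i\,\tr(MW)}$, rather than via the theta decomposition to a vector-valued $h$; these differ only by the unit $\det(2M)$). The genuine gap is at the step that carries all the weight: the exact filtration of $\mathbb{D}(\Phi)$, namely $\omega(\mathbb{D}\Phi)\le\omega(\Phi)+p+1$ with equality if and only if $p\nmid(2\omega(\Phi)-g+1)$ (Proposition~\ref{prop:filtration-of-D-operator}). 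Your sandwich --- lower bound $\omega(\mathbb{D}^{j}\Phi)\ge\omega\big((\mathbb{D}^{j}\Phi)_M\big)$ from the cycle of $h$, upper bound from \eqref{eq:action-of-D-operator} --- pins down $\omega(\mathbb{D}^{j}\Phi)$ only at steps where no fall occurs; at the fall itself the two bounds differ by a positive multiple of $p-1$, and nothing in your proposal shows that the filtration of the \emph{Siegel} form actually drops. The paper proves the drop by a mechanism you do not have: the Rankin--Cohen bracket $\{\Phi,\Upsilon\}\in\rmM^{(g)}_{k+k'+2}(\ZZ_{(p)})$ of B\"ocherer--Nagaoka, which for $p\mid(2k-g+1)$ is congruent to $\tfrac{(2k')!}{(2k'-g)!}\mathbb{D}(\Phi)\Upsilon$, combined with an $\Upsilon$ whose polynomial is coprime to $B$ (Lemma~\ref{lem:existence-of-relatively-prime-polynomial}), the integral-domain property (Lemma~\ref{la:MFp-integral-domain}), and Corollary~\ref{cor:siegel-characterization-of-maximal-filtration}. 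The converse (``no fall'') direction is also not free: it requires that $\Phi_M E_2$ have maximal filtration, which the paper derives from the freeness of $\rmJ_{\bullet,M}(\ZZ_{(p)})$ over $\rmM^{(1)}_\bullet(\ZZ_{(p)})$ (Theorem~\ref{thm:exception-set-is-trivial}, Proposition~\ref{prop:jacobi-p-integral-coefficients}); the analogous statement for your vector-valued, possibly half-integral weight $h$ modulo $p$ is not ``classical theory'' and would have to be proved.

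Two further problems. First, your explanation of the even/odd dichotomy is not the actual mechanism: the congruence $\theta^{(p-1)/2}h\equiv\sum\left(\tfrac{n}{p}\right)c(n)q^n$ holds for \emph{every} weight, integral or not, so it cannot be what separates the cases. The dichotomy comes from solving the linear congruence $2c_1+2B-2-l\equiv 0\pmod p$ (with $l=g-1$) for an integer $c_1$ in the admissible range: for $l$ even one halves directly and gets $c_1=p+1-B+\tfrac{l}{2}$; for $l$ odd one must first add $p$, shifting the admissible fall to $c_1=\tfrac{3p+l}{2}+1-B$ and, via $c_1\ge 1$, producing the obstruction $p<2k-g-3$ of part (b)(i). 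Second, the theorem's dichotomy is on $\Phi\,\big|\,U(p)$, whereas your reduction runs the cycle of $h$ and therefore sees only $\Phi_M\,\big|\,U_p$; these need not coincide (another Fourier--Jacobi coefficient may survive $U(p)$), so the closure of the cycle, via $\mathbb{D}^{p-1}\Phi\equiv\Phi-\Phi\,\big|\,U(p)$, must be argued at the level of $\Phi$ itself, as the paper does, not at the level of $h$.
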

Theorem~\ref{thm:Siegel-U_p} reduces to Tate's original result (see $\S7$ of~\cite{Joch-1982}) if $g=1$, and to the main result of~\cite{C-C-R-Siegel} if $g=2$.  It is remarkable that the parity of $g$ dictates the criterion.  If $g$ is even, then the result is stronger, since one can exclude $U(p)$ congruences modulo $p$ for almost all primes $p$.  A similar phenomenon occurs in the case of Jacobi forms of higher degree (see Theorem~\ref{thm:Jacobi-U_p}), which we illuminate in Remark~\ref{rem:U_p-explanation}. 

\vspace{1ex}

This paper naturally divides into two parts:  Section~\ref{sec:jacobiforms} is on Jacobi forms and Section~\ref{sec:Siegelmodularforms} is on Siegel modular forms.  We often consider the Fourier-Jacobi expansions of Siegel modular forms and apply our findings from Section~\ref{sec:jacobiforms} to prove results in Section~\ref{sec:Siegelmodularforms}.  In Section~\ref{sec:jacobiformspreliminaries}, we give some background on Jacobi forms.  In Section~\ref{sec:jacobiformsmodp}, we establish properties of Jacobi forms on $\HH \times\CC^l$ modulo $p$, which extend results of Sofer~\cite{Sof-JNT97} and \cite{heat} on the $l=1$ case.  This generalization is somewhat intricate, since (unlike the $l=1$ case) there is no explicit basis for the ring of Jacobi forms of higher degree.  In Section~\ref{sec:expceptional-set-for-jacobi-forms},  we show that the $\rmM^{(1)}_\bullet(\ZZ_{(p)})$\nbd module $\rmJ_{\bullet, M}(\ZZ_{(p)})$ (defined in Section~\ref{sec:expceptional-set-for-jacobi-forms}) is free for all $p \ge 5$.  In Section~\ref{sec:HeatcyclesandUcongruences}, we extend \cite{crit} to Jacobi forms of higher degree.  In particular,  we prove Theorem~\ref{thm:Jacobi-U_p}, which characterizes $U_p$ congruences of Jacobi forms on $\HH \times\CC^l$.  In Section~\ref{sec:Jacobiexamples}, we illustrate Theorem~\ref{thm:Jacobi-U_p} with explicit examples.   In Section~\ref{sec:Siegelpreliminaries}, we give some background on Siegel modular forms.  In Section~\ref{sec:modularformsmodp}, we explore Siegel modular forms modulo $p$.  In Section~\ref{sec:structureofmodularformsmodp}, we study the structure of Siegel modular forms modulo $p$, and we prove Theorem~\ref{thm:structure-of-siegel-mod-p-rings}.  In Section~\ref{sec:ThetacyclesandUcongruences}, we investigate theta cycles of Siegel modular forms.  We prove Proposition~\ref{prop:filtration-of-D-operator}, which describes the relation of the weight filtration and the theta operator.  The proof of Proposition~\ref{prop:filtration-of-D-operator} relies on many results of this paper, and in particular, it depends on Corollary~\ref{cor:siegel-characterization-of-maximal-filtration}, Theorem~\ref{thm:exception-set-is-trivial}, and Proposition~\ref{prop: filtration-of-heat-application}.  Theorem~\ref{thm:Siegel-U_p} follows from Proposition~\ref{prop:filtration-of-D-operator}.  Finally, in Section~\ref{sec:Siegelexample}, we apply Theorem~\ref{thm:Siegel-U_p} in the case of the Schottky form $J_4$, which (up to normalization) is the unique Siegel cusp form of weight $8$ and degree $4$.  We find that $J_4\,\big|\,U(7)\equiv 0\pmod{7}$, while $J_4\,\big|\,U(p)\not\equiv 0\pmod{p}$ for $p=5$ and $p>7$.

\vspace{1ex}

\noindent
{\it Acknowledgments}:
To be entered after the referee's report is received in its final form.

\section{Jacobi forms}
\label{sec:jacobiforms}

\subsection{Preliminaries}
\label{sec:jacobiformspreliminaries}
Throughout, $M$ is a symmetric, positive definite, half-integral $l\times l$ matrix with integral diagonal entries.  Let $\HH\subset\CC$ be the usual complex upper half plane.  Eichler and Zagier \cite{EZ} systematically study Jacobi forms on $\HH\times\CC$, and Ziegler \cite{Zi} introduces Jacobi forms of higher degree.  Set $\Gamma^\rmJ := \SL{2}(\ZZ) \ltimes \ZZ^{2 l}$ and let $\Gamma \subseteq \Gamma^\rmJ$ be a subgroup.  Given a ring $R \subseteq \CC$, let $\rmM^{(1)}_k (R)$ be the space of elliptic modular forms of weight $k$ with coefficients in $R$, and let $\rmJ_{k, M}(\Gamma, R)$ be the space of Jacobi forms of higher degree on $\HS \times \CC^l$ of weight~$k$ and index~$M$ whose Fourier series coefficients at any cusp are contained in~$R$.  If $l=1$, then we always write $\rmJ_{k, m}(\Gamma, R)$ with $m:=M\in\ZZ$.  We suppress $\Gamma$, if $\Gamma = \Gamma^\rmJ$, and $R$, if $R = \CC$.  Recall that $\phi\in\rmJ_{k, M}$ satisfies the transformation law
\begin{multline}
\label{eq:Jacobi transformation law}
  \phi\left( \frac{a\tau+b}{c\tau+d},\,
             \frac{z+\lambda\tau+\mu}{c\tau+d} \right)
\\
=
  (c\tau+d)^{k}\,
  \exp\Big(2\pi i \big( \frac{c\, M[z+\lambda\tau+\mu]}
                              {c\tau+d}
                        - M[\lambda]\tau - 2 \rT \lambda M z
      \big)\Big)\,
  \phi(\tau,z)
\text{,}
\end{multline}
for all $\left[\left(\begin{smallmatrix}a & b\\ c & d\end{smallmatrix}\right),\lambda,\mu\right]\in \Gamma^\rmJ$, where here and throughout the paper $U[V]: = \rT{V} U V$ for matrices $U,V$ of appropriate size.  Furthermore, $\phi$ has a Fourier series expansion of the form
\begin{gather}
\label{eq:Jacobi Fourier expansion}
  \phi(\tau,z)
=
  \sum_{\begin{smallmatrix} n\geq 0,r\in\ZZ^l \\ 4 \det(M)\, n - M^{\#}[r] \geq 0 \\\end{smallmatrix}} \hspace{-1em}
  c(\phi;n, r)\, q^n\zeta^r
\text{,}
\end{gather}
where $q := \exp( 2\pi i\, \tau)$ ($\tau\in\HH$), $\zeta^r := \exp(2 \pi i\, \rT r z)$ ($z \in \CC^l$, $r \in \ZZ^l$), and where $M^{\#}$ is the adjugate of $M$.  If the expansion in~\eqref{eq:Jacobi Fourier expansion} is only over $n$ with $4 \det(M) n - M^{\#}[r] > 0$ then $\phi$ is a Jacobi cusp form of weight~$k$ and index~$M$.  If $\phi: \HS \times \CC^l\rightarrow\CC$ is holomorphic, $\phi$ satisfies (\ref{eq:Jacobi transformation law}), and $\phi$ has a Fourier series expansion as in (\ref{eq:Jacobi Fourier expansion}), but with the difference that $n\gg-\infty$, then $\phi$ is a weakly-holomorphic Jacobi form of weight $k$ and index~$M$.  

\begin{remark}
\label{rm:semi-definite-indices}
Consider the previously excluded case that $M$ is semi-positive definite.   Note that there exists an $U \in \GL{l}(\ZZ)$ such that $M[U] = \left(\begin{smallmatrix} M' & 0 \\ 0 & 0 \end{smallmatrix}\right)$, where $M'$ is a positive definite $l' \times l'$ matrix with $l' = {\rm rank}\, M$.  Writing Jacobi forms~$\phi(\tau, z)$ of index~$M$ in terms of coordinates~$(U z)_i$, allows one to apply standard arguments (see Theorem~1.2 in~\cite{EZ}) to find that $\phi$ is constant with respect to $(U z)_i$ for $l' + 1 \le i \le l$.  Hence the results that we prove for Jacobi forms of positive definite index carry over to Jacobi forms of semi-positive definite index.
\end{remark}

Equation~\eqref{eq:Jacobi transformation law} with $\left(\begin{smallmatrix} a & b \\ c & d \end{smallmatrix}\right) = \left(\begin{smallmatrix} 1 & 0 \\ 0 & 1 \end{smallmatrix}\right)$ yields the so-called theta decomposition, which implies the following isomorphism~(for a nice exposition, see \cite{Sko-Weil}).
\begin{gather}
\label{eq:theta-decomposition}
  \Theta_M :\,
  \rmJ_{k, M}
\longrightarrow
  \rmM^{(1)}_{k - \frac{l}{2}} ({\check \rho}_M)
\text{,}
\end{gather}
where $\rmM^{(1)}_{k - \frac{l}{2}}({\check \rho}_M)$ is the space of vector-valued elliptic modular forms of weight~$k - \frac{l}{2}$ and type~${\check \rho}_M$, and ${\check \rho}_M$ is the dual of the Weil representation associated to $M$~(again, see~\cite{Sko-Weil} for a detailed explanation).

Throughout, the Fourier series coefficients of a modular form $f$, a vector-valued modular form~$f_\rho$, and a Jacobi form $\phi$ are denoted by $c(f;\, n)$, $c(f_\rho;\, n, r)$, and $c(\phi;\, n, r)$, respectively.  The theta decomposition preserves the set of Fourier series coefficients: $\{ c(\phi;\, n, r) \} = \{ c(\Theta_M(\phi); n, r ) \}$.

\subsection{Jacobi forms mod~$p$}
\label{sec:jacobiformsmodp}
Recall from the introduction that~$p \ge 5$ is a prime.  If $\phi(\tau,z) = \sum c(\phi;n,r)\, q^n \zeta^r$ and $\psi(\tau,z) = \sum c(\psi;n,r)\, q^n \zeta^r$ are Jacobi forms with coefficients in $\ZZ_{(p)}$, then $\phi\equiv \psi \pmod{p}$ when $c(\phi;n,r)\equiv c(\psi;n,r) \pmod{p}$ for all $n,r$.  Set
\begin{gather*}
  \rmJ_{k,M} (\bbF_p)
:=
  \big\{\widetilde{\phi}\, :\, \phi\in \rmJ_{k,M}(\ZZ_{(p)}) \big\}
\text{,}
\end{gather*}
where $\widetilde{\phi}(\tau,z) = \sum \widetilde{c}(\phi;\, n,r)\, q^n \zeta^r$ ($\widetilde{c}(\phi;\, n, r) \in \bbF_p$) denotes the reduction modulo $p$ of $\phi$.  If $\phi \in \rmJ_{k,M}(\ZZ_{(p)})$, then we denote its filtration modulo $p$ by
\begin{gather*}
  \omega\big(\phi\big)
:=
  \inf \big\{ k\,:\, \phi \pmod{p}\,\in \rmJ_{k,M}(\bbF_p) \big\}
\text{.}
\end{gather*}
Note that the $\bbF_p$-algebra of elliptic modular forms modulo $p$ has a natural grading with values in $\ZZ \slashdiv (p - 1) \ZZ$.  We record the following direct consequence of Theorem~2 of~\cite{SwD-l-adic}:
\begin{proposition}[\cite{SwD-l-adic}]
\label{prop:congruences-for-elliptic-modular-forms}
Let $(f_k)_k$ be a finite family of elliptic modular forms $f_k\in \rmM^{(1)}_k (\ZZ_{(p)})$.  If $\sum_k f_k \equiv 0 \pmod{p}$, then for all $a \in \ZZ \slashdiv (p - 1)\ZZ$ we have
\begin{gather*}
  \sum_{k \in a + (p - 1) \ZZ} f_k
\equiv
  0 \pmod{p}
\text{.}
\end{gather*}
\end{proposition}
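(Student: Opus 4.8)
The plan is to deduce the statement from the known ring structure of elliptic modular forms modulo $p$, which is Theorem~2 of~\cite{SwD-l-adic}. Since $p \ge 5$, the Eisenstein series $E_4$ and $E_6$ have $p$-integral rational coefficients and $\rmM^{(1)}_\bullet(\ZZ_{(p)}) = \ZZ_{(p)}[E_4, E_6]$ is a polynomial ring; hence reduction modulo $p$ yields a surjection of $\bbF_p$-algebras $\bbF_p[x_4, x_6] \thra \rmM^{(1)}_\bullet(\bbF_p)$ sending $x_4, x_6$ (placed in weights $4$ and $6$) to the reductions of $E_4, E_6$. By Theorem~2 of~\cite{SwD-l-adic} its kernel is the principal ideal $\langle A - 1\rangle$, where $A \in \bbF_p[x_4, x_6]$ is the isobaric polynomial of weight~$p - 1$ with $A(E_4, E_6) \equiv E_{p-1} \equiv 1 \pmod{p}$. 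First I would observe that every monomial of $A - 1$ has weight divisible by $p - 1$ (those in $A$ have weight $p-1$, and $1$ has weight $0$), so $\langle A - 1\rangle$ is a homogeneous ideal for the $\ZZ \slashdiv (p-1)\ZZ$-grading of $\bbF_p[x_4, x_6]$. Consequently $\rmM^{(1)}_\bullet(\bbF_p)$ inherits a $\ZZ \slashdiv (p-1)\ZZ$-grading $\rmM^{(1)}_\bullet(\bbF_p) = \bigoplus_{a \in \ZZ\slashdiv(p-1)\ZZ} \rmM^{(1)}_\bullet(\bbF_p)^{(a)}$, in which $\rmM^{(1)}_\bullet(\bbF_p)^{(a)}$ is the image of the weight-$\equiv a$ part of $\bbF_p[x_4, x_6]$, namely $\rmM^{(1)}_\bullet(\bbF_p)^{(a)} = \sum_{k \in a + (p-1)\ZZ} \rmM^{(1)}_k(\bbF_p)$.

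Granting this, the proposition follows by a bookkeeping regrouping. The family $(f_k)_k$ being finite, only finitely many $f_k$ are nonzero. For $a \in \ZZ\slashdiv(p-1)\ZZ$ put $g_a := \sum_{k \in a + (p-1)\ZZ} f_k$, a finite sum of modular forms of various weights all congruent to $a$ modulo $p-1$; its reduction modulo~$p$ is the $q$-series $\sum_{k \in a+(p-1)\ZZ} \widetilde{f_k}$, which lies in $\rmM^{(1)}_\bullet(\bbF_p)^{(a)}$ since each summand does. From $\sum_k f_k \equiv 0 \pmod{p}$ we obtain $\sum_{a} \widetilde{g_a} = 0$ inside $\rmM^{(1)}_\bullet(\bbF_p)$, and directness of the grading forces $\widetilde{g_a} = 0$ for every $a$, i.e. $\sum_{k \in a + (p-1)\ZZ} f_k \equiv 0 \pmod{p}$, as claimed.

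The only genuine content is the first paragraph: the identification of the kernel of $\bbF_p[x_4, x_6] \thra \rmM^{(1)}_\bullet(\bbF_p)$ with $\langle A - 1\rangle$, equivalently the assertion that the single congruence $E_{p-1} \equiv 1 \pmod{p}$ accounts for every congruence between modular forms of distinct weights. This is exactly Theorem~2 of~\cite{SwD-l-adic} (valid because $p \ge 5$), so it is an input rather than an obstacle; the passage from it to the proposition is the elementary regrouping above and presents no difficulty.
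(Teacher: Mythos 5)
Your proof is correct: the paper itself offers no argument, recording the proposition as a direct consequence of Theorem~2 of~\cite{SwD-l-adic}, and your derivation—observing that the generator $A-1$ of the kernel is homogeneous for the $\ZZ\slashdiv(p-1)\ZZ$-grading, so that $\rmM^{(1)}_\bullet(\bbF_p)$ inherits that grading and homogeneous components of a relation vanish separately—is exactly the standard way to make that consequence explicit. This matches the paper's (implicit) approach.
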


Proposition~\ref{prop:congruences-for-elliptic-modular-forms} allows the following slight extension of Sofer's~\cite{Sof-JNT97} work on congruences of Jacobi forms on $\HH\times\CC$. 
\begin{proposition}
\label{prop:improved-Sofer}
Let $(\phi_k)_k$ be a finite family of Jacobi forms $\phi_k \in \rmJ_{k, m}(\ZZ_{(p)})$.  If $\sum_k \phi_k \equiv 0 \pmod{p}$, then for all $a \in \ZZ \slashdiv (p - 1) \ZZ$ we have
\begin{gather*}
  \sum_{k \in a + (p - 1) \ZZ} \phi_k
\equiv
  0 \pmod{p}
\text{.}
\end{gather*}
\end{proposition}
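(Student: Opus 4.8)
The idea is to transport the congruence among the $\phi_k$ to a congruence among elliptic modular forms, where Proposition~\ref{prop:congruences-for-elliptic-modular-forms} applies verbatim. For $l = 1$ this is possible because of the explicit structure of Jacobi forms of index~$m$ (which has no analogue in higher degree): by Eichler--Zagier~\cite{EZ} the weak Jacobi forms of index~$m$ form a free $\rmM^{(1)}_\bullet$-module with a finite homogeneous basis $\psi_1, \dots, \psi_r$ of weights $\kappa_1, \dots, \kappa_r$, which one may take to consist of monomials in the standard generators $\phi_{-2, 1}$, $\phi_{0, 1}$ (and, for odd weights, $\phi_{-1, 2}$); these basis elements have $p$-integral Fourier coefficients and \emph{pairwise distinct} orders of vanishing at $z = 0$. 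I will use two $p$-integral refinements, which go back to Sofer~\cite{Sof-JNT97}: \emph{(i)} every $\phi \in \rmJ_{k, m}(\ZZ_{(p)})$ can be written $\phi = \sum_j h_j\,\psi_j$ with $h_j \in \rmM^{(1)}_{k - \kappa_j}(\ZZ_{(p)})$; and \emph{(ii)} the $\psi_j$ stay independent modulo~$p$ over elliptic modular forms, i.e.\ if $h_j \in \rmM^{(1)}_\bullet(\ZZ_{(p)})$ satisfy $\sum_j h_j\psi_j \equiv 0 \pmod{p}$, then $h_j \equiv 0 \pmod{p}$ for every~$j$.

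Granting \emph{(i)} and \emph{(ii)}, the rest is bookkeeping. Write $\phi_k = \sum_j h_{k, j}\,\psi_j$ with $h_{k, j} \in \rmM^{(1)}_{k - \kappa_j}(\ZZ_{(p)})$. Since $\sum_j \big( \sum_k h_{k, j} \big)\psi_j = \sum_k \phi_k \equiv 0 \pmod{p}$, fact~\emph{(ii)} yields $\sum_k h_{k, j} \equiv 0 \pmod{p}$ for each~$j$. Fix~$j$; relabelling $h_{k, j}$ by its weight $k - \kappa_j$ (a shift of $k$ by the constant $\kappa_j$, hence a bijection on $\ZZ \slashdiv (p - 1)\ZZ$), Proposition~\ref{prop:congruences-for-elliptic-modular-forms} applied to the family $(h_{k, j})_k$ gives $\sum_{k \in a + (p - 1)\ZZ} h_{k, j} \equiv 0 \pmod{p}$ for every $a \in \ZZ \slashdiv (p - 1)\ZZ$. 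Summing over~$j$,
\begin{gather*}
  \sum_{k \in a + (p - 1)\ZZ} \phi_k
=
  \sum_j \Big( \sum_{k \in a + (p - 1)\ZZ} h_{k, j} \Big) \psi_j
\equiv
  0 \pmod{p}
\text{,}
\end{gather*}
which is the assertion.

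Thus the real content is in \emph{(i)} and \emph{(ii)}, and the step I expect to be the main obstacle is establishing that the Eichler--Zagier decomposition survives reduction modulo~$p$ intact. Fact~\emph{(ii)} is elementary for $p \ge 5$: if $\psi_{j_0}$ has the smallest vanishing order $d$ at $z = 0$ among those $\psi_j$ with $h_j \not\equiv 0 \pmod p$, then by the distinctness of the vanishing orders the coefficient of $z^{d}$ in $\sum_j h_j\psi_j$ equals $h_{j_0}$ up to a $p$-unit, forcing $h_{j_0} \equiv 0 \pmod p$ --- a contradiction. Fact~\emph{(i)} is the delicate one: one must rule out $p$ dividing the denominators occurring in the change of basis, which is exactly why the hypothesis is $p \ge 5$, and I would quote it from Sofer~\cite{Sof-JNT97} rather than reprove it. (A basis-free alternative runs through the theta decomposition~\eqref{eq:theta-decomposition}, which preserves the set of Fourier coefficients: the hypothesis becomes $\sum_k \Theta_m(\phi_k) \equiv 0 \pmod{p}$ for vector-valued elliptic modular forms of weight $k - \tfrac12$ and the fixed type $\check\rho_m$, to which one applies the analogue of Proposition~\ref{prop:congruences-for-elliptic-modular-forms}, valid by the same Hasse-invariant argument since $E_{p - 1}$ has weight $p - 1$ and is $\equiv 1 \pmod p$. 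I favour the route above because it uses Proposition~\ref{prop:congruences-for-elliptic-modular-forms} literally, for scalar level-one forms.)
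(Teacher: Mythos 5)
Your proposal is correct and follows essentially the same route as the paper: decompose $\phi_k$ over the Eichler--Zagier generators $\phi_{-2,1}$, $\phi_{0,1}$ (and $\phi_{-1,2}$ for odd weight), use the Taylor expansion in $z$ as in Lemma~2.2 of Sofer to see that the coefficient forms satisfy $\sum_k f_{k,j}\equiv 0\pmod{p}$, and then invoke Proposition~\ref{prop:congruences-for-elliptic-modular-forms}. The only difference is presentational: you isolate the $p$-integrality of the coefficient forms as an explicit input (i), which the paper uses implicitly by writing $f_{k,j}\in\rmM^{(1)}_{k+2j}(\ZZ_{(p)})$ from the outset.
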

\begin{proof}
Recall that the ring of weak Jacobi forms of index $m$ is a module of rank $2 m$ over~$\rmM^{(1)}_\bullet$ (see $\S8$ and $\S9$ of \cite{EZ}).  In particular, if $\phi_k \in \rmJ_{k, m}(\ZZ_{(p)})$ has even weight $k$, then there are weak Jacobi forms $\phi_{-2, 1}$ and $\phi_{0, 1}$ of index~$1$ and weight $-2$ and $0$ respectively, such that 
\begin{gather*}
  \phi_k
=
  \sum_{0 \le j \le m} f_{k, j} \,
  \phi_{-2, 1}^j \phi_{0, 1}^{m - j}
\text{,} 
\end{gather*}
where $f_{k, j}\in \rmM^{(1)}_{k+2j} (\ZZ_{(p)})$.  If $k$ is odd, then $\phi_k$ can be expressed as the product of a weak Jacobi form~$\phi_{-1, 2}$ of index~$2$ and weight~$-1$ and another weak Jacobi form of even weight.  Thus, we have
\begin{gather*}
  \sum_{k \in \ZZ} \phi_k
=
  \sum_{k \in 2 \ZZ} \sum_{0 \le j \le m} f_{k, j} \,
                   \phi_{-2, 1}^j \phi_{0, 1}^{m - j}
  + 
  \phi_{-1, 2} \sum_{k \in 2 \ZZ + 1} \sum_{0 \le j \le m - 2} f_{k, j} \,
                   \phi_{-2, 1}^j \phi_{0, 1}^{m - j}
\text{.}
\end{gather*}
Exploiting the Taylor expansion with respect to~$z$ in the same way as for even weights in Lemma~2.2 of~\cite{Sof-JNT97}, we find that
\begin{gather*}
  \sum_{k \in \ZZ} f_{k, j}
\equiv
  0 \pmod{p}
\text{,}
\end{gather*}
and Proposition~\ref{prop:congruences-for-elliptic-modular-forms} yields the claim.
\end{proof}

We need the next Lemma and Proposition to prove the extension of Proposition~\ref{prop:improved-Sofer} to Jacobi forms on $\HS \times \CC^l$.

\begin{lemma}
\label{la:jacobi_restriction_vector_for_r}
Fix $0 < b \in \ZZ$ and set
\begin{gather*}
  R
:=
  \big\{ r = \rT(r_1, \ldots, r_l)\in \ZZ^l \,:\, |r_j| < b \text{ for all $1 \le j \le l$} \big\}
\text{.}
\end{gather*}
Then there exists an $s \in \ZZ^l$ such that for all $r, r' \in R$ the following holds:
\begin{gather*}
  \big( \rT s r = \rT s r' \big)
\Longleftrightarrow
  \big( r = r' \big)
\text{.}
\end{gather*}
\end{lemma}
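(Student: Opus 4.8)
The plan is to encode the coordinates of $r$ by a single integer via a radix (positional) representation. Observe that every $r = \rT(r_1,\ldots,r_l) \in R$ satisfies $|r_j| < b$, so $r_j + b \in \{0, 1, \ldots, 2b-1\}$; that is, after the shift $r \mapsto r + b\cdot\rT(1,\ldots,1)$ the entries lie in a fixed range of size $2b$. I would therefore set $s := \rT(1,\, 2b,\, (2b)^2,\, \ldots,\, (2b)^{l-1})$ and claim this works.

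The key step is the uniqueness of base-$(2b)$ expansions. Concretely, I would argue as follows. If $r, r' \in R$ with $\rT s r = \rT s r'$, then $\rT s (r - r') = 0$, i.e.\ $\sum_{j=1}^{l} (2b)^{j-1}(r_j - r_j') = 0$. Each difference $d_j := r_j - r_j'$ satisfies $|d_j| \le 2b - 2 < 2b$, since $r_j, r_j' \in (-b, b)$. Now I would show by a standard induction on $l$ (or by reading off the equation modulo $2b$, then dividing by $2b$ and repeating) that the only way a combination $\sum_{j=1}^l (2b)^{j-1} d_j$ with all $|d_j| < 2b$ can vanish is $d_1 = \cdots = d_l = 0$: indeed $2b \mid d_1$ forces $d_1 = 0$, then $2b \mid d_2$ forces $d_2 = 0$, and so on. Hence $r = r'$. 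The reverse implication is trivial.

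There is no real obstacle here; the only thing to be slightly careful about is the bound $|d_j| < 2b$ (not merely $\le 2b$), which is exactly what guarantees $2b \mid d_j \Rightarrow d_j = 0$ — and this is where the strict inequality $|r_j| < b$ in the definition of $R$ is used. I would state the "carry-free subtraction" lemma (vanishing of a base-$N$ combination with digits of absolute value $< N$ implies all digits vanish) as a one-line induction and then conclude. An equivalent phrasing, if one prefers to avoid the word "base", is to bound $\big|\sum_{j=2}^{l}(2b)^{j-1}d_j\big|$ by a geometric-type estimate and compare with $|d_1|$, but the modular-reduction argument is cleaner and I would use that.
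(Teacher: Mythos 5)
Your proposal is correct and is essentially the paper's own argument: the paper simply declares $s = \rT(1, 4b, \ldots, (4b)^{l-1})$ and leaves the verification as "easy to check," and your radix/digit-uniqueness argument is exactly that verification, carried out with the slightly sharper base $2b$ (which suffices since the differences $d_j$ satisfy $|d_j| \le 2b-2 < 2b$). No gap; the only difference is your tighter choice of base.
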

\begin{proof}
It is easy to check that $s:= \rT(1, 4 b, \ldots, (4 b)^{l - 1})$ is as required.
\end{proof}

It will often be useful to restrict Jacobi forms as follows: If $\phi\in \rmJ_{k,M}(\ZZ_{(p)})$, $s \in \ZZ^l$, and $z' \in \CC$, then
\begin{gather*}
  \phi[s](\tau,z')
:=
  \phi(\tau, s z')
  \in \rmJ_{k, M[s]}(\ZZ_{(p)})
\text{.}
\end{gather*}

\begin{proposition}
\label{prop:jacobi_restriction_vector_for_r}
Let $(\phi_k)_{k}$ be a finite family of Jacobi forms $\phi_k\in \rmJ_{k,M}(\ZZ_{(p)})$.  If $0 \le n_0\in \ZZ$ is fixed, then there exists an $s \in \ZZ^l$ such that for all $n \le n_0$ and $r \in \ZZ^l$, we have
\begin{gather}
\label{eq:jacobi_restriction_vector_for_r:conversion-of-coeff}
  c(\phi_k[s];\, n, \rT s r)
=
  c(\phi_k;\, n, r)
\text{.}
\end{gather}

In particular, if $\phi_k \not\equiv 0 \pmod{p}$ for all~$k$, then there exists an $s \in \ZZ^l$ such that $\phi_k[s] \not\equiv 0 \pmod{p}$ for all~$k$.
\end{proposition}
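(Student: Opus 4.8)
The plan is to first confine the Fourier support of the whole finite family in the range $n \le n_0$ to a fixed box, and then to feed that box into the combinatorial Lemma~\ref{la:jacobi_restriction_vector_for_r} in order to choose a direction $s$ along which no two support vectors become identified. First I would observe that each $\phi_k$ is holomorphic, so $c(\phi_k; n, r) = 0$ unless $n \ge 0$, and that the support condition in~\eqref{eq:Jacobi Fourier expansion} forces $M^{\#}[r] \le 4\det(M)\,n$ whenever $c(\phi_k; n, r) \ne 0$. Since $M$ is positive definite, its adjugate $M^{\#} = \det(M)\,M^{-1}$ is positive definite as well, so $\{\, r \in \RR^l : M^{\#}[r] \le 4\det(M)\,n_0 \,\}$ is bounded; therefore there is an integer $b > 0$, depending only on $M$ and $n_0$, such that $|r_j| < b$ for all $1 \le j \le l$ whenever $c(\phi_k; n, r) \ne 0$ for some $k$ and some $0 \le n \le n_0$. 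I would then set $R := \{\, r = \rT(r_1, \ldots, r_l) \in \ZZ^l : |r_j| < b \text{ for all } j \,\}$ and take $s \in \ZZ^l$ as supplied by Lemma~\ref{la:jacobi_restriction_vector_for_r}; this $s$ has a nonzero coordinate, so $M[s] > 0$ and $\phi_k[s] \in \rmJ_{k, M[s]}(\ZZ_{(p)})$ is an honest Jacobi form.

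The only genuine computation is to record the effect of the substitution $z = s z'$ on Fourier expansions: since $\rT r (s z') = (\rT s r)\, z'$, it sends $\zeta^r = \exp(2\pi i\, \rT r z)$ to $\exp(2\pi i\, (\rT s r)\, z')$, whence
\begin{gather*}
  c(\phi_k[s];\, n, j)
=
  \sum_{\substack{r \in \ZZ^l \\ \rT s r = j}} c(\phi_k;\, n, r)
\text{.}
\end{gather*}
Then I would fix $n \le n_0$ and $r \in R$ and evaluate the right-hand side at $j = \rT s r$: every summand whose index vector lies outside $R$ vanishes by the choice of $b$, and by Lemma~\ref{la:jacobi_restriction_vector_for_r} the map $r \mapsto \rT s r$ is injective on $R$, so the only surviving summand is the one indexed by $r$ itself. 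This yields~\eqref{eq:jacobi_restriction_vector_for_r:conversion-of-coeff} for all $n \le n_0$ and all $r \in R$, hence for every pair $(n, r)$ with $n \le n_0$ that can carry a nonzero Fourier coefficient of some~$\phi_k$.

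For the last assertion I would then argue as follows. If $\phi_k \not\equiv 0 \pmod{p}$ for every $k$, choose for each $k$ indices $n_k \ge 0$ and $r_k \in \ZZ^l$ with $c(\phi_k; n_k, r_k) \not\equiv 0 \pmod{p}$; since the family is finite, $n_0 := \max_k n_k$ is a nonnegative integer, and each $r_k$ then lies in the box $R$ attached to this $n_0$. Applying the identity just established with this $n_0$ produces an $s \in \ZZ^l$ with $c(\phi_k[s];\, n_k, \rT s r_k) = c(\phi_k;\, n_k, r_k) \not\equiv 0 \pmod{p}$ for every $k$, so $\phi_k[s] \not\equiv 0 \pmod{p}$ for every $k$.

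I do not expect a serious obstacle: once the box $R$ is in place, the whole argument is the elementary substitution $z = s z'$ combined with Lemma~\ref{la:jacobi_restriction_vector_for_r}. The step that needs care is the uniform support bound in the first paragraph — producing a single $b$ (equivalently a single $R$) valid for the entire finite family and for all $n \le n_0$ — which rests on positive definiteness of $M^{\#}$, so that the relevant sublevel set is compact. In the $l = 1$ setting of~\cite{Sof-JNT97} and~\cite{heat} this boundedness is automatic, so it is the genuinely new ingredient needed for higher~$l$.
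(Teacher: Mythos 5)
Your proposal is correct and follows essentially the same route as the paper: bound the support of the relevant Fourier coefficients for $n \le n_0$ by a box $R$ using positive definiteness of $M^{\#}$, invoke Lemma~\ref{la:jacobi_restriction_vector_for_r} to get an $s$ that is injective on $R$, and deduce the second claim by taking $n_0$ at least $\max_k n_k$. You are in fact a bit more explicit than the paper (writing out the coefficient formula $c(\phi_k[s]; n, j) = \sum_{\rT s r = j} c(\phi_k; n, r)$ and noting that the identity is really only needed, and only guaranteed, for indices in the support), but the argument is the same.
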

\begin{proof}
Observe the condition $4 \det(M) n - M^{\#}[r] \ge 0$ in~\eqref{eq:Jacobi Fourier expansion}.  Let
\begin{gather*}
b > \max_{} \big\{ |r_j| \,:\, r = \rT(r_1, \ldots, r_l) \in \ZZ^l,\,
                            4 \det(M) n_0 - M^{\#}[r] \ge 0 \big\}
\text{,}
\end{gather*}
and apply Lemma~\ref{la:jacobi_restriction_vector_for_r} to find an $s\in\ZZ^l$ such that~\eqref{eq:jacobi_restriction_vector_for_r:conversion-of-coeff} holds.  The second statement follows from the first by choosing $n_0>\max\{n_k\}$, where $(n_k, r_k)$ are Fourier indices with minimal $n_k$ such that $c(\phi_k;\, n_k, r_k) \not\equiv0 \pmod{p}$. 
\end{proof}

We now extend Proposition~\ref{prop:improved-Sofer} to Jacobi forms on $\HS \times \CC^l$.
\begin{proposition}
\label{prop:jacobi_weight_congruence}
Let $\phi \in \rmJ_{k,M}(\ZZ_{(p)})$ and $\psi \in \rmJ_{k',M'}(\ZZ_{(p)})$ such that $0 \not \equiv \phi \equiv \psi \pmod{p}$.  Then $M = M'$ and $k \equiv k' \pmod{(p - 1)}$.

Moreover, if $M$ is fixed and $(\phi_k)_{k}$ is a finite family of Jacobi forms $\phi_k\in \rmJ_{k,M}(\ZZ_{(p)})$ such that $\sum_k \phi_k \equiv 0 \pmod{p}$, then for all $a \in \ZZ \slashdiv (p - 1) \ZZ$ we have
\begin{gather*}
  \sum_{k \in a + (p - 1)\ZZ} \phi_k
\equiv
  0 \pmod{p}
\text{.}
\end{gather*}
\end{proposition}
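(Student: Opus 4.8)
The plan is to reduce the multivariable statement (Jacobi forms on $\HS \times \CC^l$) to the one-variable case already handled in Proposition~\ref{prop:improved-Sofer}, using the restriction operators $\phi \mapsto \phi[s]$ together with Proposition~\ref{prop:jacobi_restriction_vector_for_r}. For the first assertion, suppose $0 \not\equiv \phi \equiv \psi \pmod p$ with $\phi \in \rmJ_{k,M}(\ZZ_{(p)})$ and $\psi \in \rmJ_{k',M'}(\ZZ_{(p)})$. That the indices agree, $M = M'$, should follow by reading off the Fourier support: the congruence forces equality of the sets of Fourier indices $(n,r)$ with nonzero reduction, and the index $M$ governs the shape of the support via the constraint $4\det(M)n - M^{\#}[r] \ge 0$ together with the theta-decomposition structure (the components of $\Theta_M(\phi)$ are indexed by $\ZZ^l / M \ZZ^l$ up to the relevant identifications); more carefully, one can recover $M$ from the transformation law by comparing the elliptic substitution behavior under $\left(\begin{smallmatrix}1&0\\0&1\end{smallmatrix}\right)$-translations in $z$. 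Granting $M = M'$, apply Proposition~\ref{prop:jacobi_restriction_vector_for_r} to the two-element family $\{\phi, \psi\}$: there is an $s \in \ZZ^l$ with $\phi[s] \not\equiv 0 \pmod p$, $\phi[s] \equiv \psi[s] \pmod p$, and $\phi[s] \in \rmJ_{k, M[s]}(\ZZ_{(p)})$, $\psi[s] \in \rmJ_{k', M[s]}(\ZZ_{(p)})$ are Jacobi forms on $\HS \times \CC$. Now Proposition~\ref{prop:improved-Sofer} (applied to the family $\phi[s] - \psi[s]$, or directly its $\ZZ/(p-1)\ZZ$-homogeneity consequence) yields $k \equiv k' \pmod{p-1}$, since a nonzero Jacobi form on $\HH \times \CC$ cannot be congruent to one of weight in a different residue class mod $p-1$.

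For the second assertion, fix $M$ and a finite family $\phi_k \in \rmJ_{k,M}(\ZZ_{(p)})$ with $\sum_k \phi_k \equiv 0 \pmod p$. Fix a residue class $a \in \ZZ/(p-1)\ZZ$ and set $\Phi_a := \sum_{k \in a + (p-1)\ZZ} \phi_k$; we want $\Phi_a \equiv 0 \pmod p$. Suppose not. Then, applying Proposition~\ref{prop:jacobi_restriction_vector_for_r} to the (finite) subfamily of those $\phi_k$ that are nonzero mod $p$ together with all the partial sums $\Phi_a$, we obtain a single $s \in \ZZ^l$ such that restriction by $s$ preserves the relevant low-order Fourier coefficients of every member, hence $\sum_k \phi_k[s] \equiv 0 \pmod p$ while $\Phi_a[s] \not\equiv 0 \pmod p$. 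Since $\phi_k[s] \in \rmJ_{k, M[s]}(\ZZ_{(p)})$ are Jacobi forms on $\HH \times \CC$, Proposition~\ref{prop:improved-Sofer} gives $\sum_{k \in a + (p-1)\ZZ} \phi_k[s] \equiv 0 \pmod p$, i.e.\ $\Phi_a[s] \equiv 0 \pmod p$, a contradiction. Hence $\Phi_a \equiv 0 \pmod p$ for all $a$.

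One technical point worth care: to invoke Proposition~\ref{prop:jacobi_restriction_vector_for_r} we must include in the family all objects whose non-vanishing mod $p$ we wish to preserve under restriction — namely each nonzero $\phi_k$ and each nonzero partial sum $\Phi_a$ — and choose $n_0$ larger than the minimal Fourier index witnessing non-vanishing for each of them; the proposition then supplies one $s$ that works simultaneously for all. The only genuine obstacle I anticipate is the identification $M = M'$ in the first statement: the restriction trick is not directly available there because $M[s]$ could a priori coincide for different $M$'s, so this must be argued intrinsically from the Fourier support or the transformation law before restricting. Everything after that is a clean reduction to the $l = 1$ case via Proposition~\ref{prop:improved-Sofer}.
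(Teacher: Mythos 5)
Your proposal is correct and follows essentially the same route as the paper: the index equality via the elliptic transformation behavior under $z\mapsto z+\lambda\tau$ (the paper makes this precise, following Sofer, by noting $q^{-M[\lambda]}\zeta^{-2M\lambda}\phi \equiv q^{-M'[\lambda]}\zeta^{-2M'\lambda}\phi \pmod{p}$ for all $\lambda\in\ZZ^l$, forcing $M[\lambda]=M'[\lambda]$), and the weight statement by restricting along a vector $s$ from Proposition~\ref{prop:jacobi_restriction_vector_for_r} and invoking Proposition~\ref{prop:improved-Sofer}. The only cosmetic difference is that the paper argues the second part directly, coefficient by coefficient (choosing $n_0\ge n$ for each index $(n,r)$), rather than by contradiction on the partial sums $\Phi_a$; both versions work.
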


\begin{proof}
Let $\phi \in \rmJ_{k,M}(\ZZ_{(p)})$ and $\psi \in \rmJ_{k',M'}(\ZZ_{(p)})$ such that $0 \not \equiv \phi \equiv \psi \pmod{p}$.  We prove the equality of indices as in the case of $l=1$ (see Lemma 2.1 of \cite{Sof-JNT97}).  More precisely, for any $\lambda \in \ZZ^l$ we have
\begin{gather}
\label{eq:jacobi-weight-congruence}
  q^{-M[\lambda]} \zeta^{-2 M \lambda} \phi
\equiv
  q^{-M'[\lambda]} \zeta^{-2 M' \lambda} \psi
  \equiv
  q^{-M'[\lambda]} \zeta^{-2 M' \lambda} \phi \pmod{p}
\text{.}
\end{gather}
We find that $M[\lambda] = M'[\lambda]$ for all $\lambda \in \ZZ^l$, and hence $M = M'$. The congruence $k \equiv k' \pmod{(p - 1)}$ follows from the second part.

Let $M$ be fixed and $\phi_k\in \rmJ_{k, M}(\ZZ_{(p)})$ such that $\sum_k \phi_k \equiv 0 \pmod{p}$.  Note that if $s \in \ZZ^l$, then $\phi_k[s]\in\rmJ_{k, M[s]}(\ZZ_{(p)})$.  Consider the sum
\begin{gather*}
  \sum_k \phi_k[s]
\equiv
  0 \pmod{p}
\text{,}
\end{gather*}
and apply Proposition~\ref{prop:improved-Sofer} to find that
\begin{gather}
\label{sum phi_k}
  \sum_{k \in a + (p - 1) \ZZ} \hspace{-.5em}
  \phi_k[s]
\equiv
  0 \pmod{p}
\text{.}
\end{gather}
If $0 \le n_0\in \ZZ$ is fixed, then Proposition~\ref{prop:jacobi_restriction_vector_for_r} asserts that there exists an $s \in \ZZ^l$ such that for all $n \le n_0$ and $r \in \ZZ^l$, we have $c(\phi_k[s]; n, \rT s r)=c(\phi_k; n, r)$.  In particular, (\ref{sum phi_k}) holds also for that $s$.   Thus, for arbitrary $n$ and $r$ we have
\begin{gather*}
  \sum_{k \in a + (p - 1) \ZZ} \hspace{-.5em}
  c(\phi_k;\, n, r)
\equiv
  0 \pmod{p}
\end{gather*}
and hence
\begin{gather*}
  \sum_{k \in a + (p - 1) \ZZ} \hspace{-.5em}
  \phi_k
\equiv
  0 \pmod{p}
\text{.}
\qedhere
\end{gather*}
\end{proof}

\begin{remark}
One can repeatedly employ~\eqref{eq:jacobi-weight-congruence} to find that Proposition~\ref{prop:jacobi_weight_congruence} can be extended as follows:  If $(\phi_{k, M})_{k,M}$ is a finite family of Jacobi forms $\phi_{k,M}\in \rmJ_{k,M}(\ZZ_{(p)})$ 
with $\sum_{k, M} \phi_{k, M} \equiv 0 \pmod{p}$, then for every fixed~$M$ and $a \in \ZZ \slashdiv (p - 1) \ZZ$ we have
\begin{gather*}
  \sum_{k \in a + (p - 1)\ZZ} \phi_{k, M}
\equiv
  0 \pmod{p}
\text{.}
\end{gather*}
\end{remark}

A priori, $\rmJ_{k, M}$ has a basis of Jacobi forms with Fourier series expansions over~$\CC$.  Our final result in this section shows that there exists a basis with integral Fourier series coefficients, which is important for arithmetic applications.
\begin{theorem}
\label{thm:generators-of-rings-of-jacobi-forms}
There exists a basis of $\rmJ_{k, M}$ with integral Fourier series coefficients.  That is, for all $k$ and $M$, we have $\rmJ_{k, M} = \rmJ_{k, M}(\ZZ) \otimes \CC$.
\end{theorem}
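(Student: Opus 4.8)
The assertion is equivalent to the existence of a $\QQ$-structure, $\rmJ_{k, M} = \rmJ_{k, M}(\QQ) \otimes_\QQ \CC$: granting this, for $\phi \in \rmJ_{k, M}(\QQ)$ the components of $\Theta_M(\phi)$ are elliptic modular forms with rational Fourier coefficients and therefore have bounded denominators, hence so does $\phi$, and clearing denominators turns a $\QQ$-rational basis into a basis with coefficients in $\ZZ$. The plan is to transport the question through the theta decomposition~\eqref{eq:theta-decomposition}. Since $\Theta_M$ is a $\CC$-linear isomorphism with $\{c(\phi; n, r)\} = \{c(\Theta_M(\phi); n, r)\}$, a Jacobi form has Fourier coefficients in a subring $R \subseteq \CC$ exactly when its image does, so it suffices to produce a $\QQ$-structure on $\rmM^{(1)}_{k - \frac{l}{2}}(\check\rho_M)$.

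First I would embed $\rmM^{(1)}_{k - \frac{l}{2}}(\check\rho_M)$ into scalar-valued modular forms on a principal congruence subgroup. The dual Weil representation $\check\rho_M$ factors through a finite quotient $\SL{2}(\ZZ/N\ZZ)$ of $\SL{2}(\ZZ)$ for a suitable level $N$ depending on $M$ (with the customary metaplectic modification when $l$ is odd, which one removes by multiplying every form by a fixed power of a theta constant so that the weight becomes integral, enlarging $N$ if needed). Sending a vector-valued form to the tuple of its components then identifies $\rmM^{(1)}_{k - \frac{l}{2}}(\check\rho_M)$ with the subspace of $\bigoplus \rmM^{(1)}_{k - \frac{l}{2}}(\Gamma(N))$ cut out by the finitely many linear relations expressing $\check\rho_M$-equivariance along coset representatives of $\Gamma(N)$ in $\SL{2}(\ZZ)$. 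By the $q$-expansion principle $\rmM^{(1)}_{k - \frac{l}{2}}(\Gamma(N))$ has a $\QQ(\zeta_N)$-structure, and the matrix entries of $\check\rho_M$ on the standard generators $S$ and $T$ are normalized Gauss sums and roots of unity, hence lie in $\QQ(\zeta_N)$; so the subspace in question is defined by $\QQ(\zeta_N)$-linear equations and inherits a $\QQ(\zeta_N)$-structure.

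It then remains to descend from $\QQ(\zeta_N)$ to $\QQ$, which is the hard part. For $\sigma_a \in \mathrm{Gal}(\QQ(\zeta_N)/\QQ)$, applying $\sigma_a$ to Fourier coefficients preserves $\rmM^{(1)}_{k - \frac{l}{2}}(\Gamma(N), \QQ(\zeta_N))$ and conjugates the $\SL{2}(\ZZ/N\ZZ)$-action by $\sigma_a$; one has to check that $\check\rho_M$ is Galois-stable, i.e.\ that its $\sigma_a$-conjugate is isomorphic to $\check\rho_M$ precomposed with an automorphism of the underlying finite quadratic module (multiplication by a suitable unit on the discriminant group). Granting this, the $\QQ(\zeta_N)$-subspace realizing $\rmM^{(1)}_{k - \frac{l}{2}}(\check\rho_M)$ is stable under $\mathrm{Gal}(\QQ(\zeta_N)/\QQ)$ up to a harmless permutation of components, so by Galois descent it has a $\QQ$-structure; transporting back through $\Theta_M^{-1}$ gives $\rmJ_{k, M} = \rmJ_{k, M}(\QQ) \otimes_\QQ \CC$, and the integral basis follows as above. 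The only real obstacle is this last point about the behavior of the Weil representation under the cyclotomic Galois action — essentially the rationality theorem of McGraw for Weil representations of finite quadratic modules, which one may cite directly or reprove via explicit Gauss-sum identities; alternatively, one could obtain the $\QQ$-structure geometrically, since $\rmJ_{k,M}$ is the space of global sections of a natural line bundle on a modular scheme that is defined over $\QQ$.
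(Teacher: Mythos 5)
Your argument is correct in outline, but it takes a genuinely different route from the paper's. The paper never passes through the theta decomposition: after reducing, exactly as you do, to the existence of a $\QQ$-structure (justified there only by the remark that $\rmJ_{k,M}(\ZZ)$ is torsion free — your bounded-denominators explanation of the final step from $\QQ$ to $\ZZ$ is actually the more complete one), it realizes $\rmJ_{k,M}$ inside the space of formal Fourier series $\rmF\rmJ_{k,M}$ as a finite intersection $\bigcap_{s\in\cS}(\,\cdot\,)[s]^{-1}\big(\rmJ_{k,M[s]}\big)$ of preimages under the restriction maps $\phi\mapsto\phi(\tau,s z')$, invoking Lemma~4.1 and Proposition~4.8 of~\cite{Ra12-special-cycles}. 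Each $\rmJ_{k,M[s]}$ is a space of one-variable Jacobi forms, which has an integral basis by Eichler--Zagier, and since each restriction map is defined over $\ZZ$, the preimage of a $\QQ$-defined subspace is $\QQ$-defined; the finite intersection then furnishes the $\QQ$-structure. This keeps the proof inside the circle of ideas (the operators $(\,\cdot\,)[s]$) that the paper develops anyway in Section~2.3, and it avoids both the metaplectic/half-integral-weight bookkeeping and any Galois descent. Your route instead reduces the theorem to the rationality of $\rmM^{(1)}_{k-l/2}(\check{\rho}_M)$, i.e.\ to McGraw's theorem on Weil representations; you are right that the descent from $\QQ(\zeta_N)$ to $\QQ$ — the Galois-stability of $\check{\rho}_M$ up to an automorphism of the discriminant form — is the one step that cannot be waved through, and citing McGraw is a legitimate way to close it. The trade-off is that your proof is shorter but imports a substantially deeper external input, whereas the paper's is elementary modulo the restriction machinery it already needs.
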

\begin{proof}
If $l=1$, then it is well known (see \cite{EZ}) that $\rmJ_{k, m}$ has a basis with Fourier series coefficients in~$\ZZ$.  For ~$l > 0$ it suffices to show that $\rmJ_{k, M} = \rmJ_{k, M}(\QQ) \otimes \CC$, since $\rmJ_{k, M}(\ZZ)$ is torsion free.  Let $\rmF\rmJ_{k, M}$ be the~$\CC$ vector space of formal Fourier series expansions
\begin{gather*}
  \sum_{\substack{ n \ge 0,\, r \in \ZZ^l \\ 4 \det(M) n - M^{\#}[r] \ge 0 }} \hspace{-1.5em}
  c( \phi;\, 4 \det(M) n - M^{\#}[r], \ov{r})\, q^n \zeta^r
\text{,}
\end{gather*}
where $c( \phi;\, D, \ov{r})$ depends on the reduction~$\ov{r} \pmod{2 M}$ of~$r$.  Note that $\rmJ_{k, M} \subset \rmF\rmJ_{k, M}$ by the map which sends a Jacobi form to its Fourier series expansion.

There are formal restriction maps $(\,\cdot\,)[s]$, that map a formal Fourier series expansion to
\begin{gather*}
  \phi[s]
=
  \sum_{\substack{ n \ge 0,\, r \in \ZZ^l \\ 4 \det(M) n - M^{\#}[r] \ge 0 }} \hspace{-1.5em}
  c( \phi;\, 4 \det(M) n - M^{\#}[r], \ov{r})\, q^n \zeta^{\prime\, \rT s r}
\text{,}
\end{gather*}
where $\zeta' := \exp(2 \pi i\, z')$ with $z' \in \CC$.  Write $(\,\cdot\,)[s]^{-1}$ for the preimage under restriction along~$s$. Lemma~4.1 of~\cite{Ra12-special-cycles} and Proposition~4.8 of~\cite{Ra12-special-cycles} imply that there is a finite set $\cS$ such that
\begin{gather*}
  \bigcap_{s \in \cS} (\,\cdot\,)[s]^{-1}\big( \rmJ_{k, M[s]} \big)
=
  \rmJ_{k, M}
\subset
  \rmF\rmJ_{k, M}
\text{.}
\end{gather*}  
The preimage $(\,\cdot\,)[s]^{-1}$ preserves rationality of Fourier series coefficients, because $(\,\cdot\,)[s]$ is defined over~$\ZZ$.  In other words, we have
\begin{gather*}
  \bigcap_{s \in \cS} (\,\cdot\,)[s]^{-1}\big( \rmJ_{k, M[s]}(\QQ) \big)
=
  \rmJ_{k, M}(\QQ)
\text{.}
\qedhere
\end{gather*}
\end{proof}

\subsection{The module $\rmJ_{\bullet, M}(\ZZ_{(p)})$}
\label{sec:expceptional-set-for-jacobi-forms}
In this section, we show that (for fixed $M$) the $\rmM^{(1)}_\bullet(\ZZ_{(p)})$-module
\begin{gather*}
  \rmJ_{\bullet, M}(\ZZ_{(p)})
=
  \bigoplus_{k \in \ZZ}
  \rmJ_{k, M}(\ZZ_{(p)})
\text{}
\end{gather*}
is free for all~$p\geq 5$.
\begin{definition}
Let $\cP(M)$ be the set of primes~$p$ such that $\rmJ_{\bullet, M}(\ZZ_{(p)})$ is a free module over $\rmM^{(1)}_{\bullet}(\ZZ_{(p)})$.
\end{definition}

We first characterize the set $\cP(M)$ by saturation properties.  Over characteristic~$0$, we say that a submodule $N \subseteq N'$ is $p$-saturated (in $N'$), if for every $\phi \in N'$ with $p \phi\in N$, we have $\phi \in N$.
\begin{proposition}
\label{prop:characterization-of-exceptional-congruence-primes}
We have
\begin{multline*}
  \cP(M)
=
  \Big\{
    p\;\text{prime} \,:\,
    \forall\,k \in \ZZ \,:\,
    \sum_{\substack{k^\rmM + k^\rmJ = k \\[0.1em] k^\rmJ < k}}\!\!
    \rmM^{(1)}_{k^\rmM}(\ZZ_{(p)})\, \rmJ_{k^\rmJ, M}(\ZZ_{(p)})\;
    \text{is $p$-saturated in $\rmJ_{k, M}(\ZZ_{(p)}$})
  \Big\}
\text{.}
\end{multline*}
\end{proposition}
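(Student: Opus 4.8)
The plan is to recast the assertion as a depth computation over the graded ring $R := \rmM^{(1)}_\bullet(\ZZ_{(p)})$. For $p \ge 5$ this is the polynomial ring $\ZZ_{(p)}[E_4, E_6]$, which is regular of Krull dimension $3$; its graded maximal ideal $\frakM := (p, E_4, E_6)$ has residue field $\bbF_p$, and since $R_0 = \ZZ_{(p)}$ is local, the graded Nakayama lemma is available. Let $N := \rmJ_{\bullet, M}(\ZZ_{(p)})$; it is a finitely generated graded $R$-module (Jacobi forms of a fixed index form a finitely generated module over all modular forms), and it is $\ZZ_{(p)}$-torsion-free because $\rmJ_{k, M}(\ZZ_{(p)}) \subset \rmJ_{k, M}(\CC)$ for all $k$. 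The first step is a routine identity: since $\rmM^{(1)}_\bullet(\ZZ_{(p)}) = \ZZ_{(p)}[E_4, E_6]$, the irrelevant ideal $R_{>0} = \bigoplus_{a \ge 1} \rmM^{(1)}_a(\ZZ_{(p)})$ equals $(E_4, E_6)$; writing $N_k^< := \sum_{k^\rmM + k^\rmJ = k,\ k^\rmJ < k} \rmM^{(1)}_{k^\rmM}(\ZZ_{(p)})\, \rmJ_{k^\rmJ, M}(\ZZ_{(p)})$ for the subspace in the statement, one checks $\bigoplus_k N_k^< = R_{>0} N = (E_4, E_6) N$, with $N_k^<$ its weight-$k$ piece. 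Hence the condition that each $N_k^<$ is $p$-saturated in $\rmJ_{k, M}(\ZZ_{(p)})$ is equivalent to $N/(E_4, E_6) N$ being $\ZZ_{(p)}$-torsion-free, i.e.\ to $p$ being a non-zero-divisor on $N/(E_4, E_6) N$. On the other hand, $p \in \cP(M)$ means $N$ is free over $R$; since $N$ is finitely generated and graded over $R$ and $R_0$ is local, this is equivalent to $N$ being \emph{graded} free (a finitely generated graded projective module over such an $R$ is graded free, by graded Nakayama). So the goal is to show: $N$ is graded free over $R$ if and only if $p$ is a non-zero-divisor on $N/(E_4, E_6) N$.

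One direction is formal. If $b_1, \dots, b_r$ is a homogeneous $R$-basis of $N$, with $b_j$ of weight $d_j$, then $(E_4, E_6) N = R_{>0} N = \bigoplus_j R_{>0}\, b_j$, whose weight-$k$ piece is $\bigoplus_{d_j < k} R_{k - d_j}\, b_j$; therefore $\rmJ_{k, M}(\ZZ_{(p)})/N_k^< \cong \bigoplus_{d_j = k} \ZZ_{(p)}\, \ov{b_j}$ is $\ZZ_{(p)}$-free, so $p$ is a non-zero-divisor on it.

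For the converse I would exhibit an $N$-regular sequence of length $3$ inside $\frakM$ and invoke the Auslander--Buchsbaum formula. The crucial claim is that $E_4, E_6$ is an $N$-regular sequence. Multiplication by $E_4$ on $N$ is injective by a leading-term argument: the constant Fourier coefficient of $E_4$ is $1$, so if $0 \ne \phi \in N$ and $n_0$ is minimal with $c(\phi; n_0, r_0) \ne 0$ for some $r_0$, then the $(n_0, r_0)$-coefficient of $E_4 \phi$ equals $c(\phi; n_0, r_0) \ne 0$. Multiplication by $E_6$ on $N/E_4 N$ is injective by the divisor geometry of $E_4, E_6$: if $\phi \in \rmJ_{k, M}(\ZZ_{(p)})$ and $E_6 \phi = E_4 \psi$ with $\psi \in N$, then $\phi/E_4 = \psi/E_6$ has no poles on $\HS \times \CC^l$ — because $E_4$ and $E_6$ have no common zero on $\HS$ (their only zeros being the distinct points $\rho = e^{2\pi i/3}$ and $i$) — so it is a holomorphic Jacobi form of weight $k - 4$ and index $M$; its Fourier expansion equals $\phi \cdot E_4^{-1}$, and $E_4^{-1} \in \ZZ[[q]]$ (again because $E_4$ has an invertible constant term), so this expansion is supported on non-negative $q$-powers and has $\ZZ_{(p)}$-integral coefficients; hence $\phi/E_4 \in \rmJ_{k - 4, M}(\ZZ_{(p)})$ and $\phi \in E_4 N$. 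Granting the claim, the hypothesis that $p$ is a non-zero-divisor on $N/(E_4, E_6) N$ says precisely that $E_4, E_6, p$ is an $N$-regular sequence; all three elements lie in $\frakM$, so $\mathrm{depth}_\frakM N \ge 3$, and since $R$ is regular (so $\mathrm{pd}_R N < \infty$) the graded Auslander--Buchsbaum formula gives $\mathrm{pd}_R N = \mathrm{depth}_\frakM R - \mathrm{depth}_\frakM N = 3 - \mathrm{depth}_\frakM N \le 0$, so $\mathrm{pd}_R N = 0$. Thus $N$ is projective, hence graded free, and $p \in \cP(M)$.

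The background facts used are standard: $\rmM^{(1)}_\bullet(\ZZ_{(p)}) = \ZZ_{(p)}[E_4, E_6]$ for $p \ge 5$; finite generation of $\rmJ_{\bullet, M}(\ZZ_{(p)})$ over $R$; finite-dimensionality of the spaces of modular and Jacobi forms; the divisor structure of $E_4$ and $E_6$; and the graded forms of Nakayama's lemma and the Auslander--Buchsbaum formula. The step I expect to be the genuine obstacle is proving that $E_4, E_6$ is an $N$-regular sequence — concretely, that $E_6 \phi \in E_4 N$ forces $\phi \in E_4 N$ — because it is precisely here that one must combine the complex-analytic input ($E_4$ and $E_6$ have no common zero, so division keeps the form holomorphic) with the arithmetic input ($E_4$ has a unit constant term, so division keeps the coefficients $p$-integral). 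Everything else is commutative-algebra bookkeeping.
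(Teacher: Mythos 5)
Your argument takes a genuinely different route from the paper's. The paper proves the nontrivial inclusion by a weight-by-weight recursion: the saturation hypothesis lets one split off a $\ZZ_{(p)}$\nbd complement of $\sum_{k^\rmJ < k} \rmM^{(1)}_{k - k^\rmJ}(\ZZ_{(p)})\, \rmJ_{k^\rmJ, M}(\ZZ_{(p)})$ inside $\rmJ_{k, M}(\ZZ_{(p)})$ in each weight, and the Mason--Marks freeness of $\rmJ_{\bullet, M}$ over $\rmM^{(1)}_\bullet$ is invoked to see that the generators so produced stay independent; no depth or projective-dimension arguments appear. Your reformulation --- identifying the saturation condition with $p$ being a non-zero-divisor on $N \slashdiv (E_4, E_6)N$, proving that $E_4, E_6$ is an $N$\nbd regular sequence via the no-common-zero and unit-constant-term arguments, and closing with graded Nakayama and Auslander--Buchsbaum --- is more structural, and the regular-sequence step (the part you correctly single out as the crux) is argued soundly, including the $p$\nbd integrality of $\phi \slashdiv E_4$ via $E_4^{-1} \in \ZZ[[q]]$.

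There is, however, one load-bearing assertion whose stated justification does not establish what is needed: the finite generation of $N = \rmJ_{\bullet, M}(\ZZ_{(p)})$ over $R = \ZZ_{(p)}[E_4, E_6]$. Your parenthetical cites finite generation of $\rmJ_{\bullet, M}$ over $\rmM^{(1)}_\bullet$, which is a statement over $\CC$; it does not formally imply finite generation of the $p$\nbd integral module, since writing $\phi = \sum_i f_i \phi_i$ against an integral basis could a priori introduce denominators $p^{c(k)}$ with $c(k) \to \infty$ as the weight grows. Both places where you use finite generation --- graded Nakayama (free implies graded free) and the Auslander--Buchsbaum formula --- genuinely require it. The gap is fillable without circularity, but the arguments are not in your write-up: in the direction ``saturation implies free'', the hypothesis itself forces $N_k = N_k^{<}$ once $k$ exceeds the top weight of a rational basis of $N \otimes \QQ$ (for such $k$ the quotient $N_k \slashdiv N_k^{<}$ is a torsion $\ZZ_{(p)}$\nbd module, and $p$\nbd torsion-free by hypothesis, hence zero), so $N$ is generated in bounded weight and therefore finitely generated; in the direction ``free implies saturation'', a free $N$ must have rank at most $\det(2M)$ because $N \otimes \QQ$ does, hence is finitely generated. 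With those two observations inserted, your proof goes through.
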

\begin{proof}
Write $\cP_{sat}$ to denote the set on the right hand side of the claim.  Note that $\cP(M) \subset \cP_{sat}$.  

Let $p\in \cP_{sat}$.  We will show that $\rmJ_{\bullet, M}(\ZZ_{(p)})$ is free, i.e., $p \in \cP(M)$.  We recursively define generators.  Since~$M$ is non-degenerate, we have $\rmJ_{0, M} = \{0\}$.  Fix~$0 < k \in \ZZ$, and suppose that there are algebraically independent (over $\rmM^{(1)}_{\bullet}$) Jacobi forms~$\phi_i \in \rmJ_{k_i, M}(\ZZ_{(p)})$ ($1\leq i\leq d$) with $k_i < k$ that span $\rmJ_{k', M}$ for all $k' < k$.  Then the submodule
\begin{gather*}
 N_p:= \bigoplus_{i = 1}^d \rmM^{(1)}_{k - k_i}(\ZZ_{(p)})\, \phi_i
=
  \sum_{\substack{k^\rmM + k^\rmJ = k \\[0.1em] k^\rmJ < k}}\!\!
  \rmM^{(1)}_{k^\rmM}(\ZZ_{(p)})\, \rmJ_{k^\rmJ, M}(\ZZ_{(p)})
\subseteq
  \rmJ_{k, M}(\ZZ_{(p)})
\end{gather*}
is $p$\nbd saturated, since $p\in\cP_{sat}$.  Hence there exists a $\ZZ_{(p)}$\nbd complement~$W_p$ of~$N_p$ in~$\rmJ_{k, M}(\ZZ_{(p)})$.  Fix a basis of $W_p$, say $\phi_{d + 1}, \ldots, \phi_{d + d'}$.  Note that the elements~$\phi_{1}, \ldots, \phi_{d + d'}$ of~$\rmJ_{\bullet, M}(\ZZ_{(p)})$ generate a free $\rmM^{(1)}_{\bullet}(\ZZ_{(p)})$\nbd submodule of $\rmJ_{\bullet, M}(\ZZ_{(p)})$, since $\rmJ_{\bullet, M}$ is free over~$\rmM^{(1)}_{\bullet}$ (see~\cite{M-M-London10}).  We conclude that $\rmJ_{\bullet, M}(\ZZ_{(p)})$ is free.
\end{proof}

Theorem~1.1 of Mason and Marks~\cite{M-M-London10} implies that $\rmJ_{\bullet,M}$ is a free module of rank~$\rk_M := \det(2 M)$ over $\rmM^{(1)}_{\bullet}$.  In particular, if $\phi \in \rmJ_{k,M}$, then Theorem~\ref{thm:generators-of-rings-of-jacobi-forms} asserts that there exists $\phi_i \in \rmJ_{k_i,M}(\ZZ)$ and $f_i\in \rmM^{(1)}_{k-k_i}$ ($1\leq i\leq \rk_K$) such that $\phi = \sum_{i=1}^{\rk_M} f_i \phi_i$. The next Proposition addresses the elliptic modular forms $f_i$ in the case that $\rmJ_{\bullet, M}(\ZZ_{(p)})$ is free.

\begin{proposition}
\label{prop:jacobi-p-integral-coefficients}
Let $\phi \in \rmJ_{k,M}(\ZZ_{(p)})$ with $\phi = \sum_{i=1}^{\rk_M} f_i \phi_i$.  If $p \in \cP(M)$, then the elliptic modular forms $f_i$ have also $p$-integral rational coefficients.  Moreover, if $p \in \cP(M)$ and $\psi = \sum_{i=1}^{\rk_M} g_i \phi_i \in \rmJ_{k',M}(\ZZ_{(p)})$ such that $0 \not\equiv \phi \equiv \psi \pmod{p}$, then $f_i \equiv g_i \pmod{p}$.
\end{proposition}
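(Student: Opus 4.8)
The first assertion is equivalent to saying that $\phi_1,\ldots,\phi_{\rk_M}$ form a free basis of $\rmJ_{\bullet,M}(\ZZ_{(p)})$ \emph{over} $\rmM^{(1)}_\bullet(\ZZ_{(p)})$, and not merely over $\rmM^{(1)}_\bullet = \CC[E_4,E_6]$. The plan is as follows. Since $p\in\cP(M)$, the module $\rmJ_{\bullet,M}(\ZZ_{(p)})$ is free over $\rmM^{(1)}_\bullet(\ZZ_{(p)}) = \ZZ_{(p)}[E_4,E_6]$; fix a homogeneous basis $\psi_1,\ldots,\psi_{\rk_M}$ with $\psi_j\in\rmJ_{l_j,M}(\ZZ_{(p)})$. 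By Theorem~\ref{thm:generators-of-rings-of-jacobi-forms} and flatness of $\ZZ_{(p)}$ over $\ZZ$ one has $\rmJ_{\bullet,M}(\ZZ_{(p)})\otimes_{\ZZ_{(p)}}\CC = \rmJ_{\bullet,M}$, so $\{\psi_j\}$ is also an $\rmM^{(1)}_\bullet$\nbd basis of $\rmJ_{\bullet,M}$. Writing $\phi_i = \sum_j P_{ij}\psi_j$, the $P_{ij}$ lie in $\rmM^{(1)}_\bullet(\ZZ_{(p)})$ because $\{\psi_j\}$ is a $\ZZ_{(p)}$-basis and the $\phi_i$ have $p$-integral Fourier coefficients; since both families are $\CC$-bases of $\rmJ_{\bullet,M}$, the matrix $(P_{ij})$ is invertible over $\CC[E_4,E_6]$, so $c := \det(P_{ij})$ is a nonzero constant, hence a nonzero $p$-integral rational. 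If $c\in\ZZ_{(p)}^\times$, then $(P_{ij})^{-1} = c^{-1}\,\mathrm{adj}(P_{ij})$ again has entries in $\rmM^{(1)}_\bullet(\ZZ_{(p)})$, so $\{\phi_i\}$ is a free $\ZZ_{(p)}$-basis and any $\phi\in\rmJ_{k,M}(\ZZ_{(p)})$ expands with $f_i\in\rmM^{(1)}_{k-k_i}(\ZZ_{(p)})$; this is the first assertion.

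Thus the heart of the matter is to prove $p\nmid c$, equivalently that $\{\phi_i\}$ is a $\ZZ_{(p)}$-basis. I would do this by induction on the weight, showing $\rmJ_{k,M}(\ZZ_{(p)}) = \bigoplus_{i:\,k_i\le k}\rmM^{(1)}_{k-k_i}(\ZZ_{(p)})\,\phi_i$ for all $k$. In the inductive step, the part of this sum coming from $k_i<k$ equals $E_4\,\rmJ_{k-4,M}(\ZZ_{(p)}) + E_6\,\rmJ_{k-6,M}(\ZZ_{(p)})$, because $\rmM^{(1)}_{\ge 1}(\ZZ_{(p)})$ is the ideal $(E_4,E_6)$ of $\ZZ_{(p)}[E_4,E_6]$ and one invokes the inductive hypothesis in weights $k-4$ and $k-6$; this is precisely the submodule $N_k$ of Proposition~\ref{prop:characterization-of-exceptional-congruence-primes}. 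Since $p\in\cP(M)$, $N_k$ is $p$\nbd saturated in $\rmJ_{k,M}(\ZZ_{(p)})$, so the latter splits as $N_k\oplus W$ with $W$ a free $\ZZ_{(p)}$-module; using that reduction modulo $p$ is faithful on each $\rmM^{(1)}_n(\ZZ_{(p)})$ and on $\rmJ_{k,M}(\ZZ_{(p)})$ (a form that is $\equiv 0\pmod p$ stays in the same lattice upon division by $p$), a comparison of complex and $\bbF_p$ dimensions shows that $\rk_{\ZZ_{(p)}}W$ equals the number of indices with $k_i=k$ and that the images of the corresponding $\phi_i$ span $W\otimes\CC$. The remaining step — that these images in fact \emph{generate} $W$ over $\ZZ_{(p)}$, i.e.\ that no factor of $p$ is lost here — is the main obstacle, since it fails for a general integral $\CC$-basis and so genuinely uses the way the $\phi_i$ arise; concretely, a hypothetical loss of a factor of $p$ would produce a relation $\sum_i h_i\phi_i\equiv 0\pmod p$ with $h_i\in\rmM^{(1)}_{w-k_i}(\ZZ_{(p)})$ not all $\equiv 0\pmod p$, and one must rule this out by combining Proposition~\ref{prop:jacobi_weight_congruence} with the recursive construction of a $\ZZ_{(p)}$-basis in the proof of Proposition~\ref{prop:characterization-of-exceptional-congruence-primes}.

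For the second assertion, suppose $p\in\cP(M)$ and $0\not\equiv\phi\equiv\psi\pmod p$ with $\phi=\sum_i f_i\phi_i\in\rmJ_{k,M}(\ZZ_{(p)})$ and $\psi=\sum_i g_i\phi_i\in\rmJ_{k',M}(\ZZ_{(p)})$. By Proposition~\ref{prop:jacobi_weight_congruence} one has $k\equiv k'\pmod{p-1}$, and by the first assertion all $f_i$ and $g_i$ have $p$-integral Fourier coefficients. From $\phi-\psi\equiv 0\pmod p$ we get $\phi-\psi=p\eta$ for some $\eta\in\rmJ_{\bullet,M}(\ZZ_{(p)})$; expanding $\eta=\sum_i h_i\phi_i$ in the $\ZZ_{(p)}$-basis $\{\phi_i\}$ provided by the first assertion and comparing coefficients by uniqueness, $f_i-g_i=p\,h_i$ with $h_i\in\rmM^{(1)}_\bullet(\ZZ_{(p)})$, whence $f_i\equiv g_i\pmod p$.
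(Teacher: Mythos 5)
Your reduction of the first assertion to the single claim that $\{\phi_i\}$ is an $\rmM^{(1)}_\bullet(\ZZ_{(p)})$\nbd basis of $\rmJ_{\bullet,M}(\ZZ_{(p)})$ (equivalently, that $p\nmid c$) is correct, and so is your observation that this claim \emph{fails} for a general integral $\CC$\nbd basis (replace $\phi_1$ by $p\phi_1$). But that is exactly where your argument stops: the ``main obstacle'' you name is the entire content of the first assertion, and the inductive scheme you sketch cannot close it, because the statement you are trying to prove is simply not true for every family $\phi_i$ satisfying the hypotheses you invoke. The resolution is that the $\phi_i$ must be \emph{chosen} correctly rather than proved to work: they are (up to denominators prime to $p$, which are harmless) the generators produced recursively in the proof of Proposition~\ref{prop:characterization-of-exceptional-congruence-primes}. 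For that choice one has, for every $k$,
$\sum_i \rmM^{(1)}_{k-k_i}(\ZZ_{(p)})\,\phi_i = \rmJ_{k,M}(\ZZ_{(p)})$,
so this submodule is in particular $p$\nbd saturated; hence if $\phi\equiv 0\pmod{p}$ then $\tfrac1p\phi=\sum_i g_i\phi_i$ with $g_i\in\rmM^{(1)}_{k-k_i}(\ZZ_{(p)})$, and uniqueness of the coefficients in the free module over $\rmM^{(1)}_\bullet$ forces $g_i=\tfrac1p f_i$. This single implication (``$\phi\equiv 0$ forces $f_i\equiv 0$ and, along the way, $f_i\in\rmM^{(1)}_{k-k_i}(\ZZ_{(p)})$'') yields both assertions at once; your change-of-basis matrix and its determinant $c$ are not needed.

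Your treatment of the ``moreover'' part is fine once the first part is in place, except for one small imprecision: when $k\ne k'$ the difference $\phi-\psi$ is not a Jacobi form of a single weight, so you cannot write $\phi-\psi=p\eta$ with $\eta$ in the graded module and expand $\eta$ in the basis. One should first use $k\equiv k'\pmod{p-1}$ (Proposition~\ref{prop:jacobi_weight_congruence}) to replace $\phi$ by $E_{p-1}^{m}\phi$ of weight $k'$, note that $E_{p-1}^m\phi-\psi=\sum_i(E_{p-1}^m f_i-g_i)\phi_i\equiv 0\pmod{p}$, and then apply the first part to conclude $E_{p-1}^m f_i\equiv g_i$, hence $f_i\equiv g_i\pmod{p}$.
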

\begin{proof}
It suffices to show that if $\phi = \sum_{i=1}^{\rk_M} f_i \phi_i\equiv 0 \pmod{p}$, then $f_i \equiv 0 \pmod{p}$ for all~$i$.  Note that $\phi_i \in \rmJ_{k_i,M}(\ZZ)$.  Proposition~\ref{prop:characterization-of-exceptional-congruence-primes} asserts that the module $\sum_{i=1}^{\rk_M} \rmM^{(1)}_{k - k_i}(\ZZ_{(p)})\, \phi_i$ is $p$\nbd saturated.  Hence $\frac{1}{p} \phi = \sum_{i=1}^{\rk_M} \phi_i g_i$ for some $g_i \in \rmM^{(1)}_{k - k_i}(\ZZ_{(p)})$.  The Jacobi forms $\phi_i$ form a basis of $\rmJ_{\bullet, M}(\ZZ_{(p)})$, and we conclude that $\frac{1}{p} f_i = g_i \in \rmM_{k - k_i}(\ZZ_{(p)})$, as desired.
\end{proof}

To determine which primes belong to~$\cP(M)$, we reconsider parts of~\cite{Ra12-special-cycles} in a different spirit.  The space of formal Fourier series expansions of even or odd weight and index~$M$ is given by
\begin{multline*}
  \cF\cE(M)^{{\rm even}/{\rm odd}}
\\
:=
  \Big\{ \sum_{\substack{n \in \ZZ,\, r \in \ZZ^l \\ 4 \det(M) n - M^{\#}[r] \ge 0}}\hspace{-1.5em}
  c(n, r)\, q^n \zeta^r
     \,:\, c(n, r) = c(n + \rT \lambda r + M[\lambda], r + 2 M \lambda ),\,
           c(n, r) = \pm c(n, -r)
  \Big\}
\text{,}
\end{multline*}
where $c(n,r)=c(n, -r)$ in the even case and $c(n,r)=-c(n, -r)$ in the odd case.

For a finite set~$\cS$ of vectors in~$\ZZ^l$, define
\begin{gather}
\label{eq:def:jacobi-restriction-module}
  \cF\cE(M, \cS)^{\rm even}
:=
  \Big( \bigoplus_{s \in \cS} (\,\cdot\,)[s] \Big)
  \big(\cF\cE(M)^{\rm even}\big)
\subset
  \bigoplus_{s \in \cS} \cF\cE(m)^{\rm even}
\text{.}
\end{gather}
The space~$\cF\cE(M, \cS)^{\rm odd}$ is defined analogously.

Let $\cR$ be a set of representatives of~$\ZZ^l \slashdiv {2 M \ZZ^l}$ with the following property:  For every $r \in \cR$ and every $r' \equiv r \pmod{M \ZZ^l}$, we have $M^{-1}[r] \le M^{-1}[r']$.  In~\cite{Ra12-special-cycles} it is shown that the elements in $\cF\cE(M)^{{\rm even}/{\rm odd}}$ are uniquely determined by their coefficients $c(n, r)$ with $r \in \cR$.

\begin{lemma}
\label{la:saturation-and-vanishing}
Fix a prime~$p \ge 5$.  There exists a set~$\cS$ of integral $l$-vectors such that
\begin{enumerate}[(i)]
\item The map $(\,\cdot\,)[\cS]$ is injective.
\item Let $\phi:\HS \times \CC^l\ra\CC$ with Fourier series expansion in~$\cF\cE(M)^{{\rm even}/{\rm odd}}$. Then $\phi = 0$ if and only if~$\phi[s] = 0$ for all~$s \in \cS$.
\item $\cF\cE(M, \cS)^{\rm even}$ and $\cF\cE(M, \cS)^{\rm odd}$ are $p$\nbd saturated.
\end{enumerate}
\end{lemma}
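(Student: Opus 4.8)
The plan is to organize everything around the grading of $\cF\cE(M)^{{\rm even}/{\rm odd}}$ by the discriminant $D = 4\det(M)\,n - M^{\#}[r]$: each graded piece is finite-dimensional and, as recalled just before the lemma, is determined by the coefficients attached to the finitely many representatives $r\in\cR$. Parts~(i) and~(ii) are of the same nature as the identity $\bigcap_s(\,\cdot\,)[s]^{-1}\big(\rmJ_{k,M[s]}\big) = \rmJ_{k,M}$ invoked in the proof of Theorem~\ref{thm:generators-of-rings-of-jacobi-forms}, and may be deduced from Lemma~4.1 and Proposition~4.8 of~\cite{Ra12-special-cycles}; alternatively, they fall out of the explicit construction below, which is the part I would actually write down because it simultaneously yields~(iii).

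I would choose $\cS$ explicitly: for each class $\bar r$ with representative $r_0\in\cR$ (picking any nonzero vector for the zero class), include an integral vector $s = s_{\bar r}$ proportional to $M^{-1}r_0$ after clearing denominators, so that $M s_{\bar r}$ is parallel to $r_0$; since $\cR$ is finite, $\cS$ is finite. The point of this choice is that $r_0$ is then the unique minimizer of the positive definite form $M^{\#}[\,\cdot\,]$ on its fibre $\{r\in\ZZ^l : \rT s r = \rT s r_0\}$, because the linear term $\langle v, M^{\#}r_0\rangle$ vanishes for $v$ in that fibre's lattice $s^{\perp}\cap\ZZ^l$. Setting $r':=\rT s r_0$ and $n_0 := (D+M^{\#}[r_0])/(4\det(M))$ and using the formula for the restriction coefficients,
\[
  c(\phi[s];\, n_0, r')
=
  \sum_{\rT s r = r'} c(\phi;\, n_0, r)
=
  c(\phi;\, n_0, r_0)
  + \sum_{\substack{\rT s r = r'\\ r \ne r_0}} c(\phi;\, n_0, r),
\]
the term $r = r_0$ has discriminant exactly $D$, while every other $r$ in the fibre has discriminant $4\det(M)\,n_0 - M^{\#}[r] = D - M^{\#}[r-r_0] < D$. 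Hence $c(\phi;n_0,r_0)$, the discriminant-$D$ coefficient in the class $\bar r$, is recovered from $c(\phi[s];n_0,r')$ and coefficients of strictly smaller discriminant with a multiplier equal to $1$. An induction on $D$, over all classes simultaneously, then shows that $\phi\mapsto(\phi[s])_{s\in\cS}$ is injective on $\cF\cE(M)^{{\rm even}/{\rm odd}}$ — and, because every multiplier is $1$, injective after reduction modulo~$p$ as well. This gives~(i) and~(ii), and — combined with the elementary fact that a submodule $N\subseteq N'$ of torsion-free $\ZZ_{(p)}$-modules is $p$-saturated exactly when $N\otimes\bbF_p\to N'\otimes\bbF_p$ stays injective, applied to $N=\cF\cE(M,\cS)^{\rm even}\cong\cF\cE(M)^{\rm even}$ inside $N' = \bigoplus_{s\in\cS}\cF\cE(M[s])^{\rm even}$ — also~(iii); the odd case is identical.

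The main obstacle is the passage to characteristic~$p$ needed for~(iii): the restriction coefficient $c(\phi[s];n,r') = \sum_{\rT s r = r'}c(\phi;n,r)$ is a sum over collisions, and over $\bbF_p$ such a sum can vanish without its summands vanishing, so an $\cS$ adequate over $\CC$ need not be adequate modulo~$p$. The construction above is designed precisely so that one never has to invert a full collision sum — the largest-discriminant contribution to each relevant restriction coefficient is a single, multiplicity-one term, and everything else is confined to strictly smaller discriminant, where the induction has already done its work. The two points that require care are the verification that $\phi[s]$ genuinely lies in $\cF\cE(M[s])^{{\rm even}/{\rm odd}}$ (so that $\cF\cE(M,\cS)^{{\rm even}/{\rm odd}}$ really is a submodule of the stated ambient module — this rests on the translation $r\mapsto r+2Ms$, which matches fibres over $r'$ and over $r'+2M[s]$) and the claim that $r_0$ minimizes $M^{\#}$ on its fibre without ties. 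The hypothesis $p\ge 5$ enters, besides being standing throughout, in keeping the even and odd parts of $\cF\cE(M)$ uncoupled modulo~$p$, which for $p=2$ would fail since the two symmetry types then coincide.
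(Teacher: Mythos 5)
Your argument is correct, but it is built on a different construction of $\cS$ than the paper's. The paper takes part~(ii) from Corollary~4.7 of~\cite{Ra12-special-cycles}, enlarges $\cR$ to the set $\ov{\cR}$ of all vectors realizing the minimal value of $M^{\#}$ in their class, and then uses a single separating vector $s = \rT(1, 4b, \ldots, (4b)^{l-1})$ from Lemma~\ref{la:jacobi_restriction_vector_for_r} (with $b$ bounding a box containing $\ov{\cR}$); saturation is proved by taking $n_0$ minimal with a coefficient $c(\psi;n_0,r_0)\not\equiv 0 \pmod p$, observing that the coset relations force $r_0\in\ov{\cR}$ and kill every other contribution to the collision sum. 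You instead take one vector per class, $s_{\bar r}\propto M^{-1}r_0$, so that the orthogonality $\rT v M^{\#} r_0 = 0$ on the fibre lattice makes $r_0$ the strict minimizer of $M^{\#}$ (equivalently the strict maximizer of the discriminant) in its fibre, and you induct on the discriminant. Both proofs hinge on the same mechanism — arranging that the top term of each collision sum $\sum_{\rT s r = r'} c(\phi;n,r)$ occurs with multiplicity one while all other terms are already known to vanish modulo $p$ — but your $\cS$ is tailored per coset rather than generic, which lets you derive (i), (ii), and (iii) uniformly from one computation without invoking the external corollary, at the cost of a slightly larger $\cS$ ($|\cR|$ vectors rather than one plus whatever (ii) requires) and of having to verify the orthogonality claim. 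Your reformulation of $p$\nbd saturation as injectivity of $\cF\cE(M)^{\rm even}\otimes\bbF_p \to \bigoplus_s \cF\cE(M[s])^{\rm even}\otimes\bbF_p$ is the right way to package (iii). One small quibble: the closing remark that $p\ge 5$ is needed to keep the even and odd parts uncoupled is a red herring — the two symmetry types are treated as separate spaces throughout, and nothing in this lemma uses $p\ge 5$ beyond its being a standing hypothesis of the paper.
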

\begin{proof}
The second part is given by Corollary~4.7 of~\cite{Ra12-special-cycles}.  To show the first and third parts, we write
\begin{gather*}
  \ov{\cR}
:=
  \big\{ r' \in \ZZ^l \,:\,
     \exists r \in \cR \,:\,
     r \equiv \pm r' \pmod{2 M \ZZ^l},\, M^{\#}[r] = M^{\#}[r']
  \big\}
\text{.}
\end{gather*}
Choose $b \in \ZZ$ such that $|r_i| < b$ for all $r=\rT(r_1, \ldots, r_l)\ \in \ov{\cR}$.  Lemma~\ref{la:jacobi_restriction_vector_for_r} provides a set~$\cS$ such that for all~$r \in \ov{\cR}$ there exists an~$s \in \cS$ such that $\rT s r = \rT s r'$ for some $r' \in \cR$ implies~$r = r'$.  Thus, the first part holds.

We prove the third part only for even weights, and the case of odd weights is established in exactly the same way.  Suppose that there exists a $\phi \in \cF\cE(M, \cS)^{\rm even}$ such that $\tfrac{1}{p}\phi \not\in \cF\cE(M, \cS)^{\rm even}$, but $\tfrac{1}{p}\phi \in \bigoplus_s \cF\cM(M[s])^{\rm even}$.  Fix a preimage~$\psi \in \cF\cM(M)^{\rm even}$ of $\phi$ under~$\bigoplus_s (\,\cdot\,)[s]$.  By assumption, $\tfrac{1}{p}\psi \not\in \cF\cM(M)^{\rm even}$.  As usual, denote the Fourier series coefficients of $\psi$ by $c(\psi;\, n, r)$.   Let~$n_0$ be minimal subject to the condition that there exists an $r_0$ with $c(\psi;\, n_0, r_0) \not\equiv 0 \pmod{p}$.  The definitions of $\cR$ and~$\cF\cM(M)^{\rm even}$ imply that $c(\psi;\, n_0, r) \equiv 0 \pmod{p}$ for all $r \not\in \ov{\cR}$, and hence $r_0 \in \ov{\cR}$.  Moreover, there exists an $s \in \cS$ such that $c(\psi[s]; n_0, \rT s r_0) \equiv c(\psi; n_0, r_0) \pmod{p}$, which gives the contradiction that $\tfrac{1}{p}\phi \not\in \bigoplus_s \cF\cE(m)^{\rm even}$.  This proves the third part for even weights.
\end{proof}
\begin{corollary}
\label{cor:jacobi-restriction-isomorphism}
Consider the map
\begin{gather*}
  (\,\cdot\,)[\cS]
=
  \bigoplus_{s \in \cS} (\,\cdot\,)[s] :\,
  \cF\cE(M)^{\rm even}
\longrightarrow
  \bigoplus_{s \in \cS} \cF\cE(M[s])^{\rm even}
\text{.}
\end{gather*}
Identify $\ZZ_{(p)}$-modules of Jacobi forms with the associated modules of Fourier series expansions.  For any~$\cS$ as in Lemma~\ref{la:saturation-and-vanishing}, we have an $\rmM^{(1)}_{\bullet}(\ZZ_{(p)})$-isomorphism
\begin{gather}
\label{eq:module-isomorphism}
  \rmJ_{2\bullet, M}(\ZZ_{(p)})
=
  (\,\cdot\,)[\cS]^{-1}
  \Big(
  \bigoplus_{s \in \cS} \rmJ_{2 \bullet, M[s]}(\ZZ_{(p)})
  \Big)
\text{.}
\end{gather}

An analogous statement holds for $\rmJ_{2 \bullet + 1, M}(\ZZ_{(p)})$.
\end{corollary}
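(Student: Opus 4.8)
The plan is to prove the asserted identity by a double inclusion of $\rmM^{(1)}_\bullet(\ZZ_{(p)})$-modules, with all the relevant spaces viewed inside $\cF\cE(M)^{\rm even}$, resp.\ inside $\bigoplus_{s\in\cS}\cF\cE(M[s])^{\rm even}$ via $(\,\cdot\,)[\cS]$. The inclusion ``$\subseteq$'' is immediate: a Jacobi form of index $M$ restricts along each $s$ to a Jacobi form of index $M[s]$ with the same coefficient ring (as recorded just before Proposition~\ref{prop:jacobi_restriction_vector_for_r}), so $(\,\cdot\,)[\cS]$ sends $\rmJ_{2\bullet, M}(\ZZ_{(p)})$ into $\bigoplus_{s}\rmJ_{2\bullet, M[s]}(\ZZ_{(p)})$. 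Since each $(\,\cdot\,)[s]$ is defined over $\ZZ$ and affects the modular variable only through $q$, it commutes with multiplication by $\rmM^{(1)}_\bullet(\ZZ_{(p)})$; hence both sides of the asserted identity are $\rmM^{(1)}_\bullet(\ZZ_{(p)})$-submodules, and it remains to prove ``$\supseteq$'' as sets.

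For ``$\supseteq$'' I would combine three facts: (a) the rational restriction principle from the proof of Theorem~\ref{thm:generators-of-rings-of-jacobi-forms} (Lemma~4.1 and Proposition~4.8 of~\cite{Ra12-special-cycles}), asserting, for each weight $k$, that $\bigcap_{s\in\cS}(\,\cdot\,)[s]^{-1}\bigl(\rmJ_{k, M[s]}(\QQ)\bigr) = \rmJ_{k, M}(\QQ)$; (b) injectivity of $(\,\cdot\,)[\cS]$ from Lemma~\ref{la:saturation-and-vanishing}(i); and (c) $p$-saturation of $\cF\cE(M, \cS)^{\rm even}$ in $\bigoplus_s\cF\cE(M[s])^{\rm even}$ from Lemma~\ref{la:saturation-and-vanishing}(iii). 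So let $\phi\in\cF\cE(M)^{\rm even}$ with $\phi[s]\in\rmJ_{2\bullet, M[s]}(\ZZ_{(p)})$ for all $s\in\cS$. Applying (a) in each of the finitely many weights occurring among the $\phi[s]$, and using that membership in $\cF\cE(M)^{\rm even}$ selects the even-weight summands, one gets $\phi\in\rmJ_{2\bullet, M}(\QQ)$; in particular $\phi$ now has $\QQ$-rational Fourier coefficients. A standard saturation argument then upgrades this to $p$-integrality: if $\phi = p^{-a}\phi'$ with $\phi'\in\cF\cE(M)^{\rm even}$ $p$-integral and primitive modulo $p$ and $a\geq 1$, then $(\,\cdot\,)[\cS](\phi') = p^{a}(\,\cdot\,)[\cS](\phi)$ is divisible by $p$ in $\bigoplus_s\cF\cE(M[s])^{\rm even}$, so $p$-saturation~(c) together with injectivity~(b) forces $p^{-1}\phi'\in\cF\cE(M)^{\rm even}$, contradicting primitivity; hence $a=0$ and $\phi$ is $p$-integral. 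Being a $\QQ$-rational Jacobi form of index $M$ and even weight with $p$-integral Fourier coefficients, $\phi$ lies in $\rmJ_{2\bullet, M}(\ZZ_{(p)})$. The odd-weight statement is obtained identically, using $\cF\cE(M)^{\rm odd}$ and the odd-weight halves of (a) and of Lemma~\ref{la:saturation-and-vanishing}.

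The step I expect to be delicate is invoking (a) for the particular $\cS$ supplied by Lemma~\ref{la:saturation-and-vanishing}: one must be sure that restriction along this combinatorially chosen set genuinely detects membership in $\rmJ_{\bullet, M}$ --- i.e.\ that $\cS$ meets the hypotheses of Proposition~4.8 of~\cite{Ra12-special-cycles}, not merely that $(\,\cdot\,)[\cS]$ is injective --- and that passing from the fixed-weight version of the principle to the finitely-many-weights version causes no trouble. I would handle this by observing that the separation property of $\cS$ with respect to $\ov{\cR}$ that yields Lemma~\ref{la:saturation-and-vanishing}(i) is exactly the one required, enlarging $\cS$ if necessary by the vectors furnished in the proof of Theorem~\ref{thm:generators-of-rings-of-jacobi-forms}; a short check shows that appending vectors to $\cS$ preserves properties (i)--(iii) of Lemma~\ref{la:saturation-and-vanishing} (using the injectivity already present). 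The $\QQ$-to-$\ZZ_{(p)}$ step, by contrast, is the routine saturation argument sketched above.
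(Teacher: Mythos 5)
Your proposal is correct and follows essentially the same route as the paper: both establish the characteristic-zero identity via the restriction principle of Lemma~4.1 and Proposition~4.8 of~\cite{Ra12-special-cycles} (using properties (i) and (ii) of Lemma~\ref{la:saturation-and-vanishing}), and then upgrade to $\ZZ_{(p)}$-coefficients using the $p$-saturation of $\cF\cE(M,\cS)^{\rm even}$ from property (iii), with the compatibility of $(\,\cdot\,)[s]$ with multiplication by elliptic modular forms giving the module statement. Your explicit $\phi = p^{-a}\phi'$ saturation argument and your caution about whether the particular $\cS$ of Lemma~\ref{la:saturation-and-vanishing} meets the hypotheses of the restriction principle are slightly more detailed than the paper's treatment, but they do not change the substance of the argument.
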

\begin{proof}
As before, we only consider the case of even weights.  The first and second property in Lemma~\ref{la:saturation-and-vanishing} can be used (see Section~4 of~\cite{Ra12-special-cycles} for details) to show that
\begin{gather*}
  \rmJ_{2 \bullet, M}
=
  (\,\cdot\,)[\cS]^{-1}
  \Big(
  \bigoplus_{s \in \cS} \rmJ_{2 \bullet, M[s]}
  \Big)
\text{.}
\end{gather*}

We have to study $p$\nbd integrality under $(\,\cdot\,)[\cS]$ in order to show the equality in \eqref{eq:module-isomorphism}.  Note that restrictions of formal Fourier series expansions with $p$\nbd integral coefficients have $p$\nbd integral coefficients.  Hence the left hand side of \eqref{eq:module-isomorphism} is contained in the right hand side.  To see the opposite inclusion, suppose there was a $\phi \in \rmJ_{2 \bullet, M}(\ZZ_{(p)})$ whose restriction $\phi[\cS]$ is divisible by~$p$.  Then $\phi$ itself is divisible by~$p$, because $\cF\cE(M, \cS)^{\rm even}$ is $p$\nbd saturated by the third property in Lemma~\ref{la:saturation-and-vanishing}.  This proves the equality.

The following equation shows that it gives an isomorphism of~$\rmM^{(1)}_\bullet(\ZZ_{(p)})$\nbd modules.  Given formal Fourier series expansions $\sum_{n, r} c(\phi;\, n, r)\, q^n \zeta^r$ and $\sum_n c(f;\, n)\, q^n$, we have
\begin{gather*}
  (f \phi)[s]
=
  \sum_{n', n, r} c(f;\, n')\, c(\phi;\, n, r)\, q^{n + n'} \zeta^{\rT s r}
=
  f\, \phi[s]
\text{.}
\qedhere
\end{gather*}
\end{proof}

\begin{theorem}
\label{thm:exception-set-is-trivial}
For every~$M$, we have~$\cP(M) \subseteq \{2, 3\}$.
\end{theorem}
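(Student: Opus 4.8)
The goal is to prove that $\cP(M)$, the set of primes $p$ for which $\rmJ_{\bullet, M}(\ZZ_{(p)})$ is free over $\rmM^{(1)}_\bullet(\ZZ_{(p)})$, contains all primes $p \ge 5$. Equivalently, by Proposition~\ref{prop:characterization-of-exceptional-congruence-primes}, we must show for every prime $p \ge 5$ and every weight $k$ that the submodule $N_p := \sum_{k^\rmM + k^\rmJ = k,\, k^\rmJ < k} \rmM^{(1)}_{k^\rmM}(\ZZ_{(p)})\, \rmJ_{k^\rmJ, M}(\ZZ_{(p)})$ is $p$-saturated in $\rmJ_{k, M}(\ZZ_{(p)})$.

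\begin{proof}[Proof sketch]
Fix a prime $p \ge 5$. We must show $p \in \cP(M)$, i.e., that for each $k$ the submodule $N_p \subseteq \rmJ_{k, M}(\ZZ_{(p)})$ above is $p$-saturated. The strategy is to reduce this saturation statement on Jacobi forms of index $M$ to the corresponding statement for index-$1$ Jacobi forms via the restriction maps $(\,\cdot\,)[\cS]$, where for the known index-$1$ case one has an explicit basis and can invoke Proposition~\ref{prop:improved-Sofer} (i.e.\ Sofer's argument). Concretely, choose a set $\cS$ as in Lemma~\ref{la:saturation-and-vanishing}, so that $(\,\cdot\,)[\cS]$ is injective, detects vanishing, and has $p$-saturated image $\cF\cE(M,\cS)^{\mathrm{even}/\mathrm{odd}}$. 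By Corollary~\ref{cor:jacobi-restriction-isomorphism} we have the $\rmM^{(1)}_\bullet(\ZZ_{(p)})$-module identification $\rmJ_{\bullet, M}(\ZZ_{(p)}) = (\,\cdot\,)[\cS]^{-1}\big( \bigoplus_{s\in\cS} \rmJ_{\bullet, M[s]}(\ZZ_{(p)}) \big)$ (in each parity separately).

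Now suppose $\phi \in \rmJ_{k, M}(\ZZ_{(p)})$ with $p\phi \in N_p$; we want $\phi \in N_p$. Since $\rmJ_{\bullet, M}$ is free over $\rmM^{(1)}_\bullet$ of rank $\rk_M = \det(2M)$, pick a basis $\phi_1, \dots, \phi_{\rk_M}$ with $\phi_i \in \rmJ_{k_i, M}(\ZZ)$ (this uses Theorem~\ref{thm:generators-of-rings-of-jacobi-forms}) that witnesses the recursion in Proposition~\ref{prop:characterization-of-exceptional-congruence-primes}, and write $\phi = \sum_i f_i \phi_i$ with $f_i \in \rmM^{(1)}_{k - k_i}$ (a priori only with complex coefficients). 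Applying $(\,\cdot\,)[s]$ gives $\phi[s] = \sum_i f_i\, \phi_i[s]$ with $\phi_i[s] \in \rmJ_{k_i, M[s]}(\ZZ)$. Since $p\phi \in N_p \subseteq \rmJ_{k,M}(\ZZ_{(p)})$, its restriction $p\,\phi[s]$ lies in $\rmJ_{k, M[s]}(\ZZ_{(p)})$, and the index-$1$ (more precisely, rank-one-index, i.e.\ $l=1$) module $\rmJ_{\bullet, M[s]}(\ZZ_{(p)})$ is free over $\rmM^{(1)}_\bullet(\ZZ_{(p)})$ for all $p \ge 5$ — this follows from Proposition~\ref{prop:improved-Sofer} together with the explicit Eichler--Zagier basis of weak Jacobi forms, exactly the $l=1$ instance of what we are proving. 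Hence applying Proposition~\ref{prop:jacobi-p-integral-coefficients} in the $l = 1$ case: each $f_i$, viewed through the restrictions, has $p$-integral rational coefficients. One then concludes $f_i \in \rmM^{(1)}_{k-k_i}(\ZZ_{(p)})$ directly (an elliptic modular form all of whose restrictions-of-coefficients are $p$-integral is itself $p$-integral, since the $f_i$ are intrinsic). Consequently $\phi = \sum_i f_i \phi_i$ with $f_i \in \rmM^{(1)}_{k-k_i}(\ZZ_{(p)})$; grouping the terms with $k_i < k$ and noting that a term with $k_i = k$ would force $f_i$ constant and contributes to $W_p$, the $p$-divisibility $p \phi \in N_p$ forces the $W_p$-component to vanish mod $p$, hence $\phi \in N_p$. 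This establishes $p$-saturation, so $p \in \cP(M)$, giving $\cP(M) \supseteq \{p : p \ge 5\}$ and therefore $\cP(M) \subseteq \{2, 3\}$.

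The main obstacle is the careful bookkeeping in passing between index $M$ and the restricted indices $M[s]$: one must ensure that the set $\cS$ can be chosen simultaneously to (a) make $(\,\cdot\,)[\cS]$ injective and vanishing-detecting, (b) preserve the Fourier coefficients relevant to the minimal-weight recursion so that algebraic independence and spanning are not lost under restriction, and (c) keep the image $p$-saturated — all of which is arranged by Lemma~\ref{la:saturation-and-vanishing} and Corollary~\ref{cor:jacobi-restriction-isomorphism}, but assembling them into a clean deduction that the $f_i$ are $p$-integral, rather than merely that their restrictions behave well, requires invoking that the decomposition $\phi = \sum f_i \phi_i$ is the \emph{same} decomposition before and after restriction (which holds because $(f\phi)[s] = f\,\phi[s]$). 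A secondary point requiring care is the base case and the inductive structure: one runs the recursion of Proposition~\ref{prop:characterization-of-exceptional-congruence-primes} on $k$, and at each stage the freeness of $\rmJ_{\bullet, M}$ over $\rmM^{(1)}_\bullet$ from Mason--Marks~\cite{M-M-London10} guarantees the generators $\phi_i$ can be chosen consistently.
\end{proof}
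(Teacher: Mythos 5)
Your high-level strategy --- reduce the saturation statement for index $M$ to the classical case $l=1$ via the restriction maps $(\,\cdot\,)[\cS]$ of Lemma~\ref{la:saturation-and-vanishing} and Corollary~\ref{cor:jacobi-restriction-isomorphism} --- is exactly the paper's, and you name the right ingredients. But the central deduction as you have written it does not go through. You write $\phi = \sum_i f_i \phi_i$ for a basis $\phi_1,\dots,\phi_{\rk_M}$ of $\rmJ_{\bullet,M}$ over $\rmM^{(1)}_\bullet$ and claim that applying Proposition~\ref{prop:jacobi-p-integral-coefficients} ``in the $l=1$ case'' to $\phi[s]=\sum_i f_i\,\phi_i[s]$ shows each $f_i$ is $p$-integral. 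That proposition only applies when the forms multiplying the $f_i$ are a basis of the free module $\rmJ_{\bullet,M[s]}(\ZZ_{(p)})$ over $\rmM^{(1)}_\bullet(\ZZ_{(p)})$, and the restricted forms $\phi_i[s]$ are not: there are $\det(2M)$ of them while $\rmJ_{\bullet,M[s]}$ has rank $2M[s]$, and they need neither be independent over $\rmM^{(1)}_\bullet$ nor generate. Re-expanding each $\phi_i[s]$ in a genuine basis $\psi_j$ of $\rmJ_{\bullet,M[s]}(\ZZ_{(p)})$ only tells you that certain $\rmM^{(1)}_\bullet(\ZZ_{(p)})$-linear combinations $\sum_i f_i h_{ij}^{(s)}$ are $p$-integral; to recover $p$-integrality of the individual $f_i$ you would have to invert this system $p$-integrally, i.e.\ know that the image of the relevant map has $p$-torsion-free cokernel --- which is essentially the saturation statement you are trying to prove. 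Your parenthetical that ``the $f_i$ are intrinsic'' addresses uniqueness, not this inversion problem; and note that $p$-integrality of the $f_i$ for index $M$ is precisely the content of Proposition~\ref{prop:jacobi-p-integral-coefficients}, whose hypothesis is $p\in\cP(M)$, so the argument as written is close to circular. (A secondary point: choosing a basis ``witnessing the recursion'' of Proposition~\ref{prop:characterization-of-exceptional-congruence-primes} already presupposes saturation in lower weights, so an explicit induction on $k$ is needed; this is fixable but should be said.)

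The paper avoids tracking coefficients altogether. Under $(\,\cdot\,)[\cS]$, the submodule $\sum_{k^\rmM+k^\rmJ=k,\,k^\rmJ<k}\rmM^{(1)}_{k^\rmM}(\ZZ_{(p)})\,\rmJ_{k^\rmJ,M}(\ZZ_{(p)})$ is identified with the intersection of $\cF\cE(M,\cS)^{\rm even}\cup\cF\cE(M,\cS)^{\rm odd}$ with $\bigoplus_{s\in\cS}\sum_{k^\rmM+k^\rmJ=k,\,k^\rmJ<k}\rmM^{(1)}_{k^\rmM}(\ZZ_{(p)})\,\rmJ_{k^\rmJ,M[s]}(\ZZ_{(p)})$ inside $\bigoplus_{s\in\cS}\cF\cE(M[s])$. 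The first of these is $p$-saturated by part (iii) of Lemma~\ref{la:saturation-and-vanishing}, the second is $p$-saturated summand by summand because of the $l=1$ case, and an intersection of $p$-saturated submodules is $p$-saturated. You cite Lemma~\ref{la:saturation-and-vanishing} for ``keeping the image $p$-saturated'' in your list of obstacles, but its part (iii) is never actually used in your main argument; this module-level intersection is the missing idea that replaces the unworkable coefficient-by-coefficient step.
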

\begin{proof}
Recall that $p \ge 5$.  The case~$l = 1$ is classical (see~\cite{EZ} and~\cite{Sof-JNT97}).  Assume that~$l > 1$.  Choose~$\cS$ as in Lemma~\ref{la:saturation-and-vanishing}.  We apply the isomorphism given in Corollary~\ref{cor:jacobi-restriction-isomorphism}.  We have to prove that
\begin{gather*}
  \sum_{\substack{k^\rmM + k^\rmJ = k \\ k^\rmJ < k}}\!\!
  \rmM_{k^\rmM}(\ZZ_{(p)})\, \rmJ_{k^\rmJ, M}(\ZZ_{(p)})
\subseteq
  \rmJ_{k, M}(\ZZ_{(p)})
\end{gather*}
is a $p$\nbd saturated submodule for every~$k$.  Using the above isomorphism, this amounts to showing that
\begin{gather*}
  \big( \cF\cE(M, \cS)^{\rm even} \cup \cF\cE(M, \cS)^{\rm odd} \big)
  \;\cap\;
  \Big(
  \bigoplus_{s \in \cS}
  \sum_{\substack{k^\rmM + k^\rmJ = k \\ k^\rmJ < k}}\!\!
  \rmM_{k^\rmM}(\ZZ_{(p)})\, \rmJ_{k^\rmJ, M[s]}(\ZZ_{(p)})
  \Big)
\end{gather*}
is $p$\nbd saturated.  Lemma~\ref{la:saturation-and-vanishing} implies that the first module in the intersection is $p$\nbd saturated.  Moreover, each term in the direct sum of the second module is $p$\nbd saturated due to the case~$l = 1$ of this theorem.  The intersection of two $p$\nbd saturated modules is $p$\nbd saturated, and the claim follows for all~$l$.
\end{proof}

\subsection{Heat cycles and $U_p$ congruences}
\label{sec:HeatcyclesandUcongruences}
In this section, we investigate heat cycles of Jacobi forms, and we determine conditions for $U_p$ congruences of Jacobi forms.   Consider the heat operator
\begin{gather*}
  \bbL
:=
  \bbL_M
:=
  \frac{1}{(2\pi i)^2}
  \left(
  8 \pi i \det(M) \partial_\tau - M^{\#} [ \partial_z ]
  \right)
\text{.}
\end{gather*}
Note that if $l=1$ and $M =m$, then $\bbL_M=\frac{1}{(2 \pi i)^2}\left(8 \pi im\frac{\partial}{\partial\tau}-\frac{\partial^2}{\partial z^2}\right)$ is the usual heat operator.  If $\phi \in \rmJ_{k,M}$, then a direct computation (see also Lemma 3.3 of \cite{Ch-Kim}) shows that
\begin{gather}
\label{heat-trans}
  \bbL_M ( \phi )
=
  \frac{(2k - l)\,\det(M)}{6}\,\phi E_2 + \widehat{\phi}
\text{,}
\end{gather}
where $E_2$ is the quasimodular Eisenstein series of weight $2$ and where $\widehat{\phi} \in \rmJ_{k+2,M}$.

Tate's theory of theta cycles (see $\S7$ of \cite{Joch-1982}) relies on  Lemma~5 of~\cite{SwD-l-adic}, which gives the filtration of the theta operator applied to a modular form.  Proposition 2 of \cite{heat} extends Lemma~5 of~\cite{SwD-l-adic} to Jacobi forms on $\HS \times \CC$, and our next proposition extends this further to Jacobi forms on $\HS \times \CC^l$. 

\begin{proposition}
\label{prop: filtration-of-heat-application}
If $\phi \in \rmJ_{k,M}(\ZZ_{(p)})$, then $\bbL(\phi) \pmod{p}$ is the reduction of a Jacobi form modulo~$p$.  Moreover, we have
\begin{gather}
  \omega\big(\bbL_m(\phi)\big)
\leq
  \omega(\phi) + p + 1
\text{,}
\end{gather}
with equality if and only if $p \nisdiv (2\omega(\phi) - l ) \det(2 M)$.
\end{proposition}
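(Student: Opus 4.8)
The plan is to reduce the statement for Jacobi forms on $\HS\times\CC^l$ to the already-established case $l=1$ (Proposition~2 of~\cite{heat}), exactly mimicking the restriction technique that powered Proposition~\ref{prop:jacobi_weight_congruence} and Theorem~\ref{thm:exception-set-is-trivial}. First I would record the basic structural facts: by~\eqref{heat-trans}, $\bbL_M(\phi) = \frac{(2k-l)\det(M)}{6}\phi E_2 + \widehat\phi$ with $\widehat\phi\in\rmJ_{k+2,M}$, and since $\phi E_2$ has $p$-integral Fourier coefficients while $E_2\equiv E_{p+1}\pmod p$ (with $E_{p+1}\in\rmM^{(1)}_{p+1}(\ZZ_{(p)})$), the form $\bbL_M(\phi)$ is congruent mod $p$ to $\frac{(2k-l)\det(M)}{6}\phi E_{p+1} + \widehat\phi$, which is a genuine Jacobi form of weight $\le k+p+1$ with $p$-integral coefficients. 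This gives the first assertion and the inequality $\omega(\bbL_M(\phi))\le \omega(\phi)+p+1$; replacing $\phi$ by a form realizing its filtration we may assume $k=\omega(\phi)$, so it remains to characterize when equality fails.

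Next I would analyze the equality statement. The standard argument (as in Lemma~5 of~\cite{SwD-l-adic} and Proposition~2 of~\cite{heat}) is that $\omega(\bbL_M(\phi)) < \omega(\phi)+p+1$ precisely when $\widehat\phi \equiv -\frac{(2k-l)\det(M)}{6}\phi E_{p+1}\pmod p$ can be ``absorbed,'' which by the grading (Proposition~\ref{prop:jacobi_weight_congruence}) forces $\frac{(2k-l)\det(M)}{6}\phi E_2$ itself — more precisely the Serre-derivative-type combination $\bbL_M(\phi) - \frac{(2k-l)\det(M)}{6}\phi E_2 = \widehat\phi$ versus a lower-weight representative — to collapse, and the obstruction to this collapse vanishes exactly when the coefficient $\frac{(2k-l)\det(M)}{6}$ is $\equiv 0\pmod p$, i.e. when $p \mid (2k-l)\det(M)$. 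To make this rigorous I would not argue directly on $\HS\times\CC^l$ but restrict: for $s\in\ZZ^l$ the restriction $\phi[s]\in\rmJ_{k,M[s]}(\ZZ_{(p)})$ is a Jacobi form on $\HS\times\CC$, and the heat operator is compatible with restriction in the sense that $\bbL_{M}(\phi)[s] = \det(M)\det(M[s])^{-1}\,\bbL_{M[s]}(\phi[s])$ up to a scalar that I would pin down by a direct chain-rule computation (the key point being that $M^\#[\partial_z]$ restricted along $z\mapsto sz'$ becomes $M^\#[s]\,\partial_{z'}^2$, and $\det(M)^{-1}M^\#[s] = M[s]^{-1}[s]\cdot(\text{stuff})$; one checks $M^\#[s] = \det(M)\cdot {}^{\rmt}s M^{-1} s$ and $M[s] = {}^{\rmt}s M s$, so the scalar ratio works out to $\det(M)/\det(M[s])$ after matching the $\partial_\tau$ terms). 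Then by Proposition~\ref{prop:jacobi_restriction_vector_for_r} I can choose $s$ so that finitely many relevant low-order Fourier coefficients are preserved, hence $\omega(\bbL_M(\phi)) = \omega(\bbL_{M[s]}(\phi[s]))$ and $\omega(\phi)=\omega(\phi[s])$ for a suitable $s$, and $\det(2M[s]) = \det({}^{\rmt}s\,(2M)\,s)$; combined with the need to realize the non-vanishing of the relevant coefficient I would pick $s$ so that additionally $p\nmid \det(M)/\det(M[s])$-type factors do not interfere — actually the cleanest route is to observe that the exponent in the equality condition for $l=1$, namely $p\nmid (2\omega-1)m$ with $m = M[s]$, must be matched against $p\nmid(2\omega-l)\det(2M)$, so I need to track how $l$ and $\det(2M)$ transform under restriction.

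The main obstacle, and the step I expect to require the most care, is precisely this bookkeeping of the \emph{weight shift and index under restriction}: a Jacobi form of index $M$ on $\HS\times\CC^l$ restricts to index $M[s]$ on $\HS\times\CC$, but the theta decomposition~\eqref{eq:theta-decomposition} shows $\rmJ_{k,M}\to\rmM^{(1)}_{k-l/2}$, so the ``effective weight'' of the associated (vector-valued) elliptic form is $k-l/2$, not $k$, and the $l=1$ statement involves $2\omega(\phi)-1$ — i.e. twice the effective weight. So the clean conceptual statement is that equality holds iff $p\nmid (2k-l)$ (up to the $\det(2M)$ factor tracking the Weil representation conductor), and the restriction $\phi[s]$ changes $l$ from $l$ to $1$ and correspondingly $\det(2M)$ to $\det(2M[s])$; I must verify that the quantity $(2k-l)\det(2M)$ maps, under a well-chosen restriction $s$, to a nonzero-mod-$p$ multiple of $(2k-1)\det(2M[s])$ whenever $p\nmid(2k-l)\det(2M)$, and to a multiple of $0$ otherwise. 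The safest implementation is: (a) prove the scalar identity $\bbL_M(f\phi) = $ (heat term) explicitly enough to get the restriction compatibility $\bbL_M(\phi)[s]=\det(M)\det(M[s])^{-1}\bbL_{M[s]}(\phi[s])$ with the scalar a $p$-unit for infinitely many $s$ (achievable by choosing $s$ with $p\nmid\det(M[s])$, possible since $M$ is positive definite so $M[s]$ ranges over a Zariski-dense set of values); (b) choose $s$ simultaneously satisfying this, the Fourier-coefficient-preservation of Proposition~\ref{prop:jacobi_restriction_vector_for_r} for enough coefficients to compute both filtrations, and $p\nmid \det(2M[s])$; (c) apply Proposition~2 of~\cite{heat} to $\phi[s]$, whose equality condition is $p\nmid(2\omega(\phi[s])-1)\,m$ with $m$ the index $M[s]$; (d) translate back using $\omega(\phi[s])=\omega(\phi)$ and the identity relating $(2\omega-l)\det(2M)$ to $(2\omega-1)\det(2M[s])$ modulo $p$ — here I would actually argue more carefully that the \emph{vanishing} of the leading $E_2$-coefficient is what matters and that this coefficient, $\frac{(2\omega(\phi)-l)\det(M)}{6}$, is visible in a single low-order Fourier coefficient of $\bbL_M(\phi)$ that is preserved by restriction, so the dichotomy transfers directly without needing a miraculous numerical identity. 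This last reformulation — phrasing the equality criterion intrinsically in terms of nonvanishing mod $p$ of the explicit $E_2$-coefficient from~\eqref{heat-trans}, then checking that restriction preserves whether that coefficient vanishes — is what makes the reduction to $l=1$ go through cleanly.
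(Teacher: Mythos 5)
Your first paragraph (the congruence $\bbL_M(\phi)\equiv \tfrac{(2k-l)\det(M)}{6}\phi E_{p+1}+\widehat\phi \pmod p$ and the inequality $\omega(\bbL_M(\phi))\le\omega(\phi)+p+1$, plus the easy implication when $p\mid(2k-l)\det(2M)$) matches the paper. But the mechanism you propose for the hard direction fails: the heat operator does \emph{not} commute with restriction up to a scalar. On the Fourier side, $\bbL_M$ multiplies $c(n,r)q^n\zeta^r$ by $4\det(M)n-M^{\#}[r]$, whereas $\bbL_{M[s]}$ multiplies the restricted term $q^n\zeta'^{\,\rT s r}$ by $4M[s]n-(\rT s r)^2$; matching the $\partial_\tau$ parts forces the putative scalar to be $\det(M)/M[s]$, and one would then need $M^{-1}[r]=(\rT s r)^2/M[s]$ for all $r$, i.e.\ the rank-$l$ form $M^{-1}$ would have to equal a rank-one form. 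Equivalently, $(M^{\#}[\partial_z]\phi)[s]$ involves the full Hessian of $\phi$ contracted with $M^{\#}$, which cannot be recovered from $\phi[s]$ (which only sees the directional derivative along $s$). So the identity $\bbL_M(\phi)[s]=\det(M)\det(M[s])^{-1}\bbL_{M[s]}(\phi[s])$ is false for $l>1$, and the reduction to Proposition~2 of~\cite{heat} via restriction collapses.

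Your fallback — ``the dichotomy is governed by whether the $E_2$-coefficient $\tfrac{(2k-l)\det(M)}{6}$ vanishes mod $p$'' — is the right reformulation, but it still requires proving that $\omega(\phi E_2)=\omega(\phi)+p+1$ whenever $\phi\not\equiv 0$, and this is exactly the step you leave unproved. (A restriction argument here would need some $s$ with $\omega(\phi[s])=\omega(\phi)$, which Proposition~\ref{prop:jacobi_restriction_vector_for_r} does not give: it preserves finitely many coefficients, hence nonvanishing mod $p$, but not the filtration.) The paper instead invokes the freeness of $\rmJ_{\bullet,M}(\ZZ_{(p)})$ over $\rmM^{(1)}_\bullet(\ZZ_{(p)})$ (Theorem~\ref{thm:exception-set-is-trivial} and Proposition~\ref{prop:jacobi-p-integral-coefficients}): writing $\phi=\sum_i f_i\phi_i$ with $p$-integral elliptic modular forms $f_i$, some $f_j\phi_j$ realizes the filtration $k$, and Swinnerton-Dyer's results applied to the $f_i$ give $\omega(f_iE_2)=\omega(f_i)+p+1$, whence $\omega(\phi E_2)=k+p+1$. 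That module-theoretic input is the missing idea in your proposal.
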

\begin{proof}
We proceed exactly as in \cite{heat}, and we assume that $\omega(\phi)=k$. Let
\begin{gather*}
  E_{k}(\tau)
:=
  1
  -
  \tfrac{2 k}{B_{k}}
  \sum_{n=1}^{\infty} \Big( \sum_{0<d\isdiv n}d^{k-1} \Big)\, q^n
\end{gather*}
denote the usual Eisenstein series.  Recall that $E_{p-1} \equiv 1 \pmod{p}$ and $E_2 \equiv E_{p+1} \pmod{p}$. Equation~\eqref{heat-trans} shows that $\bbL_m(\phi) \in\rmJ_{k+p+1,M} (\bbF_p)$, i.e., 
$\omega\big(\bbL_m(\phi)\big)\leq k+p+1$. 

If $p$ divides $\left(2k-l\right)\det(2M)$, then $\omega\big(\bbL_m(\phi)\big)\leq k+2<k+p+1$ by~\eqref{heat-trans}.  On the other hand, 
if $\omega\big(\bbL_m(\phi)\big) < k+p+1$, then $\omega\left(\frac{(2k-l)\,\det(M)}{6}\,\phi E_2\right)< k+p+1$ by~\eqref{heat-trans}.  
It remains to show that $\omega\left(\phi E_2\right)=k+p+1$, which then implies that $p$ divides $\left(2k-l\right)\det(2M)$.  Proposition~\ref{prop:jacobi-p-integral-coefficients} asserts that $\phi = \sum_{i=1}^{\rk_M} f_i \phi_i$, where the elliptic modular forms $f_i$ have $p$-integral rational coefficients. There exists an $f_j$ such that $\omega\big(f_j\phi_i\big)=k$, since otherwise $\omega(\phi)<k$. Theorem~2 and Lemma~5 of~\cite{SwD-l-adic} guarantee that $f_iE_2$ has maximal filtration, and we find that $\omega\left(\phi E_2\right)=k+p+1$, which completes the proof.
\end{proof}

Let us introduce an analog of Atkin's $U$-operator for Jacobi forms of higher degree:
\begin{definition}
For
\begin{align*}
  \phi(\tau,z)
=
  \sum_{\substack{ n \geq 0, r \in \ZZ^l \\
                   4 \det(M)\, n - M^{\#}[r] \geq 0 }}
  \hspace{-1em}
  c(n,r)\, q^n\zeta^r \in \rmJ_{k,M}
\text{,}
\end{align*}
we define
\begin{gather}
  \phi(\tau,z) \big|\, U_p
:=
  \sum_{\substack{ n\geq 0, r\in \ZZ^l \\
                   4 \det(M)\, n - M^{\#}[r] \geq 0\\
                   p \isdiv (4 \det(M)\, n - M^{\#}[r])}}
  \hspace{-1em}
  c(n,r)\, q^n\zeta^r
\text{.}
\end{gather}
\end{definition}

If $l=1$, then this is precisely the $U_p$ operator of \cite{crit,heat}.  The following Theorem on $U_p$ congruences is the main result in this Section.  Note that the results for $l$ even and $l$ odd are quite different. 

\begin{theorem}
\label{thm:Jacobi-U_p}
Assume that $p\geq k$ such that $p \nisdiv \det(2  M)$, and let $\phi \in \rmJ_{k,M}(\ZZ_{(p)})$ such that $\phi \not\equiv 0\pmod{p}$.
\begin{enumerate}[a)]
\item 
Let $l$ be even.  If $p>\frac{l}{2}+1$, then
\begin{gather*}
  \omega\Big(\bbL^{p+1-k+\frac{l}{2}}(\phi)\Big)
=
  \begin{cases}
    2p+2+l-k\text{,} & \text{if $\phi \big|\, U_p \not\equiv 0 \pmod{p}$;}
    \\
    \hphantom{2}p+3+l-k\text{,} & \text{if $\phi \big|\, U_p \equiv 0 \pmod{p}$.}
  \end{cases}
\end{gather*}

\item
Let $l$ be odd, and $k > \frac{l + 5}{2}$.
\begin{enumerate}[i)]
\item If $p > 2k-l-4$, then $\phi \, \big | \, U_p \not\equiv 0 \pmod{p}$.
\item If $p < 2k - l - 4$, then
\begin{gather*}
  \omega\Big(\bbL^{\frac{3p+l}{2}+1-k}(\phi)\Big)
=
  \begin{cases}
    3p+2+l-k\text{,} & \text{if $\phi \big|\, U_p \not\equiv 0 \pmod{p}$;}
    \\
    2p+3+l-k\text{,} & \text{if $\phi \big|\, U_p \equiv 0 \pmod{p}$.}
  \end{cases}
\end{gather*}
\end{enumerate}
\end{enumerate}
\end{theorem}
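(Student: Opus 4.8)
The plan is to imitate Tate's theory of theta cycles, using the heat operator $\bbL$ in place of the classical theta operator and Proposition~\ref{prop: filtration-of-heat-application} in place of Lemma~5 of~\cite{SwD-l-adic}. The starting observation is that, by the formula for the Fourier expansion of $\bbL$ on $\rmJ_{k,M}$, the coefficient of $q^n\zeta^r$ gets multiplied by $(4\det(M)n - M^\#[r])$; hence, after $p-1$ applications, Fermat's little theorem shows $\bbL^{p-1}(\phi) \equiv \phi - \phi\big|\,U_p \pmod p$, so $\phi\big|\,U_p \equiv 0$ exactly when $\bbL^{p-1}(\phi) \equiv \phi$, and more generally controlling the $U_p$-behaviour is equivalent to locating the ``low point'' of the heat cycle. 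First I would set up the heat cycle: starting from $\phi$ of filtration $k$, repeatedly apply $\bbL$, and use Proposition~\ref{prop: filtration-of-heat-application} to track the filtration, which increases by $p+1$ at each step \emph{unless} the congruence condition $p \mid (2\omega(\phi') - l)\det(2M)$ is met (and $p\nmid\det(2M)$ is assumed, so only $p \mid 2\omega(\phi') - l$ matters). Since $\bbL^{p-1}$ acts as the identity minus $U_p$, the cycle has length dividing $p-1$ (after suitable normalization by a power of $E_{p-1}\equiv 1$), and the drop in filtration must total a multiple of $p-1$ around the cycle.

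Next I would analyze \emph{where} the filtration drops can occur. A drop at step $\phi'$ happens iff $2\omega(\phi') \equiv l \pmod p$. Since the filtrations along the cycle are $\equiv k \pmod{p-1}$ and lie in a controlled range, I count how many indices $j$ in $0 \le j \le p-2$ satisfy $2(k + j(p+1)) \equiv l \pmod p$, i.e. (using $p+1 \equiv 2$, $p-1\equiv -2$ mod... more carefully $p+1 \equiv 1 \pmod p$ is false; rather $j(p+1) \equiv j \pmod p$ is wrong too — $p+1 \equiv 1$, so $j(p+1)\equiv j$, wait $p+1\equiv 1 \pmod p$, yes) so the condition is $2(k+j) \equiv l \pmod p$, which has a \emph{unique} solution $j_0 \in \{0,\dots,p-1\}$ when $l$ is... here the parity of $l$ enters decisively: $2$ is invertible mod the odd prime $p$, so there is always a unique residue $j_0$; but whether that $j_0$ actually occurs as a filtration value reachable from $\phi$ within one descent, and whether there is a \emph{second} low point, depends on $l \bmod 2$ through the range constraints $\omega \le p$ (from $p\ge k$ and the growth) and the specific shape of the bounds in the statement. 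Concretely, for $l$ even one expects exactly the two generic low points described (giving $\bbL^{p+1-k+l/2}$ landing at filtration $2p+2+l-k$ or $p+3+l-k$), while for $l$ odd the arithmetic of $2\omega \equiv l \pmod p$ forces the relevant drop only after $\approx \tfrac{3p}{2}$ steps, producing part (b), and when $p > 2k-l-4$ the would-be low filtration would be negative or below the trivial filtration, forcing $\phi\big|\,U_p \not\equiv 0$.

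The key computational steps, then, are: (1) establish $\bbL^{p-1} \equiv \id - U_p$ on $q$-expansions mod $p$ and deduce that $\phi\big|\,U_p\equiv 0$ iff $\bbL^{p-1}(\phi)\equiv\phi$ iff the heat cycle of $\phi$ ``closes up early''; (2) using Proposition~\ref{prop: filtration-of-heat-application}, show that along the cycle the filtration strictly increases by $p+1$ except at the (one or two) steps where $2\omega \equiv l \pmod p$, and that at such a step it drops to the value forced by $\bbL(\phi')\equiv \tfrac{(2\omega(\phi')-l)\det(M)}{6}\phi' E_2 + \widehat{\phi'}$ with the $E_2$-term vanishing mod $p$; (3) solve the congruence $2\omega \equiv l \pmod p$ within the allowed range of filtrations, doing the even and odd $l$ cases separately, and read off the exponent of $\bbL$ needed to reach the first low point and its filtration; (4) in the two sub-cases ($\phi\big|\,U_p \equiv 0$ vs.\ $\not\equiv 0$), check that the cycle either does or does not pass through a \emph{second} low point before returning, which shifts the answer by $p$; and for (b)(i), observe that if $p$ is too large the first low point cannot be reached (its filtration would be below $\omega(\phi)$'s trivial lower bound), so no $U_p$ congruence is possible. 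The main obstacle I expect is step~(3)–(4): carefully pinning down the exact range of filtration values occurring in the cycle — this requires the hypotheses $p \ge k$, $p \nmid \det(2M)$, and the parity-dependent inequalities on $k$ — and verifying there is no spurious extra drop, which is precisely where the even/odd dichotomy is genuine rather than cosmetic; the bookkeeping must be done with enough care that the off-by-one constants in $2p+2+l-k$, $p+3+l-k$, $3p+2+l-k$, $2p+3+l-k$ come out exactly right.
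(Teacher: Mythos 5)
Your plan is essentially the paper's own proof: the paper likewise adapts Tate's theta-cycle argument (following \S7 of Jochnowitz and Proposition~3 of~\cite{crit}), uses Proposition~\ref{prop: filtration-of-heat-application} to show the filtration rises by $p+1$ except when $2\omega \equiv l \pmod{p}$, identifies low points, solves $2c_1 + 2B - 2 - l \equiv 0 \pmod{p}$ separately for $l$ even and odd, rules out the smaller root via $\rmJ_{k,M}=\{0\}$ for $k<\tfrac{l}{2}$, and derives (b)(i) from the constraint $c_1 \ge 1$. Your steps (1)--(4), including the one-fall/two-fall bookkeeping and the identity $\bbL^{p-1} \equiv \mathrm{id} - U_p$ on Fourier expansions, match the paper's argument in both structure and the way the parity of $l$ enters.
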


\begin{proof}
We follow the proof of Proposition~3 in \cite{crit}, and our key ingredients are Propositions~\ref{prop:jacobi_weight_congruence} and~\ref{prop: filtration-of-heat-application}.  

Suppose that $\phi \big|\, U_p \equiv 0 \pmod{p}$, in which case $\bbL^{p-1}(\phi)\equiv \phi \pmod{p}$, i.e., $\phi$ is in its own heat cycle.  We say that $\phi_1$ a {\it low point} of its heat cycle if $\phi_1=\bbL^A(\phi)$ and $2\omega\big(\bbL^{A-1}\phi\big)\equiv l \pmod{p}$. Let $\phi_1$ be a low point of its heat cycle and let $c_j\in\NN$ be minimal such that 
\begin{gather*}
  2\omega\big(\bbL^{c_j-1}(\phi_1)\big)
=
  2\left(\omega\big(\phi_1\big)+(c_j-1)(p+1)\right)\equiv l \pmod{p}
\text{,}
\end{gather*}
and let $b_j\in\NN$ be defined by
\begin{gather*}
  \omega\big(\bbL^{c_j}(\phi_1)\big)
=
  \omega\big(\phi_1\big)+c_j(p+1)-b_j(p-1)
\text{.}
\end{gather*}
Exactly as in $\S7$ of \cite{Joch-1982} (see also $\S3$ of \cite{crit}), one finds that there is either one fall with $c_1=p-1$ and $b_1=p+1$, or there are two falls with $b_1=p-c_2$ and $b_2=p-c_1$. One fall occurs if and only if $2\omega\big(\phi_1\big)\equiv l+4 \pmod{p}$.  In the following, we assume that there are two falls, and we write $\omega\big(\phi_1\big)=ap+B$ with $1\leq B\leq p$ and $p\neq 2B-4-l$.  Especially, if $\phi_1=\phi$, then $a=0$ and $B=k$.  We find that 
\begin{gather*}
2c_1+2B-2-l\equiv 0\pmod{p}.
\end{gather*}

If {\it $l$ is even}, then necessarily $c_1=1-B+\frac{l}{2}$ or $c_1=p+1-B+\frac{l}{2}$.  Note that $c_1\geq 1$, and hence the case $c_1=1-B+\frac{l}{2}$ is only possible if $B\leq\frac{l}{2}$.  In particular, if $\phi_1=\phi$, then $k=B=\frac{l}{2}$, since $\rmJ_{k,M}=\{0\}$ if $k<\frac{l}{2}$.  However, if $k=\frac{l}{2}$, then
\begin{gather*}
\omega\big(\bbL^{1}(\phi)\big)=\tfrac{l}{2}+(p+1)-2(p-1)=3+\tfrac{l}{2}-p\underset{p>\frac{l}{2}+1}{<}2
\text{,}
\end{gather*}
which is impossible, since $\rmJ_{1,M}=\{0\}$.
For the case $c_1=p+1-B+\frac{l}{2}$ we obtain
\begin{gather*}
\omega\big(\bbL^{c_1}(\phi_1)\big)=(a+1)p+3+l-B
\text{,}
\end{gather*} 
which gives the desired formula if $\phi_1=\phi$.

If {\it $l$ is odd}, then necessarily $c_1=\frac{p+l}{2}+1-B$ or $c_1=\frac{3p+l}{2}+1-B$.  If $c_1=\frac{p+l}{2}+1-B$, then $b_1=\frac{3p+l}{2}+2-B$, and if, in addition, $\phi_1=\phi$, then
\begin{gather*}
\omega\big(\bbL^{c_1}(\phi)\big) =k+c_1(p+1)-b_1(p-1)=3+l-k\underset{k>\frac{l+5}{2}}{\leq}\tfrac{l-1}{2}
\text{,}
\end{gather*}
which is impossible, since $\rmJ_{k,M}=\{0\}$ if $k<\frac{l}{2}$.  For the case $c_1=\frac{3p+l}{2}+1-B$ we calculate
\begin{gather*}
\omega\big(L^{c_1}(\phi_1)\big)=(a+2)p+3+l-B
\text{,}
\end{gather*} 
which is as required if $\phi_1=\phi$.  Note also that in this case $c_1\geq 1$ implies that $p<2B-l-4$.  Hence if $p > 2k-l-4$, then $\phi \, \big | \, U_p \not\equiv 0 \pmod{p}$.

On the other hand, if $\phi \big|\, U_p \not\equiv 0 \pmod{p}$, then (under the assumption that $k\leq p$) it is easy to see that $\bbL(\phi)$ is a low point of its heat cycle (see also \cite{crit}) with $\omega\big(\bbL(\phi)\big)=p+k+1$.  Thus, we can apply our previous findings with $a=1$ and $B=k+1$:  If {\it $l$ is even}, then the case $c_1=1-B+\frac{l}{2}$ is impossible, since $c_1\geq 1$ would imply $k<\frac{l}{2}$.  Hence $c_1=p+1-B+\frac{l}{2}$, and $\bbL^{p+1-k+\frac{l}{2}}(\phi)=2p+2+l-k$.  If {\it $l$ is odd}, then necessarily $p<2B-l-4$, and the case $c_1=\frac{p+l}{2}+1-B$ is impossible (since again $c_1\geq 1$).  Consequently, $c_1=\frac{3p+l}{2}+1-B$ and we obtain $\bbL^{\frac{3p+l}{2}+1-k}(\phi)=3p+2+l-k$.

\end{proof}

\begin{remark}
\label{rem:U_p-explanation}
If $l$ is odd, then Theorem~\ref{thm:Jacobi-U_p} generalizes Proposition 3 of \cite{crit} and Theorem~2 of~\cite{C-C-R-Siegel}.  If $l$ is even, then the result is weaker, since one cannot exclude $U_p$ congruences modulo $p$ for almost all primes $p$.  This is related to the fact that $\rmM^{(1)}_k \cong \rmJ_{k + \frac{l}{2}, M}$ for unimodular $M$ (in which case $8 \isdiv l$).  Indeed, for even~$l$ the situation is somewhat analogous to the case of elliptic modular forms (see $\S7$ of \cite{Joch-1982}), where it is also impossible to exclude $U_p$ congruences modulo $p$ for almost all primes $p$.
\end{remark}

\subsection{Examples}
\label{sec:Jacobiexamples}
We discuss a few examples to illustrate Theorem~\ref{thm:Jacobi-U_p} in the case that $l=3$ and
\begin{gather*}
  M
=
  \left(\begin{smallmatrix} 1 & 0 & 0 \\
                            0 & 1 & 0 \\
                            0 & 0 & 1
  \end{smallmatrix}\right)
\text{,}
\end{gather*} 
where we have performed necessary calculations with the help of Sage~\cite{sage57}.  Note that \cite{Ra12-special-cycles} provides tools to compute spaces of Jacobi forms:  The dimension formulas of $\rmJ_{k, M}$ are implemented in the computer code accompanying~\cite{Ra12-special-cycles}.  Moreover, Lemma~4.1 of~\cite{Ra12-special-cycles} shows that any Jacobi form in $\rmJ_{k, M}$ is uniquely determined by its restrictions along vectors in a finite set $\cS \subset \ZZ^l$.  For $l=3$ and $M$ as above, one finds using Sage that
\begin{gather*}
  \cS
=
  \big\{ (0, 1, 1), (1, 1, 0), (1, 0, 0), (0, 0, 1),
         (1, 0, 1), (0, 1, 0), (1, 1, 1)
  \big\}
\end{gather*}
is a possible choice.  The index of a restriction along $s\in\ZZ^3$ equals $M[s]$.  In our case, $M[s]=3$ is the maximal index that can occur for $s \in \cS$.  Note that Jacobi forms of index~$m$ are determined by corrected Taylor coefficients (see Section~3 of~\cite{EZ}) in $\rmM^{(1)}_{k + 2j}$ for $0 \le j \le m$.  Hence we can deduce a Sturm bound for $\rmJ_{k, M}$ from the Sturm bound of $\rmM_{k + 2j}$ with $0 \le j \le 3$, which allows us to determine Jacobi forms from their initial Fourier series expansions.  The methods in~\cite{Ra12-special-cycles} show how to find an explicit basis with integral Fourier series coefficients.  In Table~\ref{tab:jacobi-forms-fourier-expansions}, we give the initial Fourier series coefficients of such a basis $\phi_{k, i}$ ($0 \le i < \dim \rmJ_{k, M}$) of $\rmJ_{k, M}$ for $k \in \{4, 6, 8, 10\}$. Representatives of $r \in \ZZ^l$ modulo $2 M \ZZ^l$ and $\{\pm 1\}$ are given by
\begin{alignat*}{4}
  r_0 &= (0, 0, 0)
\text{,}
&
\quad
  r_1 &= (0, 0, 1)
\text{,}
&
\quad
  r_2 &= (0, 1, 0)
\text{,}
&
\quad
  r_3 &= (0, 1, 1)
\text{,}
\\
  r_4 &= (1, 0, 0)
\text{,}
&
  r_5 &= (1, 0, 1)
\text{,}
&
  r_6 &= (1, 1, 0)
\text{,}
&
  r_7 &= (1, 1, 1)
\text{.}
\end{alignat*}

\begin{table}[t]
{\tiny
\begin{tabular}{l@{\hspace{2em}}r@{\hspace{1em}}rr@{\hspace{1em}}rrrr@{\hspace{1em}}rrrrr}
\toprule
 &
  $\rmJ_{4, M}$ & $\rmJ_{6, M}$ &&
  $\rmJ_{8, M}$ &&&&
  $\rmJ_{10, M}$ &&&&
\\ 
 &
 $\phi_{4, 0}$ & $\phi_{6, 0}$ & $\phi_{6, 1}$ &
 $\phi_{8, 0}$ & $\phi_{8, 1}$ & $\phi_{8, 2}$ & $\phi_{8, 3}$ &
 $\phi_{10, 0}$ & $\phi_{10, 1}$ & $\phi_{10, 2}$ & $\phi_{10, 3}$ & $\phi_{10, 4}$
\\
\midrule
$(0, r_0)$ &
  $1$ & $1$ & $0$ & $1$ & $0$ & $0$ & $0$ & $1$ & $0$ & $0$ & $0$ & $0$ \\
$(1, r_0)$ &
  $26$ & $2$ & $8$ & $26$ & $40$ & $0$ & $0$ & $2$ & $8$ & $0$ & $0$ & $0$ \\
$(1, r_1)$ &
  $16$ & $-104$ & $-4$ & $0$ & $0$ & $2$ & $0$ & $6$ & $6$ & $10$ & $0$ & $0$ \\
$(1, r_2)$ &
  $16$ & $-104$ & $-4$ & $0$ & $0$ & $0$ & $2$ & $6$ & $6$ & $0$ & $10$ & $0$ \\
$(1, r_3)$ &
  $8$ & $20$ & $2$ & $84$ & $-10$ & $-1$ & $-1$ & $0$ & $1$ & $1$ & $1$ & $3$ \\
$(1, r_4)$ &
  $16$ & $-104$ & $-4$ & $264$ & $-36$ & $-2$ & $-2$ & $-104$ & $-14$ & $-10$ & $-10$ & $-10$ \\
$(1, r_5)$ &
  $8$ & $20$ & $2$ & $-48$ & $8$ & $0$ & $1$ & $-11$ & $-1$ & $0$ & $-1$ & $2$ \\
$(1, r_6)$ &
  $8$ & $20$ & $2$ & $-48$ & $8$ & $1$ & $0$ & $-11$ & $-1$ & $-1$ & $0$ & $2$ \\
$(1, r_7)$ &
  $2$ & $-16$ & $-1$ & $-4$ & $1$ & $0$ & $0$ & $0$ & $0$ & $0$ & $0$ & $-1$ \\
$(2, r_0)$ &
  $72$ & $-2064$ & $-48$ & $8760$ & $-48$ & $0$ & $0$ & $-28704$ & $-48$ & $0$ & $0$ & $1920$ \\
$(2, r_1)$ &
  $64$ & $-608$ & $16$ & $5632$ & $-256$ & $-24$ & $0$ & $-10616$ & $-56$ & $-72$ & $0$ & $-960$ \\
$(2, r_2)$ &
  $64$ & $-608$ & $16$ & $5632$ & $-256$ & $0$ & $-24$ & $-10616$ & $-56$ & $0$ & $-72$ & $-960$ \\
$(2, r_3)$ &
  $48$ & $-552$ & $-4$ & $440$ & $228$ & $10$ & $10$ & $-3904$ & $-114$ & $-82$ & $-82$ & $426$ \\
$(2, r_4)$ &
  $64$ & $-608$ & $16$ & $2464$ & $176$ & $24$ & $24$ & $-9824$ & $88$ & $72$ & $72$ & $-888$ \\
$(2, r_5)$ &
  $48$ & $-552$ & $-4$ & $1760$ & $48$ & $0$ & $-10$ & $-3002$ & $50$ & $0$ & $82$ & $508$ \\
$(2, r_6)$ &
  $48$ & $-552$ & $-4$ & $1760$ & $48$ & $-10$ & $0$ & $-3002$ & $50$ & $82$ & $0$ & $508$ \\
$(2, r_7)$ &
  $48$ & $-288$ & $0$ & $960$ & $-72$ & $0$ & $0$ & $-512$ & $16$ & $0$ & $0$ & $-256$ \\
$(3, r_0)$ &
  $144$ & $-4128$ & $96$ & $78576$ & $288$ & $0$ & $0$ & $-581696$ & $-32$ & $0$ & $0$ & $5888$ \\
$(3, r_1)$ &
  $112$ & $-3992$ & $4$ & $43008$ & $1024$ & $110$ & $0$ & $-298982$ & $282$ & $214$ & $0$ & $-4864$ \\
$(3, r_2)$ &
  $112$ & $-3992$ & $4$ & $43008$ & $1024$ & $0$ & $110$ & $-298982$ & $282$ & $0$ & $214$ & $-4864$ \\
$(3, r_3)$ &
  $112$ & $-3272$ & $-20$ & $32728$ & $-556$ & $-30$ & $-30$ & $-141184$ & $1094$ & $774$ & $774$ & $3794$ \\
$(3, r_4)$ &
  $112$ & $-3992$ & $4$ & $57528$ & $-956$ & $-110$ & $-110$ & $-301336$ & $-146$ & $-214$ & $-214$ & $-5078$ \\
$(3, r_5)$ &
  $112$ & $-3272$ & $-20$ & $28768$ & $-16$ & $0$ & $30$ & $-149698$ & $-454$ & $0$ & $-774$ & $3020$ \\
$(3, r_6)$ &
  $112$ & $-3272$ & $-20$ & $28768$ & $-16$ & $30$ & $0$ & $-149698$ & $-454$ & $-774$ & $0$ & $3020$ \\
$(3, r_7)$ &
  $50$ & $-1552$ & $15$ & $15836$ & $9$ & $0$ & $0$ & $-69120$ & $-128$ & $0$ & $0$ & $-2017$ \\
$(4, r_0)$ &
  $218$ & $-16270$ & $-64$ & $400586$ & $-3008$ & $0$ & $0$ & $-5013022$ & $704$ & $0$ & $0$ & $-27648$ \\
$(4, r_1)$ &
  $192$ & $-13344$ & $-80$ & $279040$ & $-768$ & $-200$ & $0$ & $-3114792$ & $-1384$ & $-1048$ & $0$ & $8896$ \\
$(4, r_2)$ &
  $192$ & $-13344$ & $-80$ & $279040$ & $-768$ & $0$ & $-200$ & $-3114792$ & $-1384$ & $0$ & $-1048$ & $8896$ \\
$(4, r_3)$ &
  $160$ & $-8816$ & $40$ & $190608$ & $-968$ & $-20$ & $-20$ & $-1869440$ & $-3756$ & $-2668$ & $-2668$ & $-1540$ \\
$(4, r_4)$ &
  $192$ & $-13344$ & $-80$ & $252640$ & $2832$ & $200$ & $200$ & $-3103264$ & $712$ & $1048$ & $1048$ & $9944$ \\
$(4, r_5)$ &
  $160$ & $-8816$ & $40$ & $187968$ & $-608$ & $0$ & $20$ & $-1840092$ & $1580$ & $0$ & $2668$ & $1128$ \\
$(4, r_6)$ &
  $160$ & $-8816$ & $40$ & $187968$ & $-608$ & $20$ & $0$ & $-1840092$ & $1580$ & $2668$ & $0$ & $1128$ \\
$(4, r_7)$ &
  $240$ & $-8352$ & $0$ & $124608$ & $792$ & $0$ & $0$ & $-1063424$ & $208$ & $0$ & $0$ & $-3328$ \\
\bottomrule \\
\end{tabular}}
\caption{Initial Fourier series expansions of Jacobi forms of weights~$4$, $6$, $8$, and $10$ and index~$M$.}
\label{tab:jacobi-forms-fourier-expansions}
\end{table}

Recall that $p\geq 5$ throughout the paper.  We have $\rmJ_{2, M} = \{ 0 \}$, and $\dim \rmJ_{4, M}=1$, but no nonzero element in $\rmJ_{4, M}$ has a $U_p$ congruence.

The space $\rmJ_{6, M}$ has dimension~$2$, and Theorem~\ref{thm:Jacobi-U_p} asserts that there are no $U_p$ congruences for $p \geq 7$.  If $p=5$, then we find that Jacobi forms that have $U_5$ congruences form a $1$-dimensional space which is spanned by the unique cusp form~$\phi_{6,1}$.

Theorem~\ref{thm:Jacobi-U_p} shows that forms in $\rmJ_{8, M}$, which has dimension~$4$, can have $U_p$ congruences only for $p \in \{5, 7\}$.  We find that the space of Jacobi forms exhibiting $U_p$ congruences has dimension~$2$ if $p=5$ and dimension~$1$ if $p=7$.  More specifically, forms with $U_5$ congruences are linear combinations of~$\phi_{8, 2}$ and~$\phi_{8,3}$, and the space of Jacobi forms with $U_7$ congruences is spanned by~$\phi_{8,1} + \phi_{8,2} + \phi_{8,3}$.

Finally, we consider the space $\rmJ_{10, M}$, which is $5$-dimensional. Theorem~\ref{thm:Jacobi-U_p} asserts that there are no $U_p$ congruences for $p>13$.  If $p \in \{5, 7, 11, 13\}$, then  the corresponding spaces of Jacobi forms with $U_p$ congruences have dimensions~$1$, $1$, $2$, and $4$, respectively.  They equal the span of
\begin{alignat*}{2}
&
  \phi_{10, 1} + 4 \phi_{10, 2} + 4 \phi_{10, 3} + \phi_{10, 4}
\quad
&&
  \text{for $p = 5$}
\text{;}
\\
&
  \phi_{10, 1} + 5 \phi_{10, 2} + 5 \phi_{10, 3} + 4 \phi_{10, 4}
\quad
&&
  \text{for $p = 7$}
\text{;}
\\
&
  \phi_{10, 1} + 9 \phi_{10, 4}
\;\,
\text{and}\quad
  \phi_{10, 2} + \phi_{10, 3} + 8 \phi_{10, 4}
\quad
&&
  \text{for $p = 11$}
\text{;}
\\
&
  \phi_{10, 1}
\text{,}\;\,
  \phi_{10, 2}
\text{,}\;\,
  \phi_{10, 3}
\text{,}\;\,
\text{and}\quad
 \phi_{10, 4}
\quad
&&
  \text{for $p = 13$}
\text{.}
\end{alignat*}

In the case $k = 10$ and $p = 11$, we apply Theorem~\ref{thm:Jacobi-U_p} to verify that $\bbL^{\frac{3 p + l}{2} + 1 - k}(\phi)$ is congruent to a form of weight $3 p + 2 + l - k$.   In the cases that $\phi \in \rmJ_{k,M}(\ZZ_{(p)})$ with $k = 6, 8, 10$ and $p=5$, $k = 8, 10$ and $p=7$, and $k=10$ and $p =13$, we establish the above congruences by checking that $\bbL^{p - 1} \phi \equiv \phi\pmod{p}$.  These later computations are more expensive, since the filtration of $\bbL^{p - 1} \phi$ could be as large as $k+(p+1)(p-1)$.  For example, in the case $k = 10$, $p = 13$ the Sturm bound shows that it suffices to check Fourier series coefficients for all $n < 17$.  In comparison, if our Theorem~\ref{thm:Jacobi-U_p} had been applicable, Fourier series coefficients up to $n < 5$ would have sufficed.

\needspace{5\baselineskip}
\section{Siegel modular forms}
\label{sec:Siegelmodularforms}
 
\subsection{Preliminaries}
\label{sec:Siegelpreliminaries}

Let $\HS_g$ be the Siegel upper half space of degree~$g$, $Z\in\HS_g$ be a typical variable, and $\Sp{g}(\ZZ)$ be the symplectic group of degree~$g$ over the integers.  For a ring $R\subset\CC$, let $\rmM^{(g)}_k (R)$ be the vector space of Siegel modular forms of degree~$g$, weight~$k$, and with coefficients in $R$
(for details on Siegel modular forms, see for example Freitag \cite{Frei_1} or Klingen \cite{Klingen}).  We suppress $R$ if $R=\CC$.  Recall that $\Phi\in\rmM^{(g)}_k$ satisfies the transformation law
\begin{gather*}
  \Phi\big( (A Z + B) (C Z + D)^{-1} \big)
=
  \det(C Z + D)^{k}\, \Phi(Z)
\end{gather*}
for all $\left(\begin{smallmatrix} A & B \\ C & D \end{smallmatrix}\right) \in \Sp{g}(\ZZ)$.  Furthermore, $\Phi$ has a Fourier series expansion of the form
\begin{gather*}
  \sum_{T=\rT{T}\geq 0} \hspace{-.5em}
  c(\Phi; T)\, e^{2\pi i\, \tr(T Z) }
\text{,}
\end{gather*}
where the sum is over all symmetric, semi-positive definite, and half-integral $g\times g$ matrices with integral diagonal entries.  We always denote the Fourier series coefficients of a Siegel modular form~$\Phi$ by $c(\Phi;T)$.   Write $Z=\left(\begin{smallmatrix}\tau & z\\ \rT{z} & W\end{smallmatrix}\right)\in\HS_g$, 
where $\tau\in\HH$, $z\in\CC^{g-1}$, and $W\in\HH_{g-1}$ to find the Fourier-Jacobi expansion:
\begin{gather*}
  \Phi(Z)
=
  \Phi(\tau,z,W)
=
  \sum_{M=\rT{M}\geq 0} \hspace{-.5em}
  \Phi_{M}(\tau,z)\, e^{2\pi i\,\tr(MW)}
\text{,}
\end{gather*}
\noindent
where the sum is over all symmetric, semi-positive definite, and half-integral $g-1\times g-1$ matrices with integral diagonal entries, and where $\Phi_{M}\in\rmJ_{k, M}$ are Jacobi forms on $\HS\times\CC^{g-1}$.



\subsection{Siegel modular forms modulo~$p$}
\label{sec:modularformsmodp}
Recall that $p\geq 5$ is a prime.  Note that  B\"ocherer and Nagaoka~\cite{Bo-Na-MathAnn07} require that $p\geq g+3$, and whenever we apply their results we also assume that $p\geq g+3$.  For Siegel modular forms $\Phi(Z) = \sum c(\Phi;T)\, e^{2\pi i \, \tr(TZ)}$ and $\Psi(Z) = \sum c(\Psi;T)\, e^{2\pi i \, \tr(TZ)}$  with coefficients in $\ZZ_{(p)}$, we write $\Phi\equiv \Psi \pmod{p}$ when 
$c(\Phi;T)\equiv c(\Psi;T) \pmod{p}$ for all $T$.  Set
\begin{gather}
  \rmM_k^{(g)}(\bbF_p)
:=
  \big\{\widetilde{\Phi}\, :\, \Phi\in \rmM_k^{(g)}(\ZZ_{(p)}) \big\}
\text{,}
\end{gather}
where $\widetilde{\Phi}(Z) = \sum \widetilde{c}(\Phi;T)\, e^{2\pi i \,\tr(TZ)}$ (with $\wtd{c}(\Phi;T) \in \bbF_p$) denotes the reduction modulo $p$ of $\Phi$. 

We now introduce the weight filtration for Siegel modular forms modulo~$p$.  If $\Phi \in \rmM^{(g)}_k(\ZZ_{(p)})$ or $\Phi \in \rmM^{(g)}_k(\bbF_p)$, then
\begin{gather*}
  \omega(\Phi)
:=
  \inf \big\{ k' \in \ZZ \,:\,
              \exists_{ \Psi \in \rmM^{(g)}_{k'}(\ZZ_{(p)}) }\, \Psi \equiv \Phi \pmod{p} \big\}
\text{.}
\end{gather*}
 
We extend Proposition~\ref{prop:jacobi_weight_congruence} to Siegel modular forms.
\begin{proposition}
\label{prop:siegel-weight-congruence-homogeous-components}
Let $(\Phi_k)_k$ be a finite family of Siegel modular forms $\Phi_k \in \rmM^{(g)}_k(\ZZ_{(p)})$.  If $\sum_k \Phi_k \equiv 0 \pmod{p}$, then for all $a \in \ZZ \slashdiv (p - 1) \ZZ$ we have
\begin{gather*}
  \sum_{k \in a + (p - 1) \ZZ} \hspace{-.5em}
  \Phi_k
\equiv
  0 \pmod{p}
\text{.}
\end{gather*}
\end{proposition}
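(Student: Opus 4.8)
The plan is to reduce the claim to the analogous statement for Jacobi forms on $\HS \times \CC^{g-1}$, which is Proposition~\ref{prop:jacobi_weight_congruence}, by passing to Fourier--Jacobi expansions. We may assume $g \ge 2$, since for $g = 1$ the proposition is exactly Proposition~\ref{prop:congruences-for-elliptic-modular-forms}. Writing $Z$ in the block form of Section~\ref{sec:Siegelpreliminaries}, each $\Phi_k$ has a Fourier--Jacobi expansion $\Phi_k(\tau, z, W) = \sum_M (\Phi_k)_M(\tau, z)\, e^{2\pi i\, \tr(M W)}$, the sum running over symmetric, semi-positive definite, half-integral $(g-1) \times (g-1)$ matrices $M$, with $(\Phi_k)_M \in \rmJ_{k, M}$. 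Since $\Phi_k$ has $p$-integral Fourier coefficients, so does each $(\Phi_k)_M$, so in fact $(\Phi_k)_M \in \rmJ_{k, M}(\ZZ_{(p)})$.

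The key observation is that the set of Fourier coefficients of a Siegel modular form is the disjoint union, over all indices $M$, of the sets of Fourier coefficients of its Fourier--Jacobi coefficients; consequently a Siegel modular form with $p$-integral coefficients is $\equiv 0 \pmod p$ if and only if all of its Fourier--Jacobi coefficients are. Applying this to $\sum_k \Phi_k$, the hypothesis becomes $\sum_k (\Phi_k)_M \equiv 0 \pmod p$ for every fixed $M$. For each such $M$ I would then invoke Proposition~\ref{prop:jacobi_weight_congruence} to obtain $\sum_{k \in a + (p-1)\ZZ} (\Phi_k)_M \equiv 0 \pmod p$ for every $a \in \ZZ \slashdiv (p-1)\ZZ$. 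Two degenerate cases must be dispatched here: when $M$ is only semi-positive definite, one reduces to a positive definite index of smaller size as in Remark~\ref{rm:semi-definite-indices}; and when $M = 0$, the form $(\Phi_k)_0$ is independent of $z$ and is an elliptic modular form of weight $k$, so Proposition~\ref{prop:congruences-for-elliptic-modular-forms} applies directly.

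Finally, the $M$-th Fourier--Jacobi coefficient of $\sum_{k \in a + (p-1)\ZZ} \Phi_k$ is precisely $\sum_{k \in a + (p-1)\ZZ} (\Phi_k)_M$, which we have just shown vanishes modulo $p$ for every $M$; invoking the disjoint-union observation once more in the reverse direction yields $\sum_{k \in a + (p-1)\ZZ} \Phi_k \equiv 0 \pmod p$, as desired. The argument is essentially a translation to Jacobi forms, so I expect no serious obstacle; the one point requiring care is the bookkeeping around the degenerate indices $M$ --- in particular verifying that each $(\Phi_k)_M$ genuinely lies in $\rmJ_{k, M}(\ZZ_{(p)})$ (not merely in $\rmJ_{k, M}$) so that Proposition~\ref{prop:jacobi_weight_congruence} is applicable, and handling $M = 0$ separately.
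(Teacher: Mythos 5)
Your proposal is correct and follows essentially the same route as the paper: the published proof also passes to Fourier--Jacobi coefficients, observes that the congruence holds if and only if it holds for each $(\Phi_k)_M$, and then invokes Proposition~\ref{prop:jacobi_weight_congruence} together with Remark~\ref{rm:semi-definite-indices} for the degenerate indices. Your additional care about $p$-integrality of the $(\Phi_k)_M$ and the $M=0$ case is sound bookkeeping that the paper leaves implicit.
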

\begin{proof}
Consider the Fourier-Jacobi coefficients $(\Phi_k)_M$ of $\Phi_k$ to find that the desired congruence holds if and only if
\begin{gather*}
  \sum_{k \in a + (p - 1) \ZZ} \hspace{-.5em}
  (\Phi_k)_M
\equiv
  0 \pmod{p}
\text{,}
\end{gather*}
holds for all $M$. 
%
Thus, Proposition~\ref{prop:jacobi_weight_congruence} in combination with Remark~\ref{rm:semi-definite-indices} yields the claim.
\end{proof}

Proposition~\ref{prop:siegel-weight-congruence-homogeous-components} includes the following special case, which is due to~Ichikawa~\cite{Ichi-MathAnn08} (see also B\"ocherer-Nagaoka~\cite{Bo-Na-Abh10}) if $p \ge g + 3$.  
\begin{corollary}
\label{cor:siegel-weight-congruence}
Suppose that $0 \not \equiv \Phi \equiv \Psi \pmod{p}$ for $\Phi \in \rmM^{(g)}_k(\ZZ_{(p)})$ and $\Psi \in \rmM^{(g)}_{k'}(\ZZ_{(p)})$.  Then $k \equiv k' \pmod{(p - 1)}$.  In particular, $\omega(\Phi)$ is congruent modulo~$p - 1$ to the weight of~$\Phi$.
\end{corollary}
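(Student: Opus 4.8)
The plan is to obtain both assertions as immediate consequences of Proposition~\ref{prop:siegel-weight-congruence-homogeous-components}. If $k = k'$ there is nothing to prove, so assume $k \neq k'$. Organize $\Phi$ and $\Psi$ into a single family $(\Phi_j)_j$ indexed by weights by setting $\Phi_k := \Phi$, $\Phi_{k'} := -\Psi$, and $\Phi_j := 0$ for every other $j$. Since $\Phi \equiv \Psi \pmod{p}$ we have $\sum_j \Phi_j \equiv 0 \pmod{p}$, so Proposition~\ref{prop:siegel-weight-congruence-homogeous-components} applies and yields, for each $a \in \ZZ \slashdiv (p - 1) \ZZ$,
\begin{gather*}
  \sum_{j \in a + (p - 1) \ZZ} \Phi_j \equiv 0 \pmod{p}
\text{.}
\end{gather*}

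Now argue by contradiction: if $k \not\equiv k' \pmod{p - 1}$, then taking $a$ to be the residue class of $k$ leaves $\Phi_k = \Phi$ as the only nonzero term of the family in $a + (p - 1) \ZZ$ (the index $k'$ lies in a different class), whence $\Phi \equiv 0 \pmod{p}$, contradicting the hypothesis. Hence $k \equiv k' \pmod{p - 1}$.

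For the final statement, note that the set $\{k' \in \ZZ \,:\, \exists\, \Psi \in \rmM^{(g)}_{k'}(\ZZ_{(p)}),\ \Psi \equiv \Phi \pmod{p}\}$ is a nonempty set of nonnegative integers (it contains $k$), so the infimum defining $\omega(\Phi)$ is attained by some $\Psi \in \rmM^{(g)}_{\omega(\Phi)}(\ZZ_{(p)})$ with $\Psi \equiv \Phi \pmod{p}$; since $\Phi \not\equiv 0 \pmod{p}$, the first part applied to $\Phi$ and $\Psi$ gives $\omega(\Phi) \equiv k \pmod{p - 1}$.

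There is no real obstacle here: the entire content of the corollary is packaged inside Proposition~\ref{prop:siegel-weight-congruence-homogeous-components}, which in turn rests (via the Fourier--Jacobi expansion and Remark~\ref{rm:semi-definite-indices}) on the corresponding statement for Jacobi forms, Proposition~\ref{prop:jacobi_weight_congruence}. The only points requiring a moment's care are the reduction to a single weight-indexed family so that the proposition is literally applicable, and the observation that the filtration infimum is in fact a minimum.
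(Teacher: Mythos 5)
Your proof is correct and matches the paper's intent exactly: the paper offers no separate argument, simply noting that the corollary is a special case of Proposition~\ref{prop:siegel-weight-congruence-homogeous-components}, which is precisely the two-term-family reduction you carry out. Your added remark that the infimum defining $\omega(\Phi)$ is attained is a sensible (and correct) bit of bookkeeping the paper leaves implicit.
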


B\"ocherer and Nagaoka~\cite{Bo-Na-MathAnn07} establish the existence of Siegel modular forms which are congruent to~$1$ modulo~$p$, which is an important ingredient in determining the structure of $\rmM^{(g)}_\bullet(\bbF_p)$.
\begin{theorem}[\cite{Bo-Na-MathAnn07}]
\label{thm:existence-of-1-mod-siegel-forms}
Let $p\geq g+3$.  There exists a $\Psi_{p-1} \in \rmM^{(g)}_{p - 1}(\ZZ)$ such that $\Psi_{p-1} \equiv 1 \pmod{p}$.
\end{theorem}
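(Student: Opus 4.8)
The plan is to realize $\Psi_{p-1}$ as a scalar multiple of the degree-$g$ Siegel Eisenstein series $E^{(g)}_k$ of weight $k := p-1$, normalized to have constant Fourier coefficient~$1$. Since $p \ge g+3$, the weight satisfies $k \ge g+2 > g+1$, so $E^{(g)}_k$ is given by an absolutely convergent series, is a genuine holomorphic Siegel modular form of degree~$g$ and weight~$k$, and (classically) has rational Fourier coefficients. I would prove that (i)~$c(E^{(g)}_k; T) \in \ZZ_{(p)}$ for every~$T$, and (ii)~$c(E^{(g)}_k; T) \equiv 0 \pmod p$ for every $T \ne 0$ --- that is, $E^{(g)}_k \equiv 1 \pmod p$. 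Clearing the (prime-to-$p$) common denominator of the coefficients of $E^{(g)}_k$ and then rescaling by an integer~$\equiv 1 \pmod p$ produces a form in $\rmM^{(g)}_{p-1}(\ZZ)$ that is still $\equiv 1 \pmod p$, which is the required~$\Psi_{p-1}$.

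For a Fourier index~$T$ with $\rk T = r < g$ I would argue by induction on~$g$. Because $c(E^{(g)}_k; T) = c(E^{(g)}_k; \rT U T U)$ for all $U \in \GL{g}(\ZZ)$, one may take $T = \left(\begin{smallmatrix} T' & 0 \\ 0 & 0 \end{smallmatrix}\right)$ with $T'$ of size~$r$; applying the Siegel operator, which in the range of convergence sends $E^{(g)}_k$ to $E^{(g-1)}_k$, and iterating, gives $c(E^{(g)}_k; T) = c(E^{(r)}_k; T')$. The base case $g = 1$ is the classical congruence $E^{(1)}_{p-1} = E_{p-1} \equiv 1 \pmod p$, which follows from the von~Staudt--Clausen theorem applied to~$B_{p-1}$, and the induction hypothesis is available since $p \ge g+3 \ge r+3$.

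The substantive case is $\rk T = g$, where I would use Siegel's explicit formula for the Fourier coefficients (in the refinements of Kaufhold, Kitaoka and Katsurada). Up to an elementary factor that is a $p$-adic unit, $c(E^{(g)}_k; T)$ is a quotient whose denominator is $\zeta(1-k)\,\prod_{j=1}^{\lfloor g/2 \rfloor}\zeta(1-2k+2j)$ and whose numerator is a product of $p$-integral quantities: the local Siegel series, which (after their Euler factors are absorbed into the zeta denominators) become polynomials with integer coefficients and constant term~$1$ evaluated at a positive power of~$p$ (positive since $k > g+1$), and, for $g \ge 2$, a value of the Dirichlet $L$-function of the quadratic character attached to~$\disc T$. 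The $p$-adic bookkeeping is then as for $g=1$: by von~Staudt--Clausen, $v_p\big(\zeta(1-k)\big) = v_p\big(B_{p-1}/(p-1)\big) = -1$, so $1/\zeta(1-k)$ is divisible by~$p$; for $1 \le j \le \lfloor g/2 \rfloor$ one has $2 \le 2j \le g \le p-3 < p-1$, so $(p-1)\nmid 2k-2j$ and $\zeta(1-2k+2j)$ is a $p$-adic unit; the $L$-value is $p$-integral by the generalized von~Staudt--Clausen theorem; and each local Siegel series is a $p$-integral number, moreover $\equiv 1 \pmod p$ when $p \nmid \det(2T)$ --- here $p \ge g+3$ enters, making the exponent of~$p$ in its argument at least~$1$. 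Multiplying these contributions gives $v_p\big(c(E^{(g)}_k; T)\big) \ge 1$, proving both (i)~and~(ii) in this case.

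The step I expect to be the main obstacle is precisely this last one when $p \mid \det(2T)$: then the quadratic character of~$\disc T$ is ramified at~$p$, its $L$-value may fail to be a $p$-adic unit, and the local Siegel series at~$p$ is no longer~$\equiv 1 \pmod p$, so the crude valuation estimate can lose a power of~$p$ and must be replaced by a precise local computation of the Siegel series at the ramified prime that offsets the possible pole of the $L$-value. This local analysis is the technical core of~\cite{Bo-Na-MathAnn07}. (Alternatively, one could identify $E^{(g)}_{p-1} \bmod p$ with the Hasse invariant on the moduli space of principally polarized abelian $g$-folds over~$\bbF_p$, whose $q$-expansion is~$1$, and then invoke a lifting theorem to characteristic zero; but making such a lift unconditional again requires a lower bound on~$p$, so the Eisenstein-series construction is the cleanest route.)
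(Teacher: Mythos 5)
This statement is imported from B\"ocherer--Nagaoka \cite{Bo-Na-MathAnn07}; the paper under review gives no proof of its own beyond the citation, so the only thing to compare your attempt against is the construction in that reference. Your plan --- take the degree-$g$ Siegel Eisenstein series $E^{(g)}_{p-1}$ (convergent since $p-1>g+1$, which is exactly where $p\ge g+3$ enters), reduce degenerate indices to lower degree via the Siegel operator, and control the nondegenerate coefficients through Siegel's explicit formula --- is the natural first attempt, and the rank-reduction step and the $g=1$ base case are fine. But the proposal does not amount to a proof, for two reasons. The lesser one: you explicitly defer the case $p \isdiv \det(2T)$ (ramified character, nontrivial local Siegel series at $p$) to ``the technical core of \cite{Bo-Na-MathAnn07}''. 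Since the whole point is to supply the content of that citation, ending by citing it again leaves the statement unproved.

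The more serious problem is that a step you treat as routine is false as stated. From $(p-1)\nisdiv(2k-2j)$, the von Staudt--Clausen theorem gives only that $\zeta(1-2k+2j)=-B_{2k-2j}/(2k-2j)$ is $p$-integral, \emph{not} that it is a $p$-adic unit. By the Kummer congruences it is a unit precisely when $p$ does not divide the numerator of $B_{p-1-2j}$, i.e.\ when $(p,\,p-1-2j)$ is not an irregular pair. For instance $37$ divides the numerator of $B_{32}$; taking $p=37$, $k=36$, $j=2$ (so any $g\ge 4$), the factor $\zeta(1-2k+4)=\zeta(-67)\equiv -B_{32}/32\equiv 0 \pmod{37}$ sits in the \emph{denominator} of Siegel's formula, and your valuation count $v_p\big(c(E^{(g)}_k;T)\big)\ge 1$ degrades to $\ge 0$ with no visible compensation from the numerator (the local Siegel series and the $L$-value are merely $p$-integral there). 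So even in the ``easy'' unramified case the crude bookkeeping breaks down at irregular primes, which is one reason the literature does not simply assert $E^{(g)}_{p-1}\equiv 1\pmod p$ for all $p\ge g+3$ when $g\ge 4$, and why \cite{Bo-Na-MathAnn07} must work harder to produce \emph{some} $\Psi_{p-1}$. To repair the proposal you would need either a genuine argument that the numerator absorbs the irregular zeta factors, or a different construction of $\Psi_{p-1}$ altogether.
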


\subsection{Structure of $\rmM^{(g)}_\bullet(\bbF_p)$}
\label{sec:structureofmodularformsmodp}


In this section, we study the implications of Proposition~\ref{prop:siegel-weight-congruence-homogeous-components} for the graded rings of Siegel modular forms.


\begin{proposition}
\label{prop:structure-of-siegel-saturation-of-ideal}
For all primes~$p$, the ideal $C$ in~\eqref{eq:siegel-reduction-isomorphism} is saturated with respect to $(p)$.  That is, if $P \in C$ and $p \isdiv P$, then also $p^{-1} P \in C$.  In particular, if  $P\in C$ is homogeneous and if $\Phi_i$ are generators of $\rmM^{(g)}_k(\ZZ_{(p)})$ such that $P(\Phi_1, \ldots, \Phi_n) \equiv 0 \pmod{p}$, then $P(x_1, \ldots, x_n) \equiv 0 \pmod{p}$.
\end{proposition}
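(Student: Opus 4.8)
The plan is to deduce both assertions from the single observation that the coordinate ring $\ZZ_{(p)}[x_1,\ldots,x_n]\slashdiv C$ is torsion free as a $\ZZ_{(p)}$-module. By \eqref{eq:siegel-reduction-isomorphism} this ring is isomorphic, as a graded $\ZZ_{(p)}$-algebra, to $\rmM^{(g)}_\bullet(\ZZ_{(p)})$, and torsion freeness of the latter is immediate: a Siegel modular form is determined by its Fourier coefficients, so for each $k$ the space $\rmM^{(g)}_k(\ZZ_{(p)})$ embeds into the $\ZZ_{(p)}$-module of formal Fourier series $\sum_T c(T)\,e^{2\pi i\,\tr(TZ)}$ with all $c(T)\in\ZZ_{(p)}$; this module is a direct product of copies of the domain $\ZZ_{(p)}$, hence torsion free, and therefore so is $\rmM^{(g)}_\bullet(\ZZ_{(p)})=\bigoplus_k\rmM^{(g)}_k(\ZZ_{(p)})$.

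Granting this, I would prove the saturation statement directly. Write $\pi\colon\ZZ_{(p)}[x_1,\ldots,x_n]\thra\rmM^{(g)}_\bullet(\ZZ_{(p)})$, $x_i\mapsto\Phi_i$, for the surjection of \eqref{eq:siegel-reduction-isomorphism}, so that $\ker\pi=C$. If $P\in C$ with $p\isdiv P$ --- that is, every coefficient of $P$ lies in the maximal ideal $p\ZZ_{(p)}$ --- then $Q:=p^{-1}P$ is a genuine element of $\ZZ_{(p)}[x_1,\ldots,x_n]$, and $0=\pi(P)=p\,\pi(Q)$ in $\rmM^{(g)}_\bullet(\ZZ_{(p)})$. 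Torsion freeness forces $\pi(Q)=0$, i.e.\ $p^{-1}P=Q\in C$, which is exactly saturation with respect to $(p)$.

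For the final assertion I would argue as follows. Suppose $P$ is homogeneous of weight $k$ and $P(\Phi_1,\ldots,\Phi_n)\equiv 0\pmod{p}$; then $P(\Phi_1,\ldots,\Phi_n)$ is a weight-$k$ form all of whose Fourier coefficients lie in $p\ZZ_{(p)}$, so $G:=p^{-1}P(\Phi_1,\ldots,\Phi_n)\in\rmM^{(g)}_k(\ZZ_{(p)})$. Because \eqref{eq:siegel-reduction-isomorphism} is graded, every weight-$k$ form is the image under $\pi$ of a homogeneous polynomial of weight $k$, so I may choose such a $Q$ with $\pi(Q)=G$. Then $\pi(P-pQ)=0$, hence $P-pQ\in C$; reducing modulo $p$, the image of $P$ in $\bbF_p[x_1,\ldots,x_n]\slashdiv C$ agrees with that of $P-pQ$ and therefore vanishes --- equivalently, $P$ is congruent modulo $p$ to an element of $C$, as claimed.

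There is no substantive obstacle here: the whole content is the identification of the coordinate ring with the evidently torsion-free ring $\rmM^{(g)}_\bullet(\ZZ_{(p)})$, and everything else is bookkeeping about the quotient over the local ring $\ZZ_{(p)}$. The one point that warrants a moment's care is the last step, where one must use that \eqref{eq:siegel-reduction-isomorphism} respects the grading in order to lift $G$ to a \emph{homogeneous} $Q$; without the homogeneity hypothesis on $P$ one would recover only an inhomogeneous relation, as must happen in general --- for example $1-B$ is a relation modulo $p$ (since $\Psi_{p-1}\equiv 1\pmod{p}$) that does not lie in $C$.
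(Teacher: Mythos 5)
Your proof is correct and follows essentially the same route as the paper's: the paper's (two-line) argument likewise rests on the fact that evaluation at the $\Phi_i$ commutes with division by $p$ inside the torsion-free $\ZZ_{(p)}$-module $\rmM^{(g)}_\bullet(\ZZ_{(p)})$, so that $(p^{-1}P)(\Phi_1,\ldots,\Phi_n)=p^{-1}\bigl(P(\Phi_1,\ldots,\Phi_n)\bigr)=0$. You simply spell out more of the bookkeeping, in particular the graded lifting step for the ``in particular'' clause, which the paper leaves implicit, and your closing remark about why homogeneity is needed (the relation $1-B$) is apt.
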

\begin{proof}
Let~$P \in \ZZ_{(p)}[x_1, \ldots, x_n]$ be a polynomial whose coefficients are divisible by~$p$ and which satisfies $P(\Phi_1, \ldots, \Phi_n) = 0$.  Then $(p^{-1}P )(\Phi_1, \ldots, \Phi_n) = p^{-1}\big(P(\Phi_1, \ldots, \Phi_n)\big)= 0$, which gives the claim.
\end{proof}


We now prove Theorem~\ref{thm:structure-of-siegel-mod-p-rings}, which determines the ring structure of $\rmM^{(g)}_\bullet(\bbF_p)$.

\begin{proof}[Proof of Theorem~\ref{thm:structure-of-siegel-mod-p-rings}]
Let $P(x_1, \ldots, x_n)$ be a polynomial such that $P(\Phi_1, \ldots, \Phi_n) \equiv 0 \pmod{p}$.  We need to show that $P$ is divisible by $1-B$ in $\bbF_p[x_1, \ldots, x_n]$.

By Proposition~\ref{prop:siegel-weight-congruence-homogeous-components}, we may assume that there exists an $a \in \ZZ$ such that $P = \sum_{k \equiv a \pmod{p - 1}} P_k$ for homogeneous components $P_k$ of degree~$k$.  Let $k \in \ZZ$ be minimal subject to the condition $P_k \not\equiv 0 \pmod{p}$.  Replacing $P$ by $Q:= P - (1 - B) P_k$ yields a polynomial whose homogeneous components~$Q_{k'}$ vanish (modulo~$p$) for $k' < k + p - 1$.
By iterating this process, we may assume that $P \equiv P_k \pmod{p}$ for some $k$.  Then $P(\Phi_1, \ldots, \Phi_n) \equiv P_k(\Phi_1, \ldots, \Phi_n) \equiv 0 \pmod{p}$ for this $k$, and Proposition~\ref{prop:structure-of-siegel-saturation-of-ideal} implies that $P(x_1, \ldots, x_n) \equiv P_k(x_1,\ldots x_n)\equiv 0 \pmod{p}$, which completes the proof.
\end{proof}
\begin{proof}[{Proof of Corollary~\ref{cor:siegel-characterization-of-maximal-filtration}}]
Let $\Phi'\in \rmM^{(g)}_{k'}(\ZZ_{(p)})$ with $k' < k$ and such that $\Phi \equiv \Phi' \pmod{p}$, and suppose that $\Phi'$ corresponds to the polynomial $A'$ under~\eqref{eq:siegel-reduction-isomorphism}.  We employ Theorem~\ref{thm:structure-of-siegel-mod-p-rings} to find that 
\begin{gather}
\label{eq:B-divides-A}
(1 - B) P \equiv (A - A') \pmod{p}
\end{gather}
for some (not necessarily homogeneous) $P\in \bbF_p [x_1, \ldots, x_n]$.  Write $P = \sum P_l$ for polynomials $P_l$ which are homogeneous of degree~$l$ with respect to the weighted grading of $\bbF_p [x_1, \ldots, x_n]$.  Comparing the left and right hand side of \eqref{eq:B-divides-A} shows that
$B P_{k - (p - 1)} \equiv A\pmod{p}$.  Hence $B$ divides $A$.  On the other hand, if $B$ divides $A$, then $\Phi=\Psi_{p-1}\Phi'$ for some $\Phi'\in \rmM^{(g)}_{k'}(\ZZ_{(p)})$ and $\omega(\Phi) < k$.
%
%
\end{proof}

We end this section with two Lemmas that are needed in the proof of Proposition~\ref{prop:filtration-of-D-operator}.

\begin{lemma}
\label{lem:existence-of-relatively-prime-polynomial}
Let $p \ge g + 3$.  For every sufficiently large~$k$ there exists a
$\Phi \in \rmM^{(g)}_k(\ZZ_{(p)})$ such that $P_\Phi$ and $B$ are coprime in $\ZZ_{(p)}[x_1, \ldots, x_n] \slashdiv C$, where $P_\Phi$ is the polynomial corresponding to $\Phi$ under the isomorphism in~\eqref{eq:siegel-reduction-isomorphism}.
\end{lemma}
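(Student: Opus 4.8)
The plan is to work inside the graded ring $R := \ZZ_{(p)}[x_1,\dots,x_n]\slashdiv C\cong\rmM^{(g)}_\bullet(\ZZ_{(p)})$, which is a Noetherian integral domain (the product of two nonzero Siegel modular forms is again nonzero), finitely generated over $R_0=\ZZ_{(p)}$ by homogeneous elements of positive degree, of Krull dimension $\binom{g+1}{2}+2\ge 3$. The element $B$ attached to $\Psi_{p-1}$ (which exists by Theorem~\ref{thm:existence-of-1-mod-siegel-forms}) is homogeneous of degree $p-1\ge 2$, nonzero, and not a unit, so by Krull's principal ideal theorem there are only finitely many homogeneous primes $\frak p_1,\dots,\frak p_r\subset R$ minimal over $BR$, each of height~$1$. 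If $D$ divides both $P_\Phi$ and $B$ and is not a unit, then $V(D)\subseteq V(B)$ is nonempty of pure codimension one, so one of its components is some $\frak p_j$; then $D\in\frak p_j$, hence $P_\Phi\in(D)\subseteq\frak p_j$. Thus it suffices to produce, for every large $k$, a form $\Phi\in R_k:=\rmM^{(g)}_k(\ZZ_{(p)})$ that lies in none of the $\frak p_i$; by Corollary~\ref{cor:siegel-characterization-of-maximal-filtration} such a $\Phi$ moreover has maximal filtration $\omega(\Phi)=k$.

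Next I would prove a one-prime version: for each fixed $i$, that $R_k\not\subseteq\frak p_i$ for all large $k$. Put $G_i:=\{\,a\ge 1:R_a\not\subseteq\frak p_i\,\}$. Since $R_0=\ZZ_{(p)}$ is a domain, the irrelevant ideal $R_{\ge 1}$ is prime of height $\dim R-1\ge 2$, so $\frak p_i\not\supseteq R_{\ge 1}$ and $G_i\ne\emptyset$. As $\frak p_i$ is prime and $R$ a domain, $G_i$ is closed under addition: $u\in R_a\setminus\frak p_i$ and $v\in R_b\setminus\frak p_i$ force $0\ne uv\in R_{a+b}\setminus\frak p_i$. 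Finite generation gives $R_k=\sum_{a+b=k,\ a,b\ge 1}R_aR_b$ once $k$ exceeds twice the largest generator degree, so if $R_k\subseteq\frak p_i$ for such a $k$ then every decomposition $k=a+b$ has $a\notin G_i$ or $b\notin G_i$; fixing $a_0\in G_i$ this forces $R_{k-a_0}\subseteq\frak p_i$, and iterating along all generators of the numerical semigroup $G_i$ would eventually yield $R_{\ge N}\subseteq\frak p_i$ for some $N$, contradicting $\mathrm{ht}\,\frak p_i=1$. Hence there is $k_0(i)$ with $R_k\not\subseteq\frak p_i$ for $k\ge k_0(i)$. For $k\ge\max_i k_0(i)$, the free $\ZZ_{(p)}$-module $R_k$ is not a union of the finitely many proper submodules $R_k\cap\frak p_i$ (because $\ZZ_{(p)}$ is infinite), so we may pick $\Phi\in R_k\setminus\bigcup_i\frak p_i$, and by the previous paragraph $P_\Phi$ and $B$ are coprime.

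The step I expect to be the real difficulty is the last part of the semigroup argument: ruling out that $G_i$ is a proper sub-semigroup, e.g. contained in some $d\,\ZZ_{\ge 1}$ with $d>1$, in which case $R_m\subseteq\frak p_i$ for every $m$ outside a residue class — and this on its own need not force $R_{\ge N}\subseteq\frak p_i$. What is really needed is that a height-one homogeneous prime lying over $(B)$ cannot contain $R_m$ for all $m$ outside a finite union of arithmetic progressions. This is where the hypothesis $p\ge g+3$ and the special nature of $B$ have to be used: $\rmM^{(g)}_\bullet$ carries algebraically independent forms of small, coprime weights, and because $\Psi_{p-1}\equiv 1\pmod p$ the divisor $V(B)$ cannot share a component with the divisors cut out by those forms; making this quantitative is the crux. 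Granting it, the same bookkeeping yields an explicit $k_0$ in terms of the weights of a generating set of $\rmM^{(g)}_\bullet(\ZZ_{(p)})$ and of the minimal primes of $(B)$, and pins down precisely which large weights $k$ are admissible.
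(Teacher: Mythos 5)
Your reduction to prime avoidance is fine as far as it goes, but the step you flag yourself is a genuine gap, not a technicality, and it is exactly where the content of the lemma lives. A homogeneous height-one prime in a graded domain of large dimension really \emph{can} contain the whole graded piece $R_k$ for every $k$ in a congruence class: already for $g=2$, Igusa's structure theorem says every odd-weight form is divisible by the weight-$35$ cusp form $\chi_{35}$, so any minimal (height-one) prime of $(\chi_{35})$ contains $\rmM^{(2)}_k(\ZZ_{(p)})$ for all odd $k$. Hence your semigroup $G_i$ can genuinely have $\gcd$ larger than that of the weight semigroup, and excluding this for the minimal primes of $(B)$ is not a formal consequence of height considerations --- it requires actual input about $B$, which your last paragraph only gestures at ("granting it"). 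A secondary issue: the final avoidance step "$R_k$ is not a union of the proper submodules $R_k\cap\frakp_i$ because $\ZZ_{(p)}$ is infinite" is not valid for arbitrary proper submodules (e.g.\ $\ZZ_{(p)}^2$ is the union of its $p+1$ index-$p$ submodules); you need that each $R_k\cap\frakp_i$ is saturated, which holds here because $p\notin\frakp_i$ (a height-one prime containing $B$ cannot contain $p$, since $B\equiv 1\pmod p$ forces $(p,B)$ to have height two), but this must be said.

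The paper avoids all of this with a short counting argument in $R_\bullet\otimes\bbF_p$: list the finitely many irreducible divisors $D_i$ of $B\bmod p$, of degrees $k_i$; the elements of $R_k$ not coprime to $B$ lie in $\bigcup_i D_iR_{k-k_i}$, which has at most $\sum_i p^{H(k-k_i)}$ elements, while $|R_k\otimes\bbF_p|=p^{H(k)}$; since the Hilbert polynomial $H$ has degree $\tfrac{g(g+1)}{2}>1$, one has $H(k)-H(k-k_i)\to\infty$ and the union cannot exhaust $R_k$ for large $k$ (the case $g=1$ being quoted from Swinnerton-Dyer). This needs no knowledge of which height-one primes contain $B$; in effect it \emph{proves}, rather than assumes, that no irreducible factor of $B$ divides every form of a given large weight. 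If you want to rescue your setup, the cleanest repair is to replace the semigroup argument by exactly this dimension count applied to the subspaces $R_k\cap\frakp_i$.
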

\begin{proof}
Theorem~2 of~\cite{SwD-l-adic} treats the case~$g = 1$.  Assume that $g > 1$.  It suffices to show the statement in $R_\bullet := \big( \ZZ_{(p)}[x_1, \ldots, x_n] \slashdiv C \big) \otimes \bbF_p$.  There are finitely many irreducible divisors $D_i \in R_\bullet$ of $B \pmod{p}$.  Write $k_i$ for their degrees, and $H(k)$ for the Hilbert polynomial of~$R_\bullet$, whose degree equals~$\frac{g(g + 1)}{2} > 1$.  The set of elements in $R_k$ which are divisible by $D_i$ equals $D_i R_{k - k_i}$.  Hence the set of elements in $R_k$ which are not coprime to~$B$ is contained in the union of hyperplanes $D_i R_{k - k_i}$.  Thus, we are reduced to showing that $p^{H(k)} - \sum_i p^{H(k - k_i)} > 0$ for large enough~$k$.  This is clear, since the degree of $H(k)$ is greater than~$1$, and therefore $H(k) - H(k - k_i) \rightarrow \infty$ as $k \rightarrow \infty$.
\end{proof}

\begin{lemma}
\label{la:MFp-integral-domain}
Let $p$ be any prime.  Let $\Phi, \Psi \in \rmM^{(g)}_\bullet(\ZZ_{(p)})$ such that $\Phi, \Psi \not \equiv 0 \pmod{p}$.  Then $\Phi \Psi \not\equiv 0 \pmod{p}$.  In particular, $\rmM^{(g)}_\bullet(\bbF_p)$ is an integral domain.
\end{lemma}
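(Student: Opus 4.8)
The plan is to reduce the statement about Siegel modular forms to a statement about their Fourier--Jacobi coefficients, and ultimately to an integrality statement about the theta decomposition, which lands us in the ring of ordinary (vector-valued) elliptic modular forms where the fact is already known. First I would dispose of the degenerate index by invoking Remark~\ref{rm:semi-definite-indices}, so that it suffices to treat Jacobi forms of nondegenerate index $M$. Then, using the Fourier--Jacobi expansion $\Phi = \sum_M \Phi_M\, e^{2\pi i\,\tr(MW)}$ and $\Psi = \sum_M \Psi_M\, e^{2\pi i\,\tr(MW)}$, I would pick $M_0$ minimal (with respect to a suitable ordering of semi-positive definite $M$, e.g.\ by the Gram matrix entered lexicographically after diagonalization) such that $\Phi_{M_0}\not\equiv 0\pmod p$, and likewise $M_1$ minimal with $\Psi_{M_1}\not\equiv 0\pmod p$. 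The Fourier--Jacobi coefficient of $\Phi\Psi$ at $M_0 + M_1$ then equals $\Phi_{M_0}\Psi_{M_1}$ modulo $p$, because every other contributing pair $(M,M')$ with $M + M' = M_0 + M_1$ has either $M < M_0$ or $M' < M_1$ and hence a factor vanishing mod $p$. So the whole lemma comes down to showing: if $\phi\in\rmJ_{k,M}(\ZZ_{(p)})$ and $\psi\in\rmJ_{k',M'}(\ZZ_{(p)})$ with $\phi,\psi\not\equiv 0\pmod p$, then $\phi\psi\not\equiv 0\pmod p$ as a Jacobi form of index $M+M'$.

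For this Jacobi-form statement I would apply the restriction technique developed in Section~\ref{sec:jacobiformsmodp}. By Proposition~\ref{prop:jacobi_restriction_vector_for_r}, there exists $s\in\ZZ^{l}$ (working with $l\times l$ index matrices, $l$ the common size after possibly padding) such that $\phi[s]\not\equiv 0\pmod p$ and $\psi[s]\not\equiv 0\pmod p$ simultaneously --- one chooses a single $s$ separating the finitely many Fourier indices below the relevant bound for both forms at once. Since $(\phi\psi)[s] = \phi[s]\,\psi[s]$, and $\phi[s]\in\rmJ_{k,M[s]}(\ZZ_{(p)})$, $\psi[s]\in\rmJ_{k',M'[s]}(\ZZ_{(p)})$ are Jacobi forms on $\HS\times\CC$ (scalar index), it suffices to prove the non-vanishing of a product of two nonzero Jacobi forms of scalar index. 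For scalar index one can argue directly: either use the theta decomposition $\Theta_m$ of~\eqref{eq:theta-decomposition}, which is injective and preserves the set of Fourier coefficients, to transfer the question to vector-valued elliptic modular forms, and then use that a product of nonzero (possibly vector-valued) elliptic modular forms with $\ZZ_{(p)}$-coefficients is nonzero mod $p$ --- which follows from the classical fact (Serre, Swinnerton-Dyer; cf.\ Proposition~\ref{prop:congruences-for-elliptic-modular-forms}) that $\rmM^{(1)}_\bullet(\bbF_p)$ is an integral domain --- or, more elementarily, compare leading $q$-coefficients using the Taylor-expansion argument already exploited in the proof of Proposition~\ref{prop:improved-Sofer}.

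The main obstacle I anticipate is the bookkeeping around the orderings and the reduction to scalar index: one must be careful that the ``minimal $M$'' argument for the Fourier--Jacobi coefficients really isolates $\Phi_{M_0}\Psi_{M_1}$ (this needs a total ordering on semi-positive definite half-integral matrices compatible with addition in the sense that $M < M_0$ or $M' < M_1$ whenever $M+M' = M_0+M_1$ and $(M,M')\neq(M_0,M_1)$ --- the ordering by trace, refined lexicographically, works), and that the single restriction vector $s$ can be chosen to work for $\phi$, $\psi$, and hence $\phi\psi$ simultaneously. Once the problem is in scalar index, the theta-decomposition step and the integral-domain property of $\rmM^{(1)}_\bullet(\bbF_p)$ are standard. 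The very last assertion, that $\rmM^{(g)}_\bullet(\bbF_p)$ is an integral domain, is then immediate: it is a quotient of a polynomial ring, it is nonzero (it contains $1$), and the displayed statement says exactly that it has no zero divisors among the images of $\ZZ_{(p)}$-forms, which is all of it.
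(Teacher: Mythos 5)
Your overall strategy is sound, but it is a genuinely different and considerably more roundabout route than the paper's. The paper runs your ``leading coefficient'' idea \emph{directly} on the Fourier expansion of the Siegel modular forms: it fixes an explicit total order on half-integral matrices $T$ (lexicographic on the diagonal entries, then on all entries), which is compatible with addition, picks the minimal $T_\Phi$, $T_\Psi$ with $c(\Phi;T_\Phi), c(\Psi;T_\Psi)\not\equiv 0\pmod p$, and concludes $c(\Phi\Psi;T_\Phi+T_\Psi)\equiv c(\Phi;T_\Phi)c(\Psi;T_\Psi)\not\equiv 0\pmod p$ in one step. Your proof performs the same leading-term extraction but only at the level of Fourier--Jacobi coefficients, and then has to prove separately that a product of two Jacobi forms that are nonzero mod $p$ is nonzero mod $p$, via restriction to scalar index and then elliptic modular forms. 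Each of those reductions can be made to work (your translation-invariant ordering on the indices $M$ is the right condition, and Proposition~\ref{prop:jacobi_restriction_vector_for_r} does extend to a single $s$ serving two different indices), but they buy you nothing: the very same lexicographic-leading-coefficient argument you invoke at the scalar-index stage already applies verbatim to the $(n,r)$-indices of Jacobi forms of arbitrary index, and indeed to the $T$-indices of the Siegel forms themselves, which is the paper's shortcut. One genuine caution about your first proposed endgame: the theta decomposition~\eqref{eq:theta-decomposition} does not transport the \emph{product} of two Jacobi forms to the product (or a usable tensor product) of the associated vector-valued forms --- the components of $\Theta_{m+m'}(\phi\psi)$ are sums $\sum f_\mu g_\nu$ over cosets, and such a sum can vanish mod $p$ even when individual $f_\mu g_\nu$ do not --- so the integral-domain property of $\rmM^{(1)}_\bullet(\bbF_p)$ cannot be invoked directly there; you should rely on your second, elementary option (leading $(n,r)$-coefficients) instead.
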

\begin{proof}
Consider the order $T = (T_{ij}) < T' = (T'_{ij})$ defined by
\begin{gather*}
  \big( \exists_{1 \le i \le g} :
        T_{ii} < T'_{ii} \wedge (\forall_{ 1 \le j < i}\, T_{jj} = T'_{jj} ) \big)
  \;\vee\;
\\
  \Big( (\forall_{1 \le i \le g}\, T_{ii} = T'_{ii}) \,\wedge\,
        \big(\exists_{1 \le i,j \le g} :
             T_{ij} < T'_{ij} \,\wedge\,
             \forall_{\substack{ 1 \le i' < i \\ 1 \le j' \le g }}\, T_{i' j'} = T'_{i' j'} \,\wedge\,
             \forall_{1 \le j' < j}  T_{i j'}\, = T'_{i j'}  \big)
       \Big)
\text{.}
\end{gather*}
Choose minimal indices $T_\Phi$ and $T_\Psi$ with respect to the order subject to the condition that $c(\Phi; T_\Phi) \not\equiv 0\pmod{p}$ and $c(\Psi; T_\Psi) \not\equiv 0 \pmod{p}$, respectively.  If $T_{\Phi} + T_{\Psi}=T_1 + T_2 $, then either $T_{\Phi}=T_1$ and $T_{\Psi}=T_2$, or one of the following holds:
\begin{gather*}
  T_1 < T_{\Phi}
\;\,\text{or}\;\,
  T_1 < T_{\Psi}
\;\,\text{or}\;\,
  T_2 < T_{\Phi}
\;\,\text{or}\;\,
  T_2 < T_{\Psi}
\text{.}
\end{gather*}
We find that 
\begin{gather*}
  c(\Phi \Psi; T_\Phi + T_\Psi)
\equiv
  c(\Phi; T_\Phi)
  c(\Psi; T_\Psi)
\not\equiv
  0 \pmod{p}
	\text{,}
\end{gather*}
which gives the claim.
\end{proof}

\subsection{Theta cycles and $U(p)$-congruences}
\label{sec:ThetacyclesandUcongruences}

In this section, we explore theta cycles of Siegel modular forms, and we prove Theorem~\ref{thm:Siegel-U_p}.  From the introduction, recall the generalized theta operator
\begin{gather*}
\mathbb{D}:=(2\pi i)^{-g}\det \partial_Z
\text{.}
\end{gather*}
B\"ocherer and Nagaoka \cite{Bo-Na-MathAnn07} prove that if $\Phi \in \rmM^{(g)}_k(\ZZ_{(p)})$, then 
\begin{gather}
\label{eq:action-of-D-operator}
\mathbb{D}(\Phi)\in   \rmM_{k+p+1}^{(g)}(\bbF_p).
\end{gather}  
Our following result on the filtration of $\mathbb{D}(\Phi)$ generalizes a classical result on 
elliptic modular forms (see \cite{Serre-p-adic, SwD-l-adic}) and also Proposition~4 of~\cite{C-C-R-Siegel} to the case of Siegel modular forms of degree $g>2$.
\begin{proposition}
\label{prop:filtration-of-D-operator}
Let $p\geq g+3$.  Let $\Phi \in \rmM^{(g)}_k(\ZZ_{(p)})$ with $2k\geq g$ 
and suppose that there exists a Fourier-Jacobi coefficient
$\Phi_M$ of $\Phi$ such that $p\,\nmid \,\det(2M)$ and $\omega\big(\Phi_M\big)=\omega(\Phi)$.  Then
\begin{equation*}
\omega\big(\mathbb{D}(\Phi)\big)\leq \omega(\Phi)+p+1,
\end{equation*}
\noindent
with equality if and only if $p\,\nmid \, \left(2\omega(\Phi)-g+1\right)$.
\end{proposition}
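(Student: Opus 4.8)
The plan is to transport the statement to the heat operator on Jacobi forms via the Fourier--Jacobi expansion and then apply Proposition~\ref{prop: filtration-of-heat-application}. First I record the basic identity: writing $Z=\left(\begin{smallmatrix}\tau & z\\ \rT z & W\end{smallmatrix}\right)$ with $W$ of size $g-1$ and expanding $\det\partial_Z$ along its first row and column gives $\det\partial_Z=\partial_\tau\det(\partial_W)-\tfrac14\,\partial_W^{\#}[\partial_z]$; since $\partial_W$ acts on $e^{2\pi i\,\tr(NW)}$ as multiplication by $2\pi iN$, a direct computation shows that for every positive definite $(g-1)\times(g-1)$ index $N$ the $N$-th Fourier--Jacobi coefficient of $\mathbb{D}\Phi$ equals $\tfrac14\,\bbL_N(\Phi_N)$, with $\bbL_N$ the heat operator of Section~\ref{sec:HeatcyclesandUcongruences}. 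For an index $N$ of rank $<g-1$ the corresponding Fourier--Jacobi coefficient of $\mathbb{D}\Phi$ vanishes identically, because any semi-positive definite half-integral $T$ whose lower right block is the singular matrix $N$ is itself singular, so $\det T=0$. The inequality $\omega(\mathbb{D}\Phi)\le\omega(\Phi)+p+1$ is \eqref{eq:action-of-D-operator} applied to a representative of $\Phi$ of weight $\omega(\Phi)$, using that $\mathbb{D}$ commutes with reduction modulo $p$ (it multiplies the $T$-th Fourier coefficient by $\det T\in\ZZ_{(p)}$).

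For the equality when $p\nmid(2\omega(\Phi)-g+1)$: since $p\nmid\det(2M)$, the index $M$ is positive definite of size $g-1$, and by hypothesis $\omega(\Phi_M)=\omega(\Phi)$. Applying Proposition~\ref{prop: filtration-of-heat-application} to $\Phi_M\in\rmJ_{k,M}(\ZZ_{(p)})$ (with $l=g-1$) gives $\omega\big(\bbL_M(\Phi_M)\big)=\omega(\Phi)+p+1$, since $p\nmid(2\omega(\Phi)-(g-1))\det(2M)$. As $\tfrac14$ is a unit modulo $p$ and the $M$-th Fourier--Jacobi coefficient of $\mathbb{D}\Phi$ is $\tfrac14\bbL_M(\Phi_M)$, this coefficient already has filtration $\omega(\Phi)+p+1$; since the filtration of a Fourier--Jacobi coefficient never exceeds that of the form, $\omega(\Phi)+p+1\le\omega(\mathbb{D}\Phi)\le\omega(\Phi)+p+1$.

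For the strict inequality when $p\mid(2\omega(\Phi)-g+1)$, the key point is that the weight now drops in \emph{every} Fourier--Jacobi coefficient simultaneously. Fix $\Psi\in\rmM^{(g)}_{\omega(\Phi)}(\ZZ_{(p)})$ with $\Psi\equiv\Phi\pmod p$; then every $\Psi_N$ has weight $\omega(\Phi)$, so \eqref{heat-trans} gives $\bbL_N(\Psi_N)=\tfrac{(2\omega(\Phi)-(g-1))\det N}{6}\,\Psi_N E_2+\widehat{\Psi_N}$ with $\widehat{\Psi_N}\in\rmJ_{\omega(\Phi)+2,N}(\ZZ_{(p)})$, and here the scalar $\tfrac{(2\omega(\Phi)-(g-1))\det N}{6}$ is $\equiv0\pmod p$ for \emph{all} $N$ at once, precisely because the factor $2\omega(\Phi)-(g-1)$ does not depend on $N$. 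Hence every Fourier--Jacobi coefficient of $\mathbb{D}\Phi$ is congruent, modulo $p$, to a Jacobi form of weight $\omega(\Phi)+2$ (using $\mathbb{D}\Phi\equiv\mathbb{D}\Psi\pmod p$, the identity $(\mathbb{D}\Psi)_N=\tfrac14\bbL_N(\Psi_N)$, and the degenerate coefficients being $0$). It then remains to descend from Fourier--Jacobi coefficients to the form: a mod-$p$ Siegel modular form all of whose Fourier--Jacobi coefficients are reductions of weight-$\ell$ Jacobi forms has filtration $\le\ell$; applying this with $\ell=\omega(\Phi)+2$ yields $\omega(\mathbb{D}\Phi)\le\omega(\Phi)+2<\omega(\Phi)+p+1$, since $p\ge5$.

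This descent is the main obstacle. I would try to prove it through the structure theory of Section~\ref{sec:structureofmodularformsmodp}: by Corollary~\ref{cor:siegel-characterization-of-maximal-filtration}, $\omega(\mathbb{D}\Phi)<\omega(\Phi)+p+1$ is equivalent to $B$ dividing the polynomial of a weight-$(\omega(\Phi)+p+1)$ lift of $\mathbb{D}\Phi$, so one must upgrade the coefficient-wise drop to such a divisibility. A clean way to do this would be to establish the Siegel ``Serre derivative'' identity exhibiting $\mathbb{D}\Psi-\tfrac{2\omega(\Phi)-(g-1)}{24}\,E_2\,\mathbb{D}_W\Psi$, where $\mathbb{D}_W:=(2\pi i)^{-(g-1)}\det\partial_W$ acts only on the $W$-block, as a genuine Siegel modular form of weight $\omega(\Phi)+2$ with Fourier--Jacobi coefficients $\tfrac14\widehat{\Psi_N}$; when $p\mid(2\omega(\Phi)-g+1)$ the correction term then vanishes modulo $p$ and places $\mathbb{D}\Phi$ in weight $\omega(\Phi)+2$. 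Here the freeness of the modules $\rmJ_{\bullet,N}(\ZZ_{(p)})$ (Theorem~\ref{thm:exception-set-is-trivial}) should allow one to verify modularity and $p$-integrality coefficient by coefficient. The remaining details are routine: reducing semi-definite Fourier--Jacobi indices to positive definite ones via Remark~\ref{rm:semi-definite-indices}, and using the hypothesis $2k\ge g$ to keep the Jacobi spaces that arise from being trivially zero.
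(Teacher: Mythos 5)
The first half of your argument (the inequality, and the equality in the case $p\nmid(2\omega(\Phi)-g+1)$ via Proposition~\ref{prop: filtration-of-heat-application} applied to the distinguished coefficient $\Phi_M$, together with the observation that the filtration of a Fourier--Jacobi coefficient is bounded by that of the form) is correct and is exactly the paper's argument. The gap is in the case $p\mid(2\omega(\Phi)-g+1)$, and you have correctly identified where it is but not closed it. The descent principle you need --- that a form each of whose Fourier--Jacobi coefficients is congruent mod $p$ to a Jacobi form of weight $\ell$ has filtration $\le\ell$ --- is precisely the content of the hard direction, and it does not follow from anything in Sections~\ref{sec:jacobiforms} or~\ref{sec:structureofmodularformsmodp}: the weight-$(\omega(\Phi)+2)$ Jacobi forms $\widehat{\Psi_N}$ are produced one index $N$ at a time, and there is no a priori reason they assemble into a single element of $\rmM^{(g)}_{\omega(\Phi)+2}(\ZZ_{(p)})$. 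Your proposed fix, the ``Serre derivative'' $\mathbb{D}\Psi-\tfrac{2\omega(\Phi)-(g-1)}{24}\,E_2\,\mathbb{D}_W\Psi$, does not work as stated: $E_2$ is a function of $\tau$ alone and $\mathbb{D}_W$ differentiates only the $W$-block, so the correction term singles out the splitting $Z=\left(\begin{smallmatrix}\tau & z\\ \rT z & W\end{smallmatrix}\right)$ and is not covariant under the elements of $\Sp{g}(\ZZ)$ that mix the two blocks. Establishing modularity of such a combination is not a routine verification ``coefficient by coefficient''; it is the whole difficulty, and it is exactly why B\"ocherer--Nagaoka's $\mathbb{D}$-operator results require the Rankin--Cohen machinery rather than a naive Serre derivative.

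The paper closes this case by an entirely different mechanism, which you may want to compare. One chooses (Lemma~\ref{lem:existence-of-relatively-prime-polynomial}) an auxiliary $\Upsilon\in\rmM^{(g)}_{k'}(\ZZ_{(p)})$ with $p\nmid\tfrac{(2k')!}{(2k'-g)!}$ whose polynomial $P_\Upsilon$ is coprime to $B$, and invokes the Eholzer--Ibukiyama/B\"ocherer--Nagaoka bracket $\{\Phi,\Upsilon\}\in\rmM^{(g)}_{k+k'+2}(\ZZ_{(p)})$, which under the hypothesis $p\mid(2k-g+1)$ collapses modulo $p$ to a unit times $\mathbb{D}(\Phi)\Upsilon$. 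If $\omega(\mathbb{D}(\Phi))$ were $k+p+1$, then Corollary~\ref{cor:siegel-characterization-of-maximal-filtration} and the integral-domain property (Lemma~\ref{la:MFp-integral-domain}) would force $\omega(\mathbb{D}(\Phi)\Upsilon)=k+k'+p+1$, contradicting $\omega(\{\Phi,\Upsilon\})\le k+k'+2$. This sidesteps the descent problem entirely: the genuinely modular object is the bracket, not a corrected $\mathbb{D}\Psi$. To complete your proof you would need either to reproduce this bracket argument or to actually prove your descent principle, neither of which is supplied by the steps you call routine.
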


\begin{proof}
We proceed exactly as in \cite{C-C-R-Siegel}, and we assume that $\omega(\Phi)=k$.  Consider the Fourier-Jacobi expansion
\begin{gather*}
  \Phi(Z)
=
  \Phi(\tau,z,W)
=
  \sum_{M=\rT{M}\geq 0} \hspace{-.5em}
  \Phi_{M}(\tau,z)\, e^{2\pi i\,\tr(MW)}
\text{,}
\end{gather*}
\noindent
where $\Phi_{M}\in\rmJ_{k, M}(\ZZ_{(p)})$.  Then
\begin{gather*}
  \mathbb{D}(\Phi)
=
  \sum_{M=\rT{M}\geq 0} \hspace{-.3em}
  \bbL_M(\Phi_{M}(\tau,z))\, e^{2\pi i\,\tr(MW)}.
\end{gather*}
Let $\Phi_M$ such that $p\,\nmid \,\det(2M)$ and $\omega\big(\Phi_M\big)=\omega(\Phi)$.  If $p \,\nmid \, \left(2k-g+1\right)$, then Proposition~\ref{prop: filtration-of-heat-application} implies that $\omega\big(\bbL_M(\Phi_{M})\big) = k+p+1$.  Moreover, for each $M$ we have 
\begin{gather*}
  \omega\big(\bbL_M\left(\Phi_M\right)\big)
\leq
  \omega\big(\mathbb{D}(\Phi)\big)
\underset{(\ref{eq:action-of-D-operator})}{\leq}
  k+p+1
\end{gather*}
and hence $\omega\big(\mathbb{D}(\Phi)\big) = k+p+1$. 

Now assume that $p \mid (2k-g+1)$. Lemma~\ref{lem:existence-of-relatively-prime-polynomial} shows that there exists an $\Upsilon \in \rmM^{(g)}_{k'}(\ZZ_{(p)})$ with $2k'\geq g$ such that $\omega(\Upsilon)=k'$, $p \nisdiv\frac{(2k')!}{(2k'-g)!}$, and $P_{\Upsilon}$ is relatively prime to $B$, where $P_{\Upsilon}$ corresponds to $\Upsilon$ under the isomorphism in Theorem~\ref{thm:structure-of-siegel-mod-p-rings}.

Eholzer and Ibukiyama~\cite{Eho-Ibu} establish a Rankin-Cohen bracket of Siegel modular forms of degree $g$.  We employ a slight extension of a special case of~\cite{Eho-Ibu}, which is due to B\"ocherer and Nagaoka~\cite{Bo-Na-MathAnn07}. Let us introduce necessary notation: For $0\leq \alpha\leq g$ let $P_{\alpha}(R,R')$ be the polynomial defined by the equation
\begin{gather*}
  \det(R+\lambda R')
=
  \sum_{\alpha=0}^g
  P_{\alpha}(R,R')\, \lambda^{\alpha}
\text{,}
\end{gather*}
where the variables $R$ and $R'$ are symmetric $n\times n$ matrices.  Set
\begin{gather*}
\mathcal{Q}_{k,k'}^{(g)}(R,R'):=\sum_{\alpha=0}^g(-1)^{\alpha}\alpha!(g-\alpha)!{2k'-\alpha \choose g-\alpha}{2k-g+\alpha \choose \alpha}P_{\alpha}(R,R')
\text{}
\end{gather*}
and
\begin{gather*}
\widetilde{D}_{k,k'}^{(g)}:=(2\pi i)^{-g}\mathcal{Q}_{k,k'}^{(g)}(\partial_{Z_1}, \partial_{Z_2})
\text{.}
\end{gather*}
Corollary~2 of~\cite{Bo-Na-MathAnn07} asserts that  
\begin{gather*}
\{\Phi,\Upsilon\}(Z):=\widetilde{D}_{k,k'}^{(g)}(\Phi(Z_1),\Upsilon(Z_2))\big|_{Z_1=Z_2=Z}\in\rmM^{(g)}_{k+k'+2}(\ZZ_{(p)})
\text{,}
\end{gather*}
and a direct computation (observing that $p \mid (2k-g+1)$) shows that
\begin{gather*}
\{\Phi,\Upsilon\} \equiv \frac{(2k')!}{(2k'-g)!} \mathbb{D}(\Phi)\Upsilon \pmod{p}.
\end{gather*}

\noindent
If $\omega\big(\mathbb{D}(\Phi)\big)=k+p+1$, then there exists a $\Xi\in \rmM^{(g)}_{k+p+1}(\ZZ_{(p)})$ such that $\omega(\Xi)=k+p+1$ and $\mathbb{D}(\Phi)\equiv \Xi \pmod{p}$.  Let $P_{\Xi}$ be the polynomial corresponding to $\Xi$ under the isomorphism in Theorem~\ref{thm:structure-of-siegel-mod-p-rings}.  Note that $B$ does not divide $P_{\Xi}P_{\Upsilon}$, since $B$ does not divide $P_{\Xi}$ and $B$ is relatively prime to $P_{\Upsilon}$.  Lemma~\ref{la:MFp-integral-domain} shows that $\Xi\Upsilon\not\equiv0\pmod{p}$ and Corollary~\ref{cor:siegel-characterization-of-maximal-filtration} implies that $\omega\big(\Xi\Upsilon\big)=k'+k+p+1$.  We obtain the contradiction
\begin{equation*}
k'+k+2\geq \omega\big(\{\Phi,\Upsilon\}\big)=
\omega\big(\mathbb{D}(\Phi)\Upsilon\big)=\omega\big(\Xi\Upsilon\big)=k'+k+p+1.
\end{equation*}
Hence $\omega\big(\mathbb{D}(\Phi)\big)<k+p+1$, which completes the proof.
\end{proof}

\begin{proof}[Proof of Theorem~\ref{thm:Siegel-U_p}]
We apply Corollary~\ref{cor:siegel-weight-congruence} and Proposition~\ref{prop:filtration-of-D-operator} to study generalized theta cycles.  Theorem~\ref{thm:Siegel-U_p} follows then analogous to Theorem~\ref{thm:Jacobi-U_p}.
\end{proof}

\subsection{Example}
\label{sec:Siegelexample}

In this final section, we apply Theorem~\ref{thm:Siegel-U_p} to determine all $U(p)$ congruences of the Schottky form $J_4$.  Recall that $J_4$ is the unique Siegel cusp form of weight~$8$ and degree~$4$ with integral Fourier series coefficients, and normalized such that the content of its Fourier series coefficients is~$1$.  Theorem~\ref{thm:Siegel-U_p} implies that $J_4\,\big|\,U(p) \not\equiv 0\pmod{p}$ if $p > 9 = 2 \cdot 8 - 4 - 3$.  It remains to discuss the cases $p = 5$ and $p = 7$.  If $p = 5$, then $J_4\,\big|\,U(5) \not\equiv 0\pmod{5}$, since $c(J_4;T)=-1$, where
\begin{gather*}
 T= \begin{pmatrix}
  2 & 1 & 0 & 1 \\
  1 & 2 & 0 & 0 \\
  0 & 0 & 2 & 1 \\
  1 & 0 & 1 & 2
  \end{pmatrix}
\end{gather*}
with $\det T=5$ (see p. 218 of \cite{P-Y-Japan07}).

In order to show that $J_4$ has a $U(p)$ congruence if $p = 7$, we consider $J_4$ as the Duke-Imamo\u glu-Ikeda lift of the unique elliptic cusp form~$\delta$ (normalized by $c(\delta;\, 1)=1$) of weight~$\frac{13}{2}$ and level~$4$.  Note that $\delta$ has integral Fourier series coefficients $c(\delta;\, n)$, and~\cite{K-Z-Invent81} provides a table of~$c(\delta;\, n)$ with $n \leq 149$.  Kohnen's~\cite{Ko-MathAnn2002} formula for the Fourier series coefficients of Duke-Imamo\u glu-Ikeda lifts asserts that
\begin{gather*}
  c(J_4; T)
=
  \sum_{a \isdiv f_T} a^6 \phi(a; T)\,
  c\Big( \delta;\, D_{T, 0} \Big(\frac{f_T}{a}\Big)^2 \Big)
\text{,}
\end{gather*}
where $\phi(a, T) \in \ZZ$ is a local invariant of~$T$, and $\det(T) = D_{T, 0} f_T^2$ for  square free $D_{T, 0}$.  Observe that $c(\delta, n) \equiv 0 \pmod{7}$ whenever $7 \isdiv n$ implies that $c(J_4; T) \equiv 0 \pmod{7}$ whenever $7 \isdiv \det(T)$.  Thus, it suffices to verify that $\delta$ has a $U(7)$ congruence.

If $g=1$, then $\theta:=\mathbb{D}$ is the usual theta operator for elliptic modular forms.  We need to argue that $\delta \equiv \theta^6 \delta \pmod{7}$.  Note that $\theta^6 \delta$ is congruent to a modular form of weight $\frac{13}{2}+48$ and level $4$, and the Sturm bound is~$28$ (see for example Theorem~2.58 of~\cite{Ono}).  The table in~\cite{K-Z-Invent81} shows that the first~$28$ coefficients of~$\delta$ exhibit a $U(7)$ congruence, and we conclude that $J_4\,\big|\,U(7)\equiv 0\pmod{7}$.

\begin{remark}
Recall that the Schottky $J_4$ is the difference of the theta series of degree~$4$ attached to the unimodular lattices $E_8 \oplus E_8$ and $D_{16}$, respectively (see~\cite{Igusa-Schottky81}).  In this context, our example can be interpreted as follows:  For all primes~$p \ge 5$, $p \ne 7$, there exists at least one lattice~$\Lambda$ of dimension four whose discriminant is divisible by~$p$ and which does not occur with the same multiplicities in $E_8 \oplus E_8$ and $D_{16}$.  The case $p = 7$ is special.  The number of $4$\nbd dimensional sublattices $\Lambda$ in $E_8 \oplus E_8$ and $D_{16}$ with discriminant divisible by~$7$ coincides modulo~$7$.

Comparing elliptic theta series attached to these lattices shows that the number of vectors of length~$n$ in $E_8 \oplus E_8$ and $D_{16}$ is the same for all non\nbd negative integers~$n$.
\end{remark}

\bibliographystyle{plain}
\bibliography{Lit}

\end{document}